\theoremstyle{thmstyleone}%
\newtheorem{remark}{Remark}%
\newtheorem{proposition}{Proposition}
\newtheorem{theorem}{Theorem}
\newtheorem{definition}{Definition}
\newtheorem{lemma}{Lemma}
\newcommand{\be}{\begin{equation}}
\newcommand{\ee}{\end{equation}}
\renewcommand{\@thesubfigure}{\hskip\subfiglabelskip}
\title[Bilevel hyperparameter optimization for SVC]{{\huge\textmd{Bilevel hyperparameter optimization for support vector classification: theoretical analysis and a solution method}}}
\begin{document}


\author[1]{\fnm{Qingna} \sur{Li}}\email{qnl@bit.edu.cn}
\author[2]{\fnm{Zhen} \sur{Li}}\email{lizhenbeili@126.com}
\author*[3]{\fnm{Alain} \sur{Zemkoho}}\email{a.b.zemkoho@soton.ac.uk}

{\tiny{
\affil[1]{\orgdiv{School of Mathematics and Statistics}, \orgname{Beijing Key Laboratory on MCAACI/Key Laboratory of Mathematical Theory and Computation in Information Security, Beijing Institute of Technology}, \orgaddress{\city{Beijing}, \postcode{100081}, \country{P. R. China}}}
\affil[2]{\orgdiv{School of Mathematics and Statistics}, \orgname{Beijing Institute of Technology}, \orgaddress{\city{Beijing}, \postcode{100081}, \country{P. R. China}}}
\affil*[3]{\orgdiv{School of Mathematical Sciences}, \orgname{University of Southampton}, \orgaddress{\city{Southampton}, \postcode{SO17 1BJ}, \country{United Kingdom}}}
}}


\abstract{Support vector classification (SVC) is a classical and well-performed learning method for classification problems. A regularization parameter, which significantly affects the classification performance, has to be chosen and this is usually done by the cross-validation procedure. In this paper, we reformulate the  {hyperparameter} selection problem for support vector classification as a bilevel optimization problem in which the upper-level problem minimizes the average number of misclassified data points over all the cross-validation folds, and the lower-level problems are the $l_1$-loss SVC problems, with each one for each fold in T-fold cross-validation. The resulting bilevel  {optimization} model is then converted to a mathematical program with equilibrium constraints (MPEC). To solve this MPEC, we propose a global relaxation cross-validation algorithm (GR-CV) based on the well-know Sholtes-type global relaxation method (GRM). It is proven to converge to a C-stationary point. Moreover, we prove that the MPEC-tailored version of the Mangasarian-Fromovitz constraint qualification (MFCQ), which is a key property to guarantee the convergence of  {the GRM},  automatically holds at each feasible point of this MPEC. Extensive numerical results verify the efficiency of the proposed approach. In particular, compared with other methods, our algorithm enjoys superior generalization performance over almost all the data sets used in this paper.}

\keywords{Support vector classification,  {Hyperparameter} selection, Bilevel optimization, Mathematical program with equilibrium constraints,  C-stationarity}


\maketitle
\section{Introduction}
Support vector classification (SVC) is a classical and widely used learning method for classification problems; see, e.g.,  \cite{cortes1995support,Chauhan2019problem,Vapnik2013nature}. In SVC, the selection of hyperparameters, also known as  {hyperparameter} selection, is a critical issue and has been addressed by many researchers both theoretically and practically \cite{chapelle2002choosing,duan2003evaluation,Keerthi2007Efficient,kunapuli2008bilevel1,couellan2015bi,kunapuli2008bilevel,kunapuli2008classification}. While there have been many interesting attempts to use bounds, gradient descent methods or other techniques to identify these hyperparameters \cite{chapelle2002choosing,duan2003evaluation,Keerthi2007Efficient}, one of the most widely used methods is cross-validation (CV). A classical approach for cross-validation is the grid search method \cite{momma2002pattern}, where one needs to define a grid over the hyperparameters of interest, and search for the combination of  {hyperparameters} that minimize the cross-validation error (CV error). Bennett et al. \cite{bennett2006model} emphasize that one of the drawbacks of the grid search approach is that the continuity of the hyperparameter is ignored by the discretization. A formulation of the bilevel optimization model is proposed to choose hyperparameters \cite{bennett2006model,kunapuli2008bilevel1}. Below, we will focus on the bilevel optimization approach which is the most relevant to our work. We refer to \cite{Yu2020hyper,Luo2016review} for a survey of various hyperparameters optimization methods and applications.

In terms of selecting hyperparameters through bilevel optimization, different models and approaches have been considered in the literature. For example, Okuno et al. \cite{okuno2018hyperparameter} propose a bilevel optimization model to select the best hyperparameter for a nonsmooth, possibly nonconvex, $l_{p}$-regularized problem. They then present a  smoothing-type algorithm with convergence analysis to solve this bilevel optimization model. Kunisch and Pock \cite{kunisch2013bilevel} formulate a parameter learning problem for variational image denoising model into a bilevel optimization problem. They {design} a semismooth Newton's method for solving the resulting nonsmooth bilevel optimization problems. Moore et al. \cite{mooregradient} develop an implicit gradient-type algorithm for selecting hyperparameters for linear SVM-type machine learning models which are expressed as bilevel optimization problems. Moore et al. \cite{moore2009nonsmooth} propose a nonsmooth bilevel model to select hyperparameters for support vector regression (SVR) via T-fold cross-validation. They design a proximity control approximation algorithm to solve this bilevel optimization model. Couellan et al. \cite{couellan2015bi} design a bilevel stochastic gradient algorithm for training large scale SVM with automatic selection of the hyperparameter.
We refer to  \cite{colson2007overview,dempe2002foundations, dempebilevelbook} for recent general surveys on bilevel optimization, as well as \cite{mejia2019metaheuristic,zemkoho2021theoretical,fischer2019semismooth,lin2014solving,ye2010new,ochs2016techniques,ochs2015bilevel} for some of the latest algorithms on the subject. Next, we provide a brief overview of the MPEC reformulation of the bilevel optimization problem, which will play a fundamental role in this paper.

For a bilevel program, replacing the lower-level problem by its Karush-Kuhn-Tucker (KKT) conditions will result in a mathematical program with equilibrium constraints (MPEC) \cite{luo1996mathematical}. Therefore, various algorithms for MPECs can be potentially applied to solve bilevel optimization problems, although one might want to pay attention to the fact that both problems are not necessarily equivalent. Bennett and her collaborators did a series of works \cite{bennett2006model,kunapuli2008classification,bennett2008bilevel,kunapuli2008bilevel,kunapuli2008bilevel1} on hyperparameter selection by reformulating a bilevel program into an MPEC. For example, \cite{kunapuli2008classification} considers a bilevel optimization model for selecting many hyperparameters for $l_{1}$-loss SVC problems, in which the upper-level problem has box constraints for the regularization parameter and feature selection. They {reformulate} this bilevel program into an MPEC and  {solve} it by the inexact cross-validation method. Other methods include Newton-type algorithms \cite{wu2015inexact,harder2021reformulation,lee2015global}.

Considering these works, a natural question is whether one can build up a bilevel hyperparameter selection for SVC? If yes, whether there are some special and hidden properties if we transfer the corresponding bilevel optimization problem to its corresponding MPEC and how we can solve it efficiently? This is the main motivation the work in this paper.

In this paper, we consider a bilevel optimization model for selecting the hyperparameter in SVC. This regularization  {hyperparameter} $C$ is selected to minimize the T-fold cross-validated estimation of the out-of-sample misclassification error, which is basically a $0$-$1$ loss function. Therefore, the upper-level problem minimizes the average misclassification error in T-fold cross-validation based on the optimal solution of the lower-level problem (we use the typical $l_{1}$-loss SVC model) for all the possible values of the hyperparameter $C$. There are several challenges  {to design efficient algorithms for such potentially large-scale bilevel programs}. Firstly, the objective function in the upper-level problem is a $0$-$1$ loss function, which is discontinuous and nonconvex. Secondly, the constraints for the upper-level problem involve the optimal solution set of the lower-level problem, i.e., the $l_{1}$-loss SVC optimization model, for which the optimal solution is not explicitly given. To deal with the first challenge, we reformulate the minimization of the $0$-$1$ loss function into a linear optimization problem inspired by the technique in \cite{mangasarian1994misclassification}. We then replace the lower-level problem by its optimality conditions to tackle the second challenge. This therefore leads to an MPEC.

The contributions of the paper are as follows. Firstly, we propose a bilevel optimization model for hyperparameter selection in a binary SVC and study its reformulation as an MPEC.
Secondly, we apply the  {GRM} originating from \cite{scholtes2001convergence} to solve this MPEC, which is shown to converge to a C-stationary point. The resulting algorithm is called the  {GR-CV},  {which is a concrete implementation of the
GRM for selecting the hyperparameter $C$ in SVC.} Thirdly, we prove the MPEC-Mangasarian-Fromovitz constraint qualification (MPEC-MFCQ, for short) property for each feasible point of our MPEC. The MPEC-MFCQ is a key property to guarantee the convergence of the GRM. We show that it automatically holds for our problem thanks to its special structure. Finally, we conduct extensive numerical experiments, which show that our method is very efficient; in particular, it enjoys superior generalization performance over almost all the data sets used in this paper.

The paper is organized as follows. In Section \ref{sec2}, based on T-fold cross-validation for SVC, we introduce a bilevel optimization model to select an optimal hyperparameter for SVC. We also analyze the interesting properties of the lower-level problem. In Section \ref{sec3}, we reformulate the bilevel optimization problem as an MPEC (also known as the KKT reformulation), and apply the GRM for solving the MPEC. In Section \ref{sec4}, we prove that every feasible point of this MPEC satisfies the regularity condition MPEC-MFCQ, which is  a key property to guarantee the convergence of the GRM. In Section \ref{sec5}, we present some computational experiments comparing the { {resulting GR-CV}}  {based on the GRM} with two other ones, which have been used in the literature for a similar purpose; i.e., the inexact cross-validation method (In-CV) and the grid search method (G-S). We conclude the paper in Section \ref{sec6}.\\

\noindent {\bf Notations.} {For $x \in  \mathbb{R}^{n}$, $\|x \|_{0}$ denotes the number of nonzero elements in $x$, while $\| x \|_{1}$ and $\| x \|_{2}$ correspond to the $l_{1}$-norm and $l_{2}$-norm of $x$, respectively.
{ {Also, we will use $x_{+}=((x_{1})_{+},\ \cdots,\ (x_{n})_{+}) \in \mathbb{R}^{n}, $ where
$(x_{i})_{+}=\max(x_{i},\ 0).\ \mid \! \Omega \! \mid $}} denotes the number of elements in the set $\Omega \subset \mathbb{R}^n$.  {We use $\mathbf{1}_{k}$ to denote a vector with elements all ones in $\mathbb{R}^{k}$. $I_{k}$ is the identity matrix in $\mathbb{R}^{k \times k}$, while $e^{k}_{\gamma}$ is the $\gamma$-th row vector of an identity matrix in $\mathbb{R}^{k \times k}$.} The notation $\mathbf{0}_{k \times q}$ represents a zero matrix in $\mathbb{R}^{k \times q}$ and $\mathbf{0}_{k}$ stands for a zero vector in $\mathbb{R}^{k}$. On the other hand, $\mathbf{0}_{(\tau,\ \kappa)}$ will be used for a submatrix of the zero matrix, where $\tau$ is the index set of the rows and $\kappa$ is the index set of the columns.  Similarly to the case of zero matrix, $I_{(\tau,\ \tau)}$ corresponds to a submatrix of an identity matrix indexed by both rows and columns in the set $\tau$. Finally, $\Theta_{(\tau,\ \cdot)}$ represents a submatrix of the matrix $\Theta$, where $\tau$ is the index set of the rows, and $x_{\tau}$ is a subvector of the vector $x$ corresponding to the index set $\tau$.
\section{Bilevel hyperparameter optimization for SVC}\label{sec2}
We start this section by first introducing the problem settings in relation to the T-fold cross-validation for SVC. Subsequently, we present the lower-level problem with some interesting and relevant properties for further analysis in the later parts of the paper. Finally, we introduce the upper-level problem, that is, the  {bilevel optimization model for hyperparameter} selection in SVC.

\subsection{T-fold cross-validation for SVC}
As discussed in the introduction, the most commonly used method for selecting the hyperparameter $C$ is $T$-fold cross-validation. In $T$-fold cross-validation, the data set is split into a subset $\Omega$ with $l_{1}$ points, which is used for cross-validation, and a hold-out test set $\Theta$ with $l_{2}$ points. Here, $\Omega=\{(x_{i},y_{i})\}_{i=1}^{l_{1}} \in \mathbb{R}^{n+1}$, where $x_{i} \in \mathbb{R}^{n}$ denotes a data point and $y_{i}\in \{\pm 1\}$ the corresponding label. For T-fold cross-validation, $\Omega$ is equally partitioned into $T$ pairwise disjoint subsets, one for each fold. The process is executed T iterations. For the $t$-th  iteration ($t=1, \ldots, T$), the $t$-th fold is the validation set $\Omega_{t}$, and the remaining $T-1$ folds make up the training set $\overline{\Omega}_{t}=\Omega \backslash \Omega_{t}$. Therefore, in the $t$-th iteration, the separating hyperplane is trained using the training set $\overline{\Omega}_{t}$, and the validation error is computed on the validation set $\Omega_{t}$.

 Then, the cross-validation error (CV error) is the average of the validation error over all the $T$ iterations. The value of $C$ that gives the best CV error will be selected. Finally, the final classifier is trained using all the data in $\Omega$ and the rescaled optimal $C$. The test error is computed on the test set $\Theta$. Note that the CV error and the test error are the evaluation indices for the classification performance in T-fold cross-validation. As shown in Figure \ref{figCV}, for three-fold cross-validation, the yellow part is the subset $\Omega$ which is used for three-fold cross-validation. In the first iteration, the blue part is the validation set $\Omega_{1}$, and the remaining two folds are the training set $\overline{\Omega}_{1}$. The second and third iterations have similar meanings.
\begin{figure}[h]
	\centering
	\includegraphics[width=0.55\textwidth]{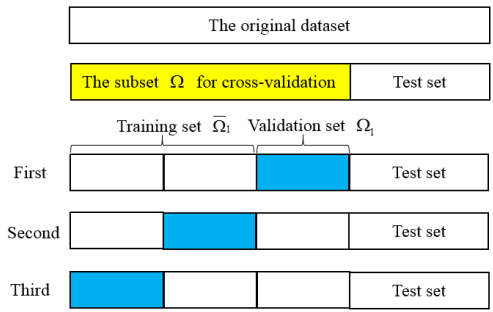}
	\caption{Three-fold cross-validation.}\label{figCV}
\end{figure}

Let $m_{1}$ be the size of the validation set $\Omega_{t}$ and $m_{2}$ the size of the training set $\overline{\Omega}_{t}$. The corresponding index sets for the validation and training sets are $\mathcal{N}_{t}$ and $\overline{\mathcal{N}}_{t}$, respectively. In T-fold cross-validation, there are $T$ validation sets. Therefore, there are totally $T m_{1}$ validation points in T-fold cross-validation. We use the index set
\be \label{eqb1}
Q_{u}:=\{i\ \mid \ i=1,\ 2,\ \cdots,\ Tm_{1}\}
\ee
to represent all the validation points in T-fold cross-validation. Similarly, there are totally $T m_{2}$ training points in T-fold cross-validation. We use the index set
\be \label{eqb2}
Q_{l}:=\{i\ \mid \ i=1,\ 2,\ \cdots,\ Tm_{2}\}
\ee
to represent all the training points in T-fold cross-validation. These two index sets will be used later.

Before analyzing different cases of the data points in the training set and the validation set, we use Figure \ref{figSVM1} to show geometric relationships of different cases in soft-margin support vector classification (without bias term) \cite{cristianini2000introduction,galli2021study}. Specifically, we  consider an $l_1$-loss SVC model as the lower-level problem.

For a sample $(x_{i},y_{i})$, the point $x_{i}$ is referred to as a positive point  {if $y_{i}=1$}; the point $x_{i}$ is referred to as a negative point if $y_{i}=-1$. In Figure \ref{figSVM1}, the plus signs `$+$' are the positive points (i.e., $y_i=1$) and the minus signs `$-$' are the negative ones (i.e., $y_i=-1$). The distance between the hyperplanes $H_{1}: w^{\top} x=1$ and $H_{2}: w^{\top} x=-1$ is called \emph{margin}. The \emph{separating hyperplane} $H$ lies between $H_{1}$ and $H_{2}$. Clearly, the hyperplanes $H_{1}$ and $H_{2}$ are the boundaries of the margin. Therefore, if a positive point lies on the hyperplane $H_{1}$ or a negative point lies on the hyperplane $H_{2}$, we call it lying on the boundary of the margin (indicated by `\ding{172}' in Figure \ref{figSVM1}). If a positive point lies between the separating hyperplane $H$ and the hyperplane $H_{1}$, or a negative point lies between the separating hyperplane $H$ and the hyperplane $H_{2}$, we call it lying between the separating hyperplane $H$ and the boundary of the margin (indicated by `\ding{173}' in Figure \ref{figSVM1}). Similarly, if a positive point lies on the correctly classified side of the hyperplane $H_{1}$, or a negative point lies on the correctly classified side of the hyperplane $H_{2}$, we call it lying on the correctly classified side of the boundary of the margin (indicated by `\ding{174}' in Figure \ref{figSVM1}).

\begin{figure}[h]
	\centering
	\includegraphics[width=0.40\textwidth]{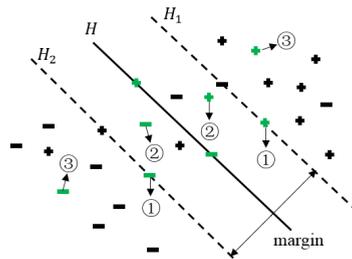}\\
	\caption{Training points in soft-margin support vector machine.}\label{figSVM1}
\end{figure}
Based on Figure \ref{figSVM1}, we have the following observations which address different cases for the data points in the training set.
\begin{proposition} \label{pro6}
Let $w^{t}$ be an optimal solution of the $t$-th lower-level problem ($l_1$-loss SVC model). For $i \in \overline{\mathcal{N}}_{t}$, consider a positive point $x_i$. Then it holds that:
\begin{itemize}
\item [{\rm{(a)}}] $x_i$ satisfies $(w^{t})^{\top} x_{i}<0$ if and only if it lies on the misclassified side of the separating hyperplane $H$, and is therefore misclassified.
\item [{\rm{(b)}}]$x_i$ satisfies $(w^{t})^{\top} x_{i}=0$ if and only if it lies on the separating hyperplane $H$, and is therefore correctly classified.
\item [{\rm{(c)}}] $x_i$ satisfies $0<(w^{t})^{\top} x_{i}<1$ if and only if it lies between the separating hyperplane $H$ and the boundary of the margin; hence, it is correctly classified.
\item [{\rm{(d)}}] $x_i$ satisfies $(w^{t})^{\top} x_{i}=1$ if and only if it lies on the boundary of the margin, and is therefore correctly classified.
\item [{\rm{(e)}}] $x_i$ satisfies $(w^{t})^{\top} x_{i}>1$ if and only if it lies on the correctly classified side of the boundary of the margin, and is therefore correctly classified.
\end{itemize}
\end{proposition}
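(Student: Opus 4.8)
The plan is to prove Proposition~\ref{pro6} by a direct translation of the geometric configurations depicted in Figure~\ref{figSVM1} into algebraic conditions on the scalar $(w^{t})^{\top}x_{i}$, combined with the SVC decision rule $x\mapsto\mathrm{sgn}\big((w^{t})^{\top}x\big)$ for a model without bias term. First I would record the underlying conventions: the separating hyperplane is $H=\{x:(w^{t})^{\top}x=0\}$ and the two boundaries of the margin are $H_{1}=\{x:(w^{t})^{\top}x=1\}$ and $H_{2}=\{x:(w^{t})^{\top}x=-1\}$; here $w^{t}\neq\mathbf{0}_{n}$, so that these are genuine hyperplanes. For a positive point ($y_{i}=1$), the decision rule assigns the predicted label $+1$ exactly when $(w^{t})^{\top}x_{i}\geq 0$, with the boundary value $(w^{t})^{\top}x_{i}=0$ counted as correctly classified under the convention $\mathrm{sgn}(0)=1$ adopted in the paper, and it misclassifies $x_{i}$ exactly when $(w^{t})^{\top}x_{i}<0$. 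This already fixes the classification status (``misclassified'' versus ``correctly classified'') asserted in each of (a)--(e).

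Next I would handle the geometric location. The value $(w^{t})^{\top}x_{i}$ lies in exactly one of the five pairwise disjoint sets $(-\infty,0)$, $\{0\}$, $(0,1)$, $\{1\}$, $(1,\infty)$, whose union is all of $\mathbb{R}$. By the definitions recalled above, these correspond, respectively, to: lying on the misclassified side of $H$; lying on $H$; lying strictly between $H$ and $H_{1}$ on the side where $(w^{t})^{\top}x>0$, i.e.\ configuration \ding{173}; lying on $H_{1}$, i.e.\ configuration \ding{172}; and lying strictly beyond $H_{1}$ on the correctly classified side, i.e.\ configuration \ding{174}. Matching intervals to configurations gives the forward implication in each of (a)--(e); since the five value ranges partition $\mathbb{R}$ and the five described positions are mutually exclusive, each converse implication follows automatically, upgrading every statement to the claimed equivalence.

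I do not anticipate a substantive obstacle: the result is essentially a careful bookkeeping of the definitions introduced around Figure~\ref{figSVM1}. The only points that require attention are (i) making the sign convention at $(w^{t})^{\top}x_{i}=0$ explicit and consistent with the paper's notion of correct classification, so that item~(b) reads as stated; (ii) recording that $w^{t}\neq\mathbf{0}_{n}$, without which $H$, $H_{1}$, $H_{2}$ would be ill defined; and (iii) verifying that the five value ranges are exhaustive, which is exactly what turns each one-directional implication into an ``if and only if''. Note that optimality of $w^{t}$ in the $t$-th lower-level problem plays no role in the argument beyond (possibly) guaranteeing $w^{t}\neq\mathbf{0}_{n}$; the characterization is a property of the hyperplane $H$ itself.
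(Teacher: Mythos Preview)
Your proposal is correct and is essentially the same as the paper's (implicit) argument: the paper does not give a formal proof of this proposition at all, presenting it simply as an ``observation'' read off from Figure~\ref{figSVM1} and the definitions of $H$, $H_{1}$, $H_{2}$ given just before it. Your write-up is precisely the natural formalization of that reading, and the care you take with the sign convention at $0$, the nondegeneracy $w^{t}\neq\mathbf{0}_{n}$, and the exhaustive partition of $\mathbb{R}$ into five ranges is appropriate and sufficient.
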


A result analogous to Proposition \ref{pro6} can be stated for the negative points. In Figure \ref{fig_train}, any point $x_{i} \in \overline{\Omega}_{t}$ in blue is a training point in each case (notation
is the same as in Figure \ref{fig_lower}).
\begin{figure}[htbp]
\centering
\subfigure[(a) $(w^{t})^{\top} x_{i}<0.$]{
\includegraphics[width=0.30\textwidth]{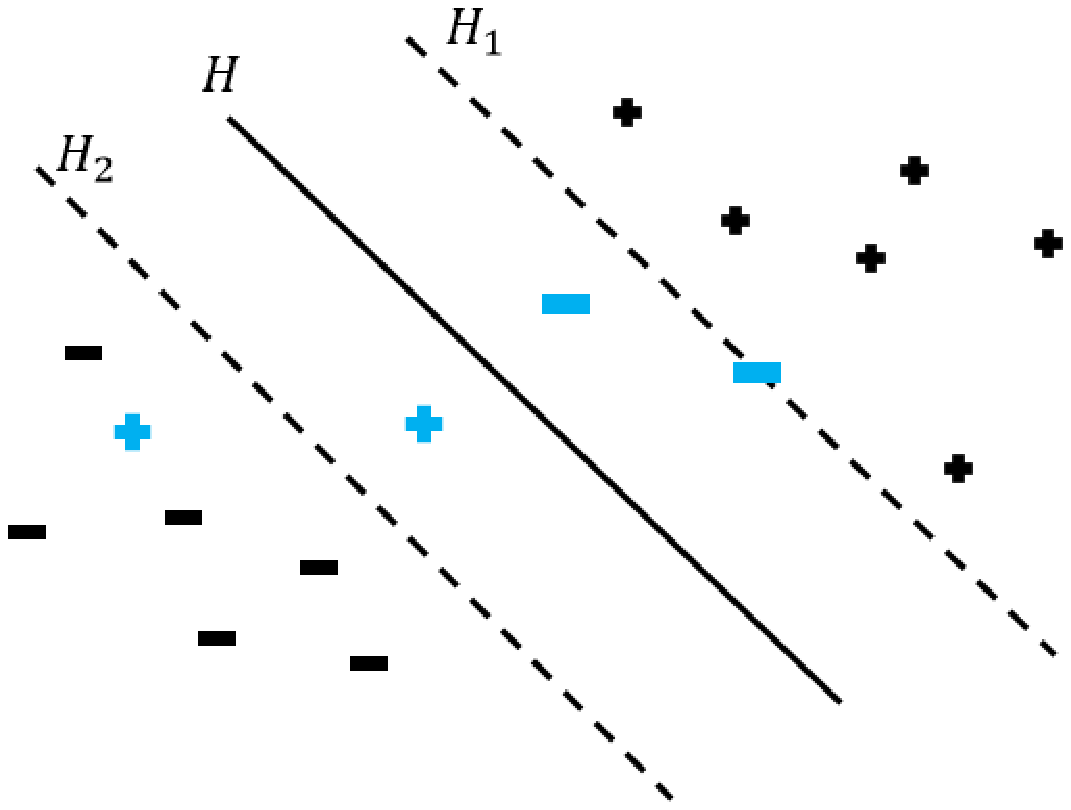}
}
\
\subfigure[(b) $(w^{t})^{\top} x_{i}=0.$]{
\includegraphics[width=0.30\textwidth]{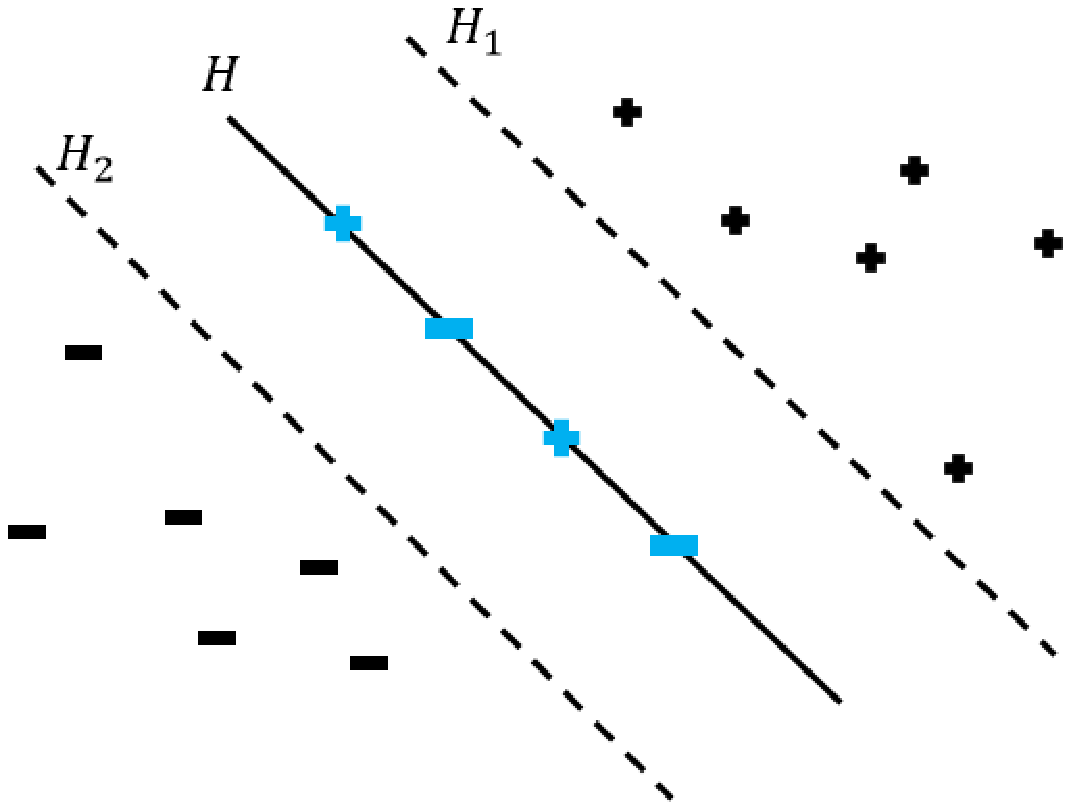}
}
\
\subfigure[(c) $0<(w^{t})^{\top} x_{i}<1.$]{
\includegraphics[width=0.30\textwidth]{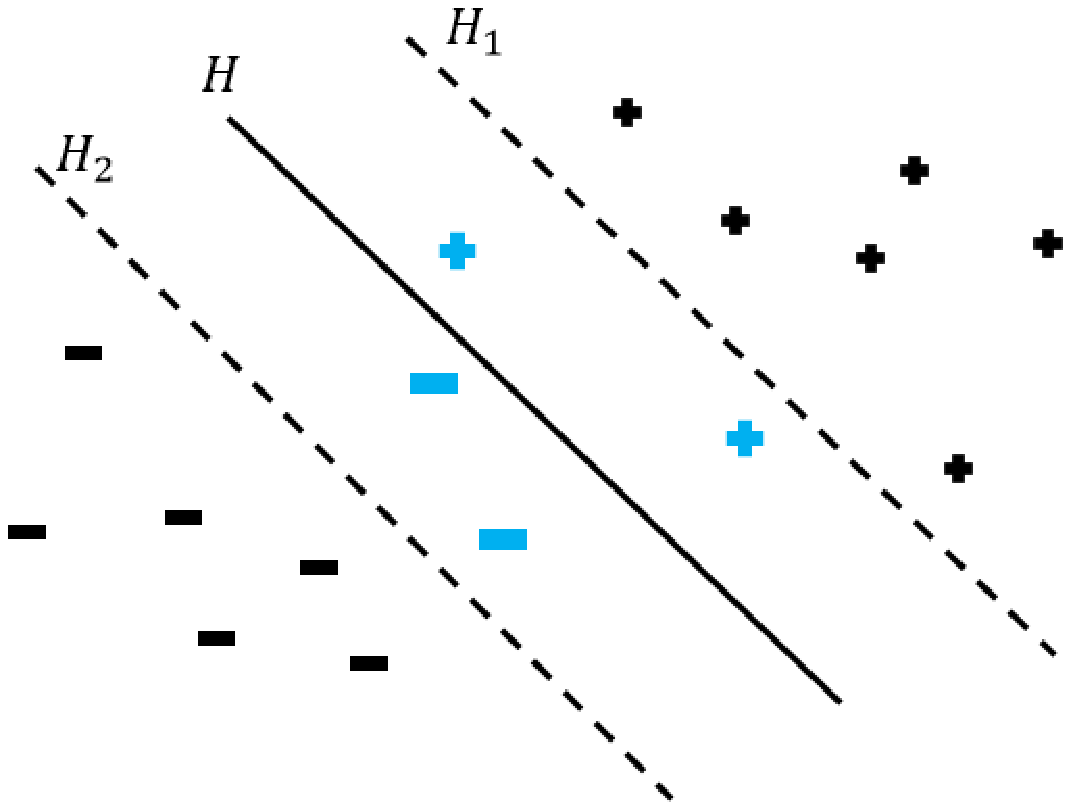}
}
\
\subfigure[(d) $(w^{t})^{\top} x_{i}=1.$]{
\includegraphics[width=0.30\textwidth]{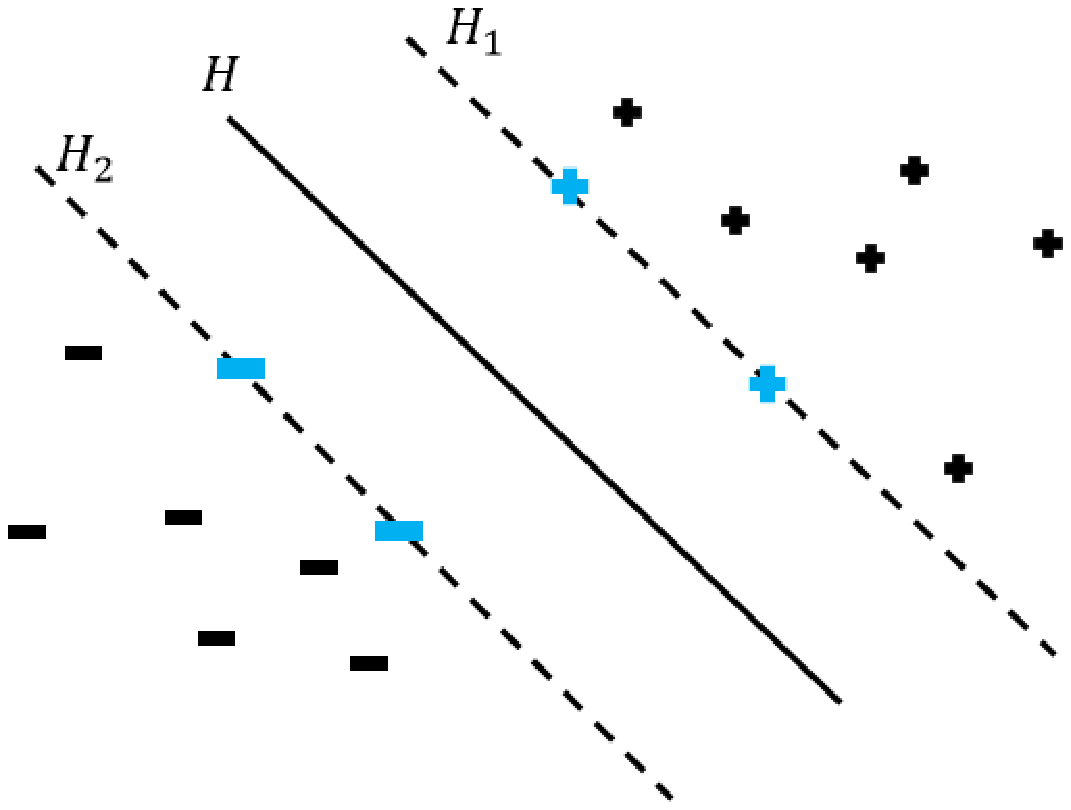}
}
\
\subfigure[(e) $(w^{t})^{\top} x_{i}>1.$]{
\includegraphics[width=0.30\textwidth]{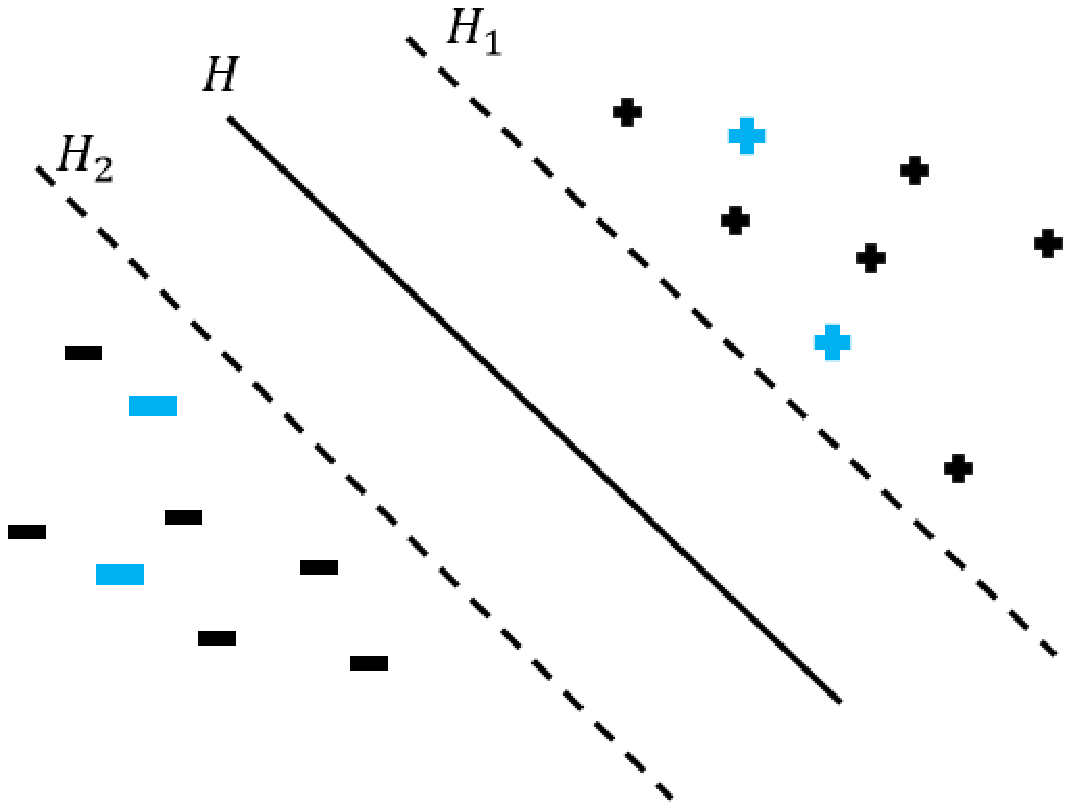}
}
\caption{Each case for different values of $(w^{t})^{\top} x_{i}$ in the training set.}\label{fig_train}
\end{figure}

As for data points in the validation set, we have the following scenarios.
\begin{proposition} \label{pro8}
Let $w^{t}$ be an optimal solution of the $t$-th lower-level problem. For $i \in \mathcal{N}_{t}$, consider a positive point $x_i$. Then it holds that:
\begin{itemize}
\item [{\rm (a)}] $x_i$ satisfies $(w^{t})^{\top} x_{i}<0$ if and only if it lies on the misclassified side of the separating hyperplane $H$, and is therefore misclassified.
\item [{\rm (b)}] $x_i$ satisfies $(w^{t})^{\top} x_{i}=0$ if and only if it lies on the separating hyperplane $H$, and is therefore correctly classified.
\item [{\rm (c)}] $x_i$ satisfies $(w^{t})^{\top} x_{i}>0$ if and only if it lies  on the correctly classified side of the separating hyperplane $H$, and it is hence correctly classified.
\end{itemize}
\end{proposition}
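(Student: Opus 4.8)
The plan is to read off each of the three equivalences directly from the geometry of the $l_1$-loss SVC model and from the definition of the trained classifier, in the same spirit as the argument for the training points in Proposition~\ref{pro6}, but now exploiting the fact that, for a point in the validation set, the margin plays no role. Recall that the lower-level problem carries no bias term, so the hyperplane trained on $\overline{\Omega}_{t}$ is $H=\{x\in\mathbb{R}^{n}:(w^{t})^{\top}x=0\}$ and the induced classifier is $x\mapsto\mathrm{sign}\big((w^{t})^{\top}x\big)$. For a positive point $x_{i}$ (that is, $y_{i}=1$), being correctly classified means $\mathrm{sign}\big((w^{t})^{\top}x_{i}\big)=y_{i}=1$, and being misclassified means the opposite sign occurs; moreover the three alternatives $(w^{t})^{\top}x_{i}<0$, $(w^{t})^{\top}x_{i}=0$ and $(w^{t})^{\top}x_{i}>0$ are mutually exclusive and exhaustive.

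First I would handle (a): by definition $x_{i}$ lies on the misclassified side of $H$ exactly when $(w^{t})^{\top}x_{i}$ has the sign opposite to $y_{i}=1$, i.e. $(w^{t})^{\top}x_{i}<0$, and then $\mathrm{sign}\big((w^{t})^{\top}x_{i}\big)=-1\ne y_{i}$, so $x_{i}$ is indeed misclassified. For (b), $x_{i}$ lies on $H$ if and only if $(w^{t})^{\top}x_{i}=0$ by the very definition of $H$, and such a point is declared correctly classified by the same convention already used in case (b) of Proposition~\ref{pro6}. For (c), $x_{i}$ lies on the correctly classified side of $H$ if and only if $(w^{t})^{\top}x_{i}$ has the same sign as $y_{i}=1$, i.e. $(w^{t})^{\top}x_{i}>0$, in which case $\mathrm{sign}\big((w^{t})^{\top}x_{i}\big)=1=y_{i}$, so $x_{i}$ is correctly classified. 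Since the three conditions partition all possibilities, each ``if and only if'' follows. The analogous statement for a negative point ($y_{i}=-1$) is obtained verbatim after replacing $(w^{t})^{\top}x_{i}$ by $-(w^{t})^{\top}x_{i}$, equivalently by reflecting Figure~\ref{figSVM1}.

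I expect no real obstacle here; the proof is essentially a bookkeeping exercise with the definitions. The one point worth spelling out is why this list is strictly shorter than the one in Proposition~\ref{pro6}: the validation set $\Omega_{t}$ is not used to compute $w^{t}$, so the margin boundaries $H_{1}:(w^{t})^{\top}x=1$ and $H_{2}:(w^{t})^{\top}x=-1$ are irrelevant for deciding whether a validation point is classified correctly — only the side of $H$ on which $x_{i}$ falls matters. Consequently, the three cases (c)--(e) of Proposition~\ref{pro6}, which record the position of a \emph{training} point relative to the margin, collapse into the single case (c) above, and the optimality of $w^{t}$ is not needed in the argument beyond ensuring that $w^{t}$, hence $H$, is well defined. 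The only subtlety to flag is the convention adopted for a point lying exactly on $H$ in case (b), which we take to be the same one used throughout Figure~\ref{figSVM1} and Proposition~\ref{pro6}.
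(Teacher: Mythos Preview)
Your proposal is correct and matches the paper's treatment: the paper in fact states Proposition~\ref{pro8} without proof, treating each equivalence as immediate from the geometry of the bias-free separating hyperplane $H:(w^{t})^{\top}x=0$ in Figure~\ref{figSVM1}, and your argument simply spells out that geometry. The one remark you add---that the margin hyperplanes $H_{1},H_{2}$ are irrelevant for validation points, which is why the five cases of Proposition~\ref{pro6} collapse to three---is a helpful clarification the paper leaves implicit.
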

A result analogous to Proposition \ref{pro8} can be stated for the negative points. In Figure \ref{fig_validation}, any point $x_{i} \in \Omega_{t}$ in blue is a validation point in each case (notation is the same as in Figure \ref{fig_upper}). Note that Propositions \ref{pro6} and \ref{pro8} will be used in the proof of Propositions \ref{pro7} and \ref{pro9}.
\begin{figure}[htbp]
\centering
\subfigure[(a) $(w^{t})^{\top} x_{i}<0.$]{
\includegraphics[width=0.25\textwidth]{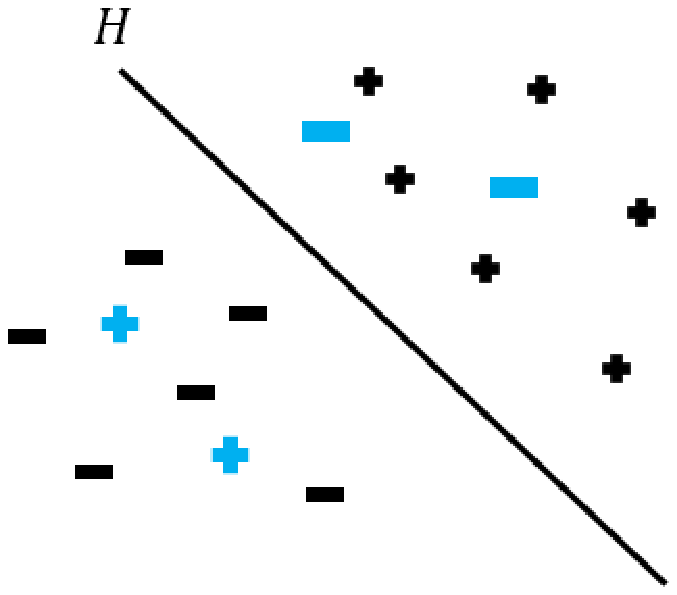}
}
\
\subfigure[(b) $(w^{t})^{\top} x_{i}=0.$]{
\includegraphics[width=0.25\textwidth]{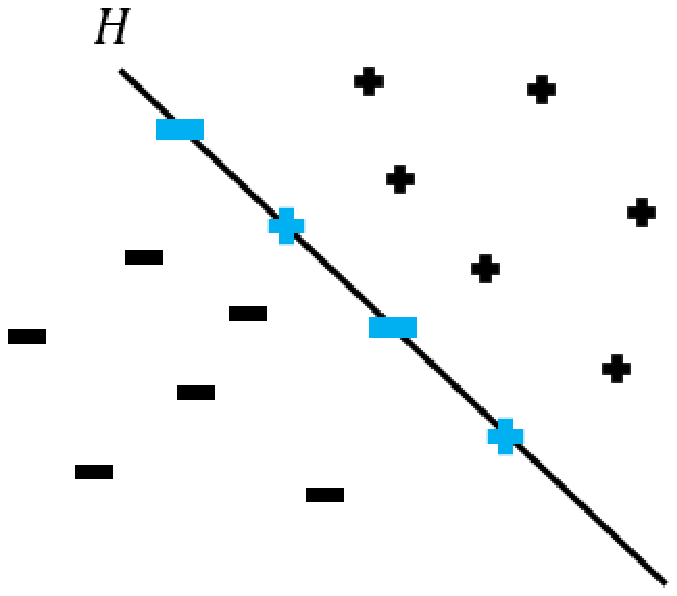}
}
\
\subfigure[(c) $(w^{t})^{\top} x_{i}>0.$]{
\includegraphics[width=0.25\textwidth]{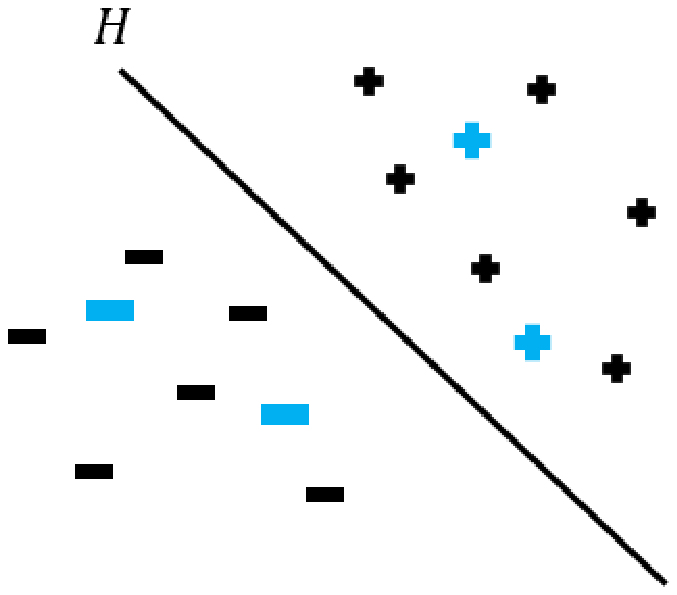}
}
\caption{Each case for different values of $(w^t)^{\top} x_{i}$ in the validation set.}\label{fig_validation}
\end{figure}

\subsection{The lower-level problem}\label{sec2.2}

In this part, we focus on the lower-level problem. That is, given  {hyperparameter} $C$ and the training set $\overline{\Omega}_{t}$, we train the dataset via $l_{1}$-loss SVC model. We will also discuss the properties of the lower-level problem.

\subsubsection{The training model: $l_{1}$-loss SVC}
In T-fold cross-validation, there are T lower-level problems. In the $t$-th lower-level problem, we train the $t$-th fold training data set $\overline{\Omega}_{t}$ by the $l_{1}$-loss SVC model without bias term \cite{galli2021study,mangasarian1994misclassification,hsieh2008dual}. That is, given $C$, we solve the following optimization problem:
$$
  \underset{ w^{t} \in \mathbb{R}^{n}}{\operatorname{min}} \frac{1}{2}\|w^{t}\|_{2}^{2}+ C \sum_{i \in \overline{\mathcal{N}}_{t}} (\mathbf{1}-y_{i}(x_{i}^{\top}w^{t}))_{+}.
$$
A popular reformulation of  {the problem above} is the convex quadratic optimization problem obtained by introducing slack variables $\xi^{t} \in \mathbb{R}^{m_{2}}$:
\be \label{eq2}
\begin{array}{l}
\underset{w^{t} \in \mathbf{R}^{n}, \xi^{t} \in \mathbf{R}^{m_{2}}}\min \quad  \frac{1}{2}\left\|w^{t}\right\|_{2}^{2}+C\sum \limits_{i=1}^{m_{2}} \xi^{t}_{i} \\
\quad \quad \ \hbox{s.t.} \quad \quad \quad
\ B^{t} w^{t} \geq \mathbf{1}-\xi^{t}, \\
\quad \quad \quad \quad \quad \quad \quad \xi^{t} \geq \mathbf{0},
\end{array}
\ee
where, for $t=1, \cdots, T$ and $k=m_{1}+ 1, \cdots, l_{1}$, we have
\[
B^{t}\!=\! \left[\begin{array}{c}
y_{t_{m_{1}+1}} x_{t_{m_{1}+1}}^{\top} \\
\vdots \\
y_{t_{l_{1}}} x_{t_{l_{1}}}^{\top}
\end{array}\right] \! \in \! \mathbb{R}^{m_{2} \times n}, \;\;\, (x_{t_k},y_{t_k}) \in  \overline{\Omega}_{t},
\]
and we use $\xi^{t}_{i}$ to denote the $i$-th element of $\xi^{t} \in \mathbb{R}^{m_{2}}$.

Let $\alpha^{t} \in \mathbb{R}^{m_{2}}$ and $\mu^{t} \in \mathbb{R}^{m_{2}}$ be the multipliers of the constraints in (\ref{eq2}). We can write the KKT conditions for the lower-level problem (\ref{eq2}) as
\begin{subequations} \label{eq_lowerKKT}
\begin{align}
 & \mathbf{0} \leq \alpha^{t} \perp  B^{t} w^{t}-\mathbf{1}+\xi^{t} \geq \mathbf{0}, \label{eq_lowerKKTa} \\ & \mathbf{0} \leq \xi^{t} \perp  \mu^{t} \geq \mathbf{0}, \label{eq_lowerKKTb}  \\
&w^{t}-(B^{t})^{\top} \alpha^{t}=\mathbf{0}, \label{eq_lowerKKTc}\\
& C\mathbf{1}-\alpha^{t}-\mu^{t}=\mathbf{0}, \label{eq_lowerKKTd}
\end{align}
\end{subequations}
where for two vectors $a$ and $b$, writing $\mathbf{0} \leq  a \perp  b \geq \mathbf{0}$ means that we have $a^{\top} b=0,\ a \geq \mathbf{0}$ and $b \geq \mathbf{0}$. Also note that each complementary constraint in (\ref{eq_lowerKKTa}) corresponds to a training point $x_{i}$ with $i \in Q_{l}$ (\ref{eqb2}). Each training point corresponds to a slack variable $\xi^{t}_i$. So we have each complementary constraint in (\ref{eq_lowerKKTb}) corresponds to a training point $x_{i}$ with $i \in Q_{l}$ (\ref{eqb2}). Therefore, there is a one-to-one correspondence between the index set of the training points $Q_{l}$ and the complementary constraints in (\ref{eq_lowerKKTa}) and   (\ref{eq_lowerKKTb}), respectively. This will be used in the definition of some index sets below.


Furthermore, we would like to emphasize the support vectors implied in (\ref{eq_lowerKKT}). From (\ref{eq_lowerKKTc}), the weight vector $w^{t}=(B^{t})^{\top} \alpha^{t}=\sum \limits_{i \in \overline{\mathcal{N}}_{t}} \alpha^{t}_{i} y_{i}x_{i}$. It implies that only the data points $x_{i} \in \overline{\Omega}_{t}$ which correspond to $\alpha^{t}_{i} \neq 0$ are involved. By $\alpha^{t}_{i} \geq 0$ in {\rm(\ref{eq_lowerKKTa})}, it means that only $x_{i} \in \overline{\Omega}_{t}$ with $\alpha^{t}_{i}>0$ are involved. It is for this reason that they are called \emph{support vectors}.
 By eliminating $\mu^{t}$ and $w^{t}$ from the system in \eqref{eq_lowerKKT}, we get the reduced KKT conditions for problem (\ref{eq2}) as follows:
\be \label{eqnew10}
\left\{\begin{aligned}
&\mathbf{0} \leq \alpha^{t} \perp  B^{t} (B^{t})^{\top} \alpha^{t}-\mathbf{1}+\xi^{t} \geq \mathbf{0}, \\
&\mathbf{0} \leq \xi^{t}  \perp  C\mathbf{1}-\alpha^{t} \geq \mathbf{0}.
\end{aligned}\right.
\ee
\subsubsection{Some properties of the lower-level problem}
Let $\alpha \in \mathbb{R}^{Tm_{2}},\ \xi \in \mathbb{R}^{Tm_{2}},\ w \in \mathbb{R}^{Tn}$, and $B \in \mathbb{R}^{Tm_{2} \times Tn}$ be defined by
\be \label{eqvar_lower}
\alpha:=\left[\begin{array}{l} \alpha^{1} \\ \alpha^{2}\\ \vdots \\ \alpha^{T}
\end{array}\right],\, \xi:=\left[\begin{array}{l} \xi^{1} \\ \xi^{2} \\ \vdots \\ \xi^{T}
\end{array}\right],\, w:=\left[\begin{array}{l} w^{1} \\ w^{2} \\ \vdots \\ w^{T}
\end{array}\right],\, \mbox{ and }\, B:=\left[\begin{array}{cccc} B^{1} & \mathbf{0} & \cdots & \mathbf{0} \\ \mathbf{0} &B^{2} & \cdots& \mathbf{0} \\ \vdots &\vdots& \ddots & \vdots \\
 \mathbf{0} & \mathbf{0} & \cdots & B^{T}
\end{array}\right],
\ee
respectively. The KKT conditions in (\ref{eqnew10}) can be decomposed as 
\begin{eqnarray}
\Lambda_{1}&:=&\{i \in Q_{l}\ \mid \ \alpha_{i}=0,\ (B B^{\top} \alpha-\mathbf{1}+\xi)_{i}=0,\ \xi_{i}=0\}, \label{eq6a}\\
\Lambda_{2}&:=&\{i \in Q_{l}\ \mid \ \alpha_{i}=0,\ (B B^{\top} \alpha-\mathbf{1}+\xi)_{i}>0,\ \xi_{i}=0\}, \label{eq6b}\\
\Lambda_{3}&:=&\{i \in Q_{l}\ \mid \ 0< \alpha_{i} \leq C,\ (B B^{\top} \alpha-\mathbf{1}+\xi)_{i}=0,\ \xi_{i}=0\},\label{eqLambda3}\\
\Lambda_{4}&:=&\{i \in Q_{l}\ \mid \ \alpha_{i}=C,\ (B B^{\top} \alpha-\mathbf{1}+\xi)_{i}=0,\ 0<\xi_{i}<1\}, \label{eq6e}\\
\Lambda_{5}&:=&\{i \in Q_{l}\ \mid \ \alpha_{i}=C,\ (B B^{\top} \alpha-\mathbf{1}+\xi)_{i}=0,\ \xi_{i}=1\}, \label{eq6f}\\
\Lambda_{6}&:=&\{i \in Q_{l}\ \mid \ \alpha_{i}=C,\ (B B^{\top} \alpha-\mathbf{1}+\xi)_{i}=0,\ \xi_{i}>1\}.\label{eq6g}
\end{eqnarray}
Obviously, the intersection of any pair of these index sets $\Lambda_{i}$ for $i=1, \ldots, 6$ is empty. An illustrative representation of data points corresponding to these index sets is given in Figure \ref{fig_lower}.
\begin{figure}[htbp]
\centering
\subfigure[(a) Points with indices in $\Lambda_{1}$]{
\includegraphics[width=0.28\textwidth]{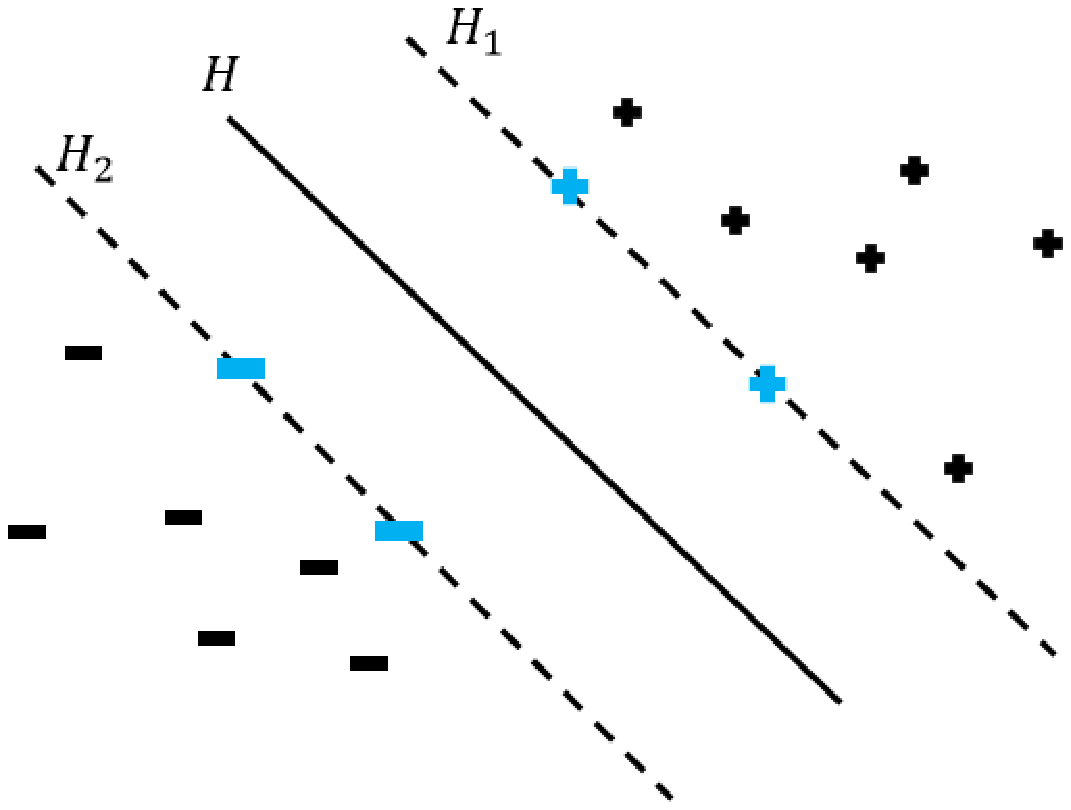}
}
\
\subfigure[(b) Points with indices in $\Lambda_{2}$]{
\includegraphics[width=0.28\textwidth]{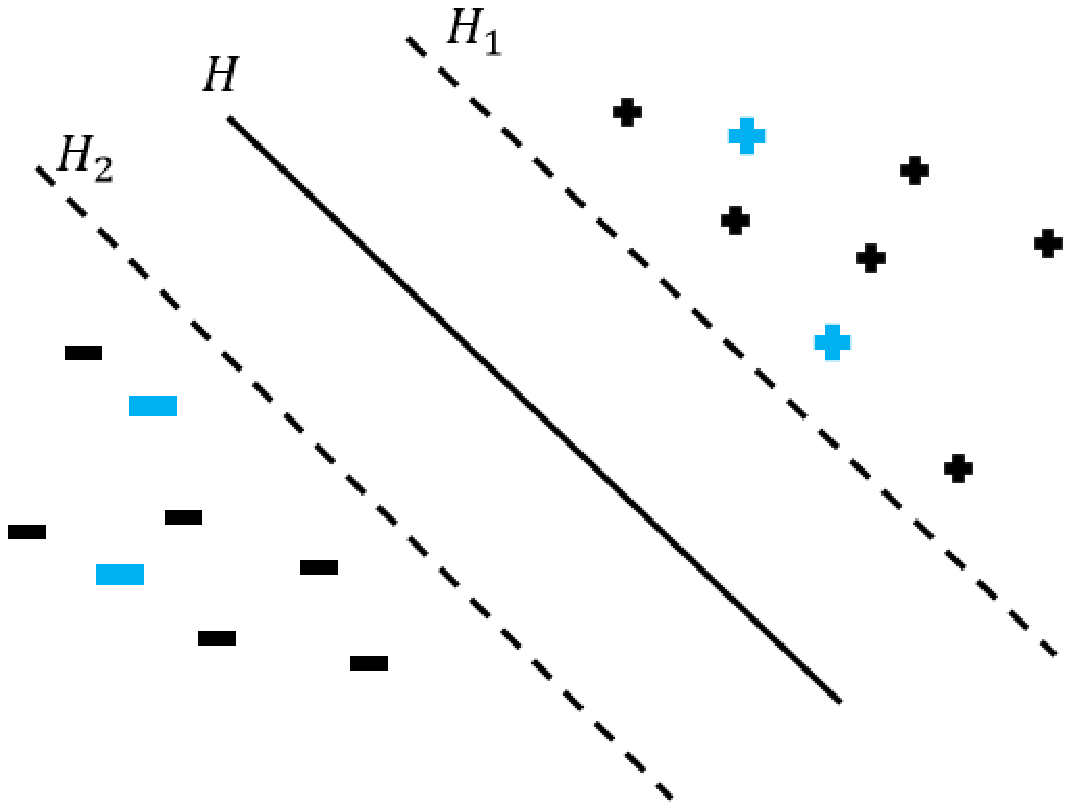}
}
\
\subfigure[(c) Points with indices in $\Lambda_{3}$]{
\includegraphics[width=0.28\textwidth]{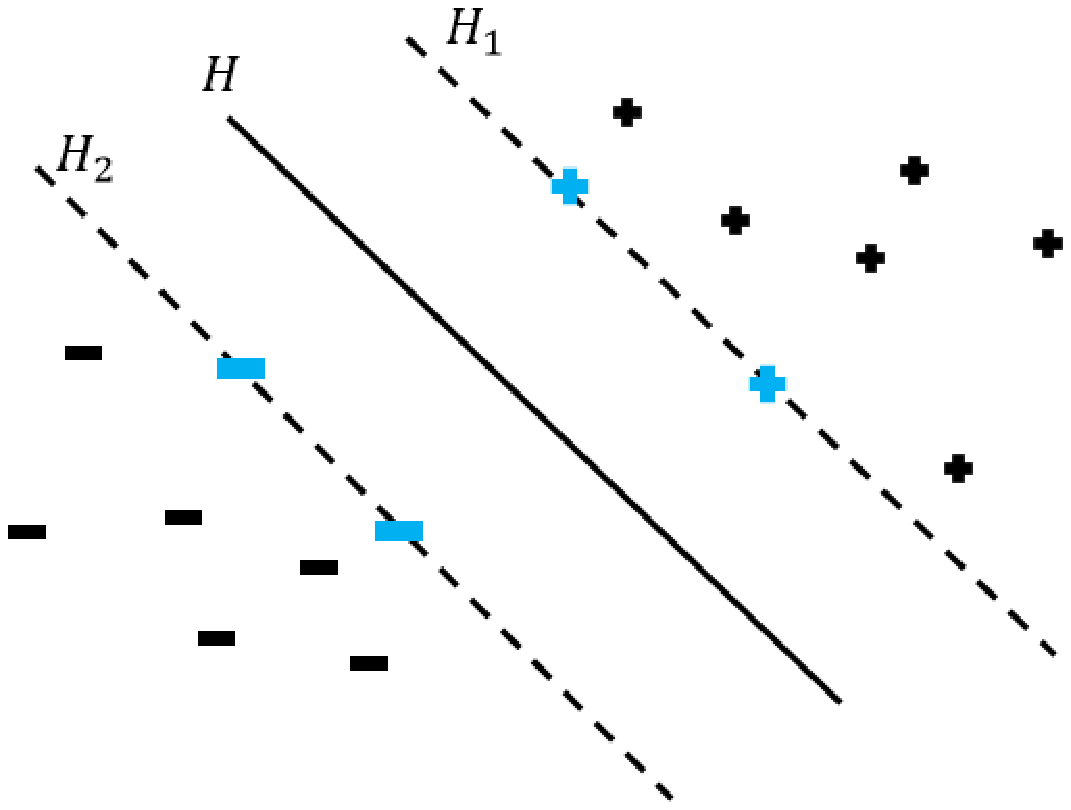}
}
\
\subfigure[(d) Points with indices in $\Lambda_{4}$]{
\includegraphics[width=0.28\textwidth]{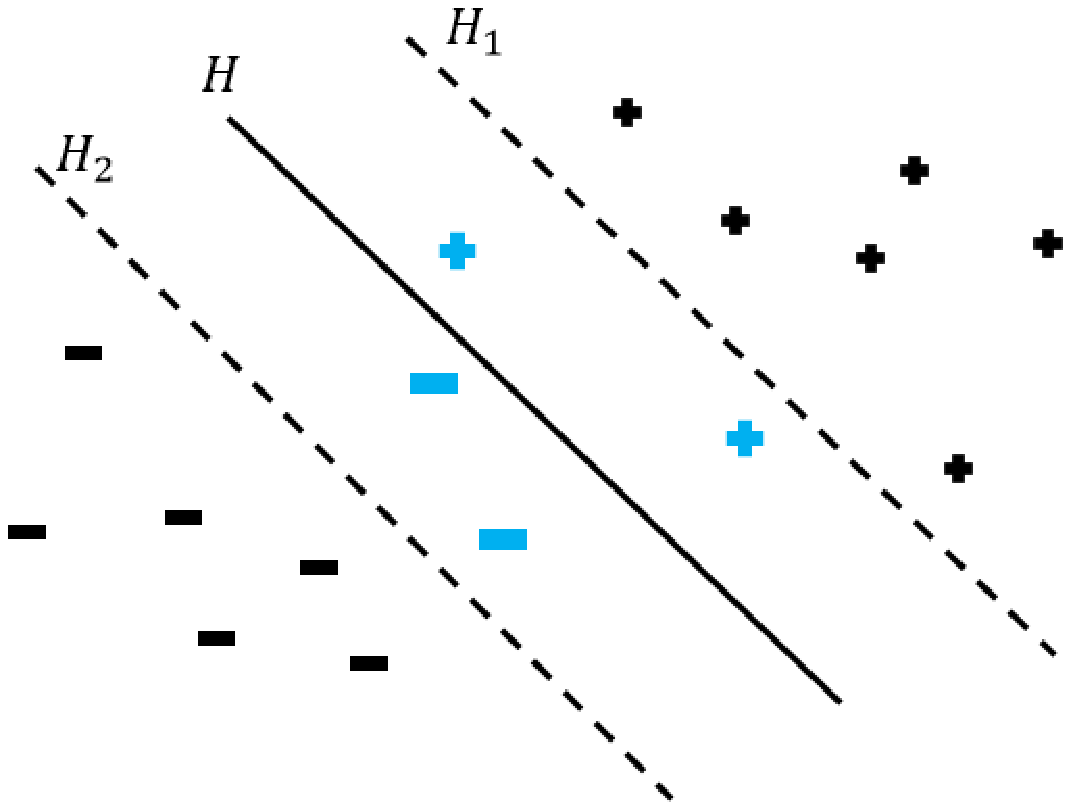}
}
\
\subfigure[(e) Points with indices in $\Lambda_{5}$]{
\includegraphics[width=0.28\textwidth]{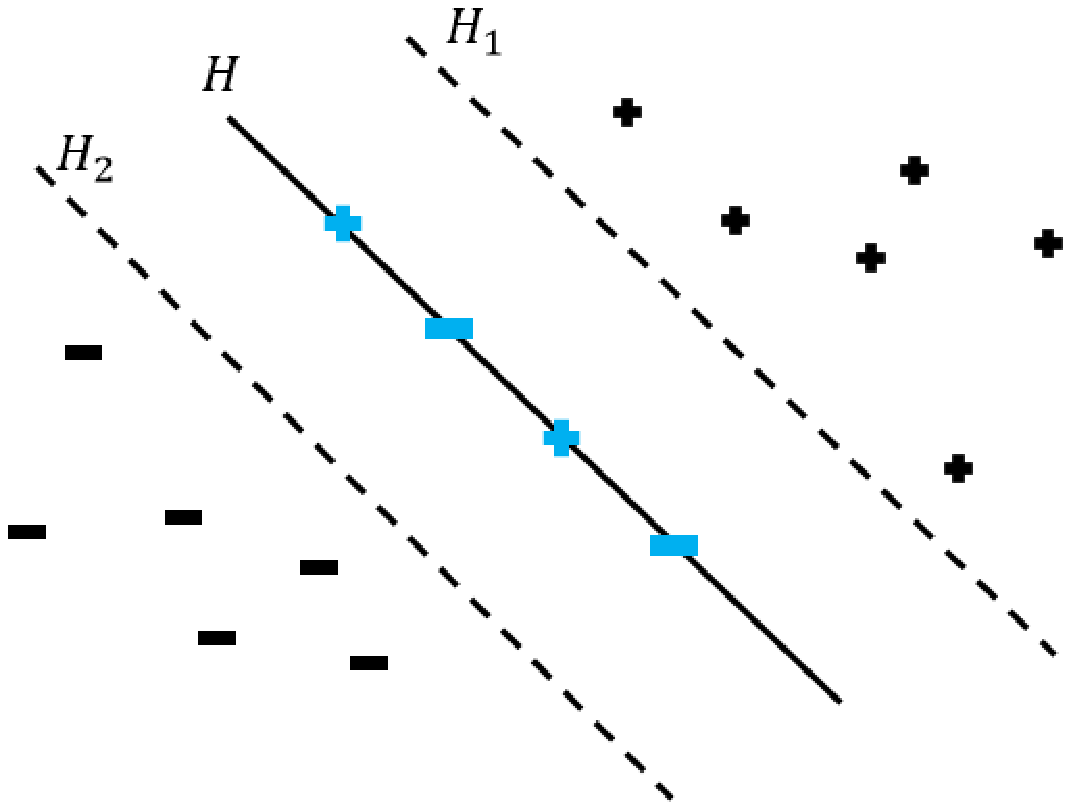}
}
\
\subfigure[(f) Points with indices in $\Lambda_{6}$]{
\includegraphics[width=0.28\textwidth]{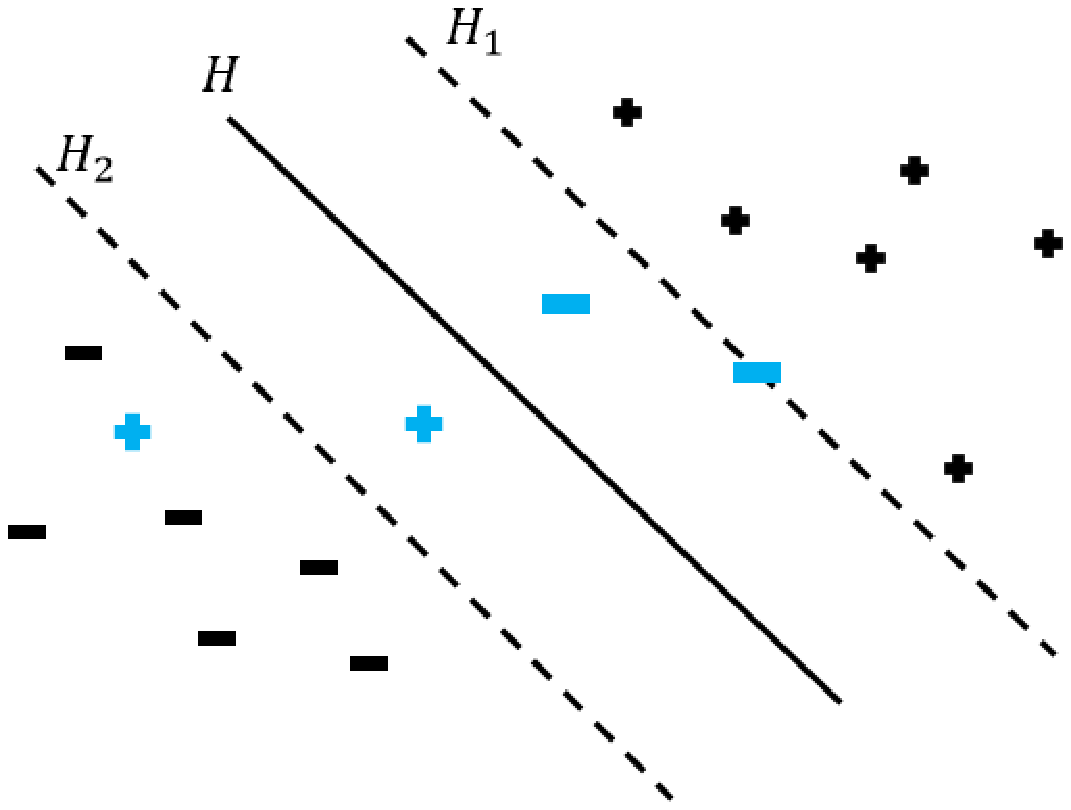}
}
\caption{Representation of points with index sets  $\Lambda_j$, $j=1, \ldots, 6$}\label{fig_lower}
\end{figure}
\begin{proposition}\label{pro7}
Considering the training points corresponding to $Q_{l}$ in {\rm(\ref{eqb2})}, let $(\alpha,\, \xi)$ satisfy the conditions in {\rm(\ref{eqnew10})}. Then, the following statements hold true:
\begin{itemize}
\item [{\rm  {(a)}}] The points $\{x_i\}_{i\in \Lambda_{1}}$ lie on the boundary of the margin; they are correctly classified points, but are not support vectors.
\item [{\rm  {(b)}}]  The points $\{x_i\}_{i\in \Lambda_{2}}$ lie on the correctly classified side of the boundary of the margin; they are correctly classified points, but are not support vectors.
\item [{\rm  {(c)}}]  The points $\{x_i\}_{i\in \Lambda_{3}}$ lie on the boundary of the margin; they are correctly classified points and are support vectors.
\item [{\rm  {(d)}}]  The points $\{x_i\}_{i\in \Lambda_{4}}$ lie between the separating hyperplane $H$ and the boundary of the margin; they are correctly classified therefore support vectors.
\item [{\rm  {(e)}}]  The points $\{x_i\}_{i\in \Lambda_{5}}$ lie on the separating hyperplane $H$; they are correctly classified points and are support vectors.
\item [{\rm  {(f)}}] The points $\{x_i\}_{i\in \Lambda_{6}}$ lie on the misclassified side of the separating hyperplane $H$; they are misclassified points and are support vectors.
\end{itemize}
\end{proposition}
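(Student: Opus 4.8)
The plan is to translate each of the algebraic conditions defining $\Lambda_{1},\ldots,\Lambda_{6}$ into a statement about the single scalar $y_{i}(w^{t})^{\top}x_{i}$, and then read off the geometry from Proposition~\ref{pro6} and its negative-point analogue. The bridge is the following observation: given $(\alpha^{t},\xi^{t})$ satisfying the reduced KKT system (\ref{eqnew10}), if we set $w^{t}:=(B^{t})^{\top}\alpha^{t}$ and $\mu^{t}:=C\mathbf{1}-\alpha^{t}$, then $(w^{t},\xi^{t},\alpha^{t},\mu^{t})$ solves the full KKT system (\ref{eq_lowerKKT}); since the lower-level problem (\ref{eq2}) is convex, this $w^{t}$ is an optimal solution of the $t$-th lower-level problem, so Proposition~\ref{pro6} is applicable to it. Now, for an index $i\in Q_{l}$ associated with the $t$-th fold, the corresponding row of $B^{t}$ is $y_{i}x_{i}^{\top}$, hence $(BB^{\top}\alpha)_{i}=\big(B^{t}(B^{t})^{\top}\alpha^{t}\big)_{i}=(B^{t}w^{t})_{i}=y_{i}(w^{t})^{\top}x_{i}$, and therefore
$$
(BB^{\top}\alpha-\mathbf{1}+\xi)_{i}=y_{i}(w^{t})^{\top}x_{i}-1+\xi_{i}.
$$

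I would then evaluate this identity on each index set. On $\Lambda_{1}$ and $\Lambda_{3}$, where $\xi_{i}=0$ and the middle term vanishes, it gives $y_{i}(w^{t})^{\top}x_{i}=1$; on $\Lambda_{2}$ it gives $y_{i}(w^{t})^{\top}x_{i}>1$; on $\Lambda_{4}$ it gives $y_{i}(w^{t})^{\top}x_{i}=1-\xi_{i}\in(0,1)$; on $\Lambda_{5}$ it gives $y_{i}(w^{t})^{\top}x_{i}=0$; and on $\Lambda_{6}$ it gives $y_{i}(w^{t})^{\top}x_{i}=1-\xi_{i}<0$. For a positive point $y_{i}(w^{t})^{\top}x_{i}=(w^{t})^{\top}x_{i}$, so applying, respectively, item (d) of Proposition~\ref{pro6} on $\Lambda_{1}$ and $\Lambda_{3}$, item (e) on $\Lambda_{2}$, item (c) on $\Lambda_{4}$, item (b) on $\Lambda_{5}$, and item (a) on $\Lambda_{6}$ yields exactly the stated location of $x_{i}$ relative to $H$, $H_{1}$ and $H_{2}$, together with the correctly-classified / misclassified conclusion; for a negative point the same values hold for $-(w^{t})^{\top}x_{i}$ and the negative-point analogue of Proposition~\ref{pro6} gives the identical conclusions.

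The support-vector statements then follow directly from the discussion after (\ref{eq_lowerKKT}): since $w^{t}=\sum_{i\in\overline{\mathcal{N}}_{t}}\alpha^{t}_{i}y_{i}x_{i}$, the point $x_{i}$ is a support vector if and only if $\alpha^{t}_{i}>0$. Hence the points indexed by $\Lambda_{1}$ and $\Lambda_{2}$, where $\alpha_{i}=0$, are not support vectors, whereas those indexed by $\Lambda_{3},\ldots,\Lambda_{6}$, where $\alpha_{i}>0$, are. It is also worth noting, for completeness, that the sets $\Lambda_{1},\ldots,\Lambda_{6}$ exhaust all possibilities for a solution of (\ref{eqnew10}) when $C>0$: the complementarity $\mathbf{0}\le\xi^{t}\perp C\mathbf{1}-\alpha^{t}\ge\mathbf{0}$ forces $\xi_{i}=0$ whenever $\alpha_{i}<C$, so the case split on $\alpha_{i}\in\{0\}$, $\alpha_{i}\in(0,C)$, $\alpha_{i}=C$ — and, in the last of these, on $\xi_{i}\in\{0\}$, $\xi_{i}\in(0,1)$, $\xi_{i}=1$, $\xi_{i}>1$ — reproduces precisely $\Lambda_{1},\ldots,\Lambda_{6}$.

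I do not expect a genuine obstacle: once the identity $(BB^{\top}\alpha-\mathbf{1}+\xi)_{i}=y_{i}(w^{t})^{\top}x_{i}-1+\xi_{i}$ is in place, the rest is a case-by-case application of Proposition~\ref{pro6}. The only points that need care are (i) checking that the $w^{t}$ reconstructed from $(\alpha^{t},\xi^{t})$ is indeed an optimal solution of the lower-level problem, so that Proposition~\ref{pro6} applies; (ii) pairing each $\Lambda_{j}$ with the correct item of Proposition~\ref{pro6}, and remembering to use its negative-point analogue for points with $y_{i}=-1$; and (iii) treating the sub-case $\alpha_{i}=C$, $\xi_{i}=0$ that is absorbed into $\Lambda_{3}$ on the same footing as $0<\alpha_{i}<C$.
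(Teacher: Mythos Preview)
Your proposal is correct and follows essentially the same route as the paper: both substitute $w^{t}=(B^{t})^{\top}\alpha^{t}$ to rewrite $(BB^{\top}\alpha-\mathbf{1}+\xi)_{i}$ in terms of $y_{i}(w^{t})^{\top}x_{i}$, read off the value of this scalar on each $\Lambda_{j}$, invoke the matching item of Proposition~\ref{pro6} (illustrated on positive points), and conclude the support-vector status from the sign of $\alpha_{i}$. Your version is in fact a bit more careful, since you justify that the reconstructed $w^{t}$ is optimal (so Proposition~\ref{pro6} applies) and note the exhaustiveness of the case split, neither of which the paper makes explicit.
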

\begin{proof}
We take positive points for example. The same analysis can be applied to negative ones. Since $w=B^{\top} \alpha$ in (\ref{eq_lowerKKTc}), we get $(B B^{\top} \alpha -\mathbf{1}+\xi)_{i}=(Bw -\mathbf{1}+\xi)_{i}$.
\begin{itemize}
\item[{\rm (a)}] For the points $\{x_i\}_{i\in \Lambda_{1}}$, since $\xi_{i}=0$ in {\rm (\ref{eq6a})}, we have $(Bw-\mathbf{1}+\xi)_{i}=(Bw-\mathbf{1})_{i}=0$, that is, $y_i(w^{\top} x_i) -1=0$. For a positive point, $y_{i}=1$, it implies that $w^{\top} x_i=1$. It corresponds to (d) in Proposition \ref{pro6}. Therefore, it means that the point $x_i$ lies on the boundary of the margin. It is correctly classified, and it is not a support vector, since $\alpha_{i}=0$.
\item[{\rm (b)}] For the points $\{x_i\}_{i\in \Lambda_{2}}$, since $\xi_{i}=0$ in {\rm (\ref{eq6b})}, we have $(Bw-\mathbf{1}+\xi)_{i}=(Bw-\mathbf{1})_{i}>0$, that is, $y_i(w^{\top} x_i) -1>0$. For a positive point, $y_{i}=1$, it implies that $w^{\top} x_i>1$. It corresponds to (e) in Proposition \ref{pro6}. Therefore, it means that the point $x_i$ lies on the correctly classified side of the boundary of the margin. It is correctly classified, but not a support vector, as $\alpha_{i}=0$.
\item[{\rm (c)}] For the points $\{x_i\}_{i\in \Lambda_{3}}$, since $\xi_{i}=0$ in {\rm (\ref{eqLambda3})}, we have $(Bw-\mathbf{1}+\xi)_{i}=(Bw-\mathbf{1})_{i}=0$, that is, $y_i(w^{\top} x_i) -1=0$. For a positive point, $y_{i}=1$, it implies that $w^{\top} x_i=1$. It corresponds to (d) in Proposition \ref{pro6}. Therefore, it means that the point $x_i$ lies on the boundary of the margin. It is correctly classified, and it is a support vector, since $\alpha_{i} >0$.
\item[{\rm (d)}] For the points $\{x_i\}_{i\in \Lambda_{4}}$, since $0<\xi_{i}<1$ in {\rm (\ref{eq6e})}, we have $0<(Bw)_{i}<1$, that is, $0<y_i(w^{\top} x_i)<1$. For a positive point, $y_{i}=1$, it implies that $0<w^{\top} x_i<1$. It corresponds to (c) in Proposition \ref{pro6}. Therefore, $x_i$ lies between the separating hyperplane $H$ and the boundary of the margin. It is correctly classified, and it is a support vector, since $\alpha_{i} >0$.
\item[{\rm (e)}] For the points $\{x_i\}_{i\in \Lambda_{5}}$, since $\xi_{i}=1$ in {\rm (\ref{eq6f})}, we have $(Bw-\mathbf{1}+\xi)_{i}=(Bw)_{i}=0$, that is, $y_i(w^{\top} x_i)=0$. For a positive point, $y_{i}=1$, it implies that $w^{\top} x_i=0$. It corresponds to (b) in Proposition \ref{pro6}. Therefore, it means that the point $x_i$ lies on the separating hyperplane $H$. It is correctly classified, and it is a support vector, since $\alpha_{i} >0$.
\item[{\rm (f)}] For the points $\{x_i\}_{i\in \Lambda_{6}}$, since $\xi_{i}>1$ in {\rm (\ref{eq6g})}, we have $(Bw)_{i}<0$, that is, $y_i(w^{\top} x_i)<0$. For a positive point, $y_{i}=1$, it implies that $w^{\top} x_i<0$. It corresponds to (a) in Proposition \ref{pro6}. Therefore, it means that the point $x_i$ lies on the misclassified side of the separating hyperplane $H$. It is misclassified, and it is a support vector, since $\alpha_{i} >0$.
\end{itemize}
\end{proof}

\subsection{The upper-level problem}
In this part, we introduce the upper-level problem, that is, the bilevel  {optimization} model for  {hyperparameter} selection in SVC under the settings of T-fold cross-validation. Note that the aim of the upper-level problem is to minimize the T-fold cross-validation error (CV error) measured on the validation sets based on the optimal solutions of the lower-level problems. Specifically, the basic bilevel  {optimization} model for selecting the hyperparameter $C$ in SVC is formulated as
\be \label{eq31}
\begin{aligned}
\min \limits _{C \in \mathbb{R},\ w^{t} \in \mathbb{R}^{n},\  j=1, \cdots,T} & \frac{1}{T}\sum_{t=1}^{T} \frac{1}{m_{1}} \sum_{i \in \mathcal{N}_{t}} \| \left( -y_{i} \left( x_{i}^{\top} w^{t}\right) \right)_{+}\|_{0}\\
\hbox{s.t.} \quad \quad \quad \ & C \geq 0,\\
 \quad \quad \quad \ & \text{and for} \quad t=1,\cdots,T: \\
\quad \quad & w^{t} \in \underset{ w \in \mathbb{R}^{n}}{\operatorname{argmin}} \left\{\frac{1}{2}\|w\|_{2}^{2}+ C \sum_{i \in \overline{\mathcal{N}}_{t}}  \left( \mathbf{1}-y_{i}\left(x_{i}^{\top}w\right)\right)_{+}
\right\}.
\end{aligned}
\ee
Here, the expression $\sum_{i \in \mathcal{N}_{t}}\| \left( -y_{i} \left( x_{i}^{\top} w^{t}\right) \right)_{+}\|_{0}$ basically counts the number of data points that are misclassified in the validation set $\Omega_{t}$, while the outer summation (i.e., the objective function in (\ref{eq31})) averages the misclassification error over all the folds.

Problem (\ref{eq31}) can be equivalently written in the matrix form as follows
\be \label{eq1}
\setlength{\abovedisplayskip}{3pt}
\begin{aligned}
\min \limits _{C \in \mathbb{R},\ w^{t} \in \mathbb{R}^{n},\ j=1,\cdots,T} & \frac{1}{T} \sum_{t=1}^{T} \frac{1}{m_{1}} \|  \left( -A^{t} w^{t} \right)_{+} \|_{0}\\
\hbox{s.t.}  \quad \quad \quad \ & C \geq 0,\\
\quad \quad \quad \ & \text{and for} \quad t=1,\cdots,T: \\
\quad \quad & w^{t} \in \underset{w \in \mathbb{R}^{n}}{\operatorname{argmin}} \left\{\frac{1}{2}\|w\|_{2}^{2}+ C \|  \left(\mathbf{1}-B^{t} w\right)_{+} \|_{1} \right\},
\end{aligned}
\ee
where, for $t=1,\ \cdots,\ T$ and $k=1,\ \cdots,\ m_{1}$, we have
$$
 A^{t}=\left[\begin{array}{c}
y_{t_1} x_{t_1}^{\top} \\
\vdots \\
y_{t_{m_{1}}} x_{t_{m_{1}}}^{\top}
\end{array}\right] \in \mathbb{R}^{m_{1} \times n} \;\, \mbox{ and }\;\, (x_{t_k},y_{t_k}) \in \Omega_{t}.
\setlength{\abovedisplayskip}{3pt}
$$
\begin{remark}
Compared with the model in {\rm \cite{kunapuli2008classification}}, for example, we consider a simpler bilevel  {optimization} model, without the box constraints in the upper-level problem,
\end{remark}
\section{Single-level reformulation and method}\label{sec3}

 In this section, we first reformulate the bilevel  {optimization} problem as a single-level optimization problem, precisely, we write the problem as an MPEC. Then we present the properties of this single-level problem. Finally, we discuss the  {GRM} to solve the MPEC problem.
\subsection{The MPEC reformulation}
Recall the upper-level objective function in (\ref{eq1}) is a measure
of misclassification error based on the $T$ out-of-sample validation sets, which we minimize. The
measure used here is the classical cross-validation error (CV error) for classification, the average number
of the data points misclassified. It is clear that $\|(\cdot)_{+}\|_{0}$ is discontinuous and nonconvex. However, the function $\|(\cdot)_{+}\|_{0}$ can be characterized as the sum of all elements of the solution to the following linear optimization problem as demonstrated in \cite{mangasarian1994misclassification}, i.e.,
\[
\setlength{\abovedisplayskip}{1.5pt}
 \| r_{+} \|_{0}=\left\{\sum \limits_{i=1}^{m_1} \zeta_{i}:\; \zeta =\underset{u}{\operatorname{argmin}} \left\{ -u^{\top} r \ :\ \mathbf{0} \leq u \leq \mathbf{1} \right\}\right\}.
\]
Therefore, for each fold, $ \| \left(-A^{t} w^{t} \right)_{+} \|_{0}$ is the sum of all elements of the solution to the following linear optimization problem:
 \be \label{eq5}
\begin{aligned}
& \min \limits_{\zeta^{t} \in \mathbb{R}^{m_{1}}}  \ -(\zeta^{t})^{\top} (-A^{t} w^{t}) \\
&\ \ \begin{array}{ll} \hbox{s.t.} \ \ \zeta^{t} \geq \mathbf{0}, \\
 \quad \quad \mathbf{1}- \zeta^{t} \geq \mathbf{0}.
\end{array}
\end{aligned}
\ee
This implies that $ \| \left(-A^{t} w^{t} \right)_{+} \|_{0}= \sum \limits_{i=1}^{m_{1}}\zeta^{t}_{i}$ in each fold. According to Proposition \ref{pro8}, there are two cases for the validation points:
\begin{itemize}
\item [1.] If the validation point $(x_{i},y_{i}) \in \Omega_{t}$ is misclassified, then $y_{i} \left( x_{i}^{\top} w^{t}\right)<0 $. That is, $ (-A^{t} w^{t})_{i} >0$, which corresponds to $(\left( -A^{t} w^{t}\right)_{+})_{i}>0$.
\item [2.] If the validation point $(x_{i},y_{i}) \in \Omega_{t}$ is correctly classified, we have $y_{i} \left( x_{i}^{\top} w^{t}\right)\geq 0$. There are two cases.
    Firstly, $x_{i}$ lies on the separating hyperplane $H$, that is,  $y_{i} \left( x_{i}^{\top} w^{t}\right)=0$. For $y_{i}=1$, there is $(-A^{t} w^{t})_{i} =0$, which corresponds to $(\left( -A^{t} w^{t}\right)_{+})_{i}=0$.
    Secondly, $x_{i}$ lies on the correctly classified side of the separating hyperplane $H$, that is, $y_{i} \left( x_{i}^{\top} w^{t}\right)>0$. For $y_{i}=1$, there is $ (-A^{t} w^{t})_{i} <0$, which corresponds to $(\left( -A^{t} w^{t}\right)_{+})_{i}=0$.
\end{itemize}
Combining with $ \| \left(-A^{t} w^{t} \right)_{+} \|_{0}= \sum \limits_{i=1}^{m_{1}}\zeta^{t}_{i}$, it means that
  \be \label{eq57}
  \setlength{\abovedisplayskip}{1.5pt}
  \zeta^{t}_{i}=\left\{\begin{array}{ll}
1, \quad \quad \text{if} \quad (x_{i},y_{i}) \in \Omega_{t} \ \text{is misclassified}, \\
0, \quad \quad \text{if} \quad (x_{i},y_{i}) \in \Omega_{t}\ \text{is correctly classified},
\end{array}\right.
\setlength{\belowdisplayskip}{1.5pt}
  \ee
  where $\zeta^{t}_{i}$ is the $i$-th element of $\zeta^{t}$ in the $t$-th fold.

The linear programs (LPs) (\ref{eq5}), for $t= 1, \ldots, T$, are inserted into the bilevel optimization problem in order to recast the discontinuous upper-level objective function into a continuous one. Each LP in the form of (\ref{eq5}) can also be replaced with its KKT conditions as follows
$$
\left\{\begin{aligned}
& \begin{array}{l}
\mathbf{0} \leq \zeta^{t} \perp  \lambda^{t} \geq \mathbf{0}, \\ \mathbf{0} \leq z^{t}  \perp  \mathbf{1}-\zeta^{t} \geq \mathbf{0}, \\
A^{t} w^{t}-\lambda^{t}+z^{t}=\mathbf{0}.  \end{array}
\end{aligned}\right.
$$
By eliminating $\lambda^{t}$ and $w^{t}$ with $w^{t}=(B^{t})^{\top} \alpha^{t}$ in (\ref{eq_lowerKKTc}), we get the reduced KKT conditions for problem (\ref{eq5}) with
\begin{subequations} \label{eq8}
\begin{align}
&\mathbf{0} \leq \zeta^{t} \perp  A^{t} (B^{t})^{\top} \alpha^{t}+z^{t} \geq \mathbf{0}, \label{eq8a} \\
& \mathbf{0} \leq z^{t}  \perp  \mathbf{1}-\zeta^{t} \geq \mathbf{0}. \label{eq8b}
\end{align}
\end{subequations}
Note that each complementary constraint in (\ref{eq8a}) corresponds to a validation point $x_{i}$ with $i \in Q_{u}$ (\ref{eqb1}). Each validation point corresponds to a variable $\zeta^{t}_i$. So we have each complementary constraint in (\ref{eq8b}) corresponds to a validation point $x_{i}$ with $i \in Q_{u}$ (\ref{eqb1}). Therefore, there is a one-to-one correspondence between the index set of the validation points $Q_{u}$ and the complementary constraints in (\ref{eq8a}) and (\ref{eq8b}), respectively. 

Combining the systems in (\ref{eqnew10}) and (\ref{eq8}), we can transform the bilevel optimization problem (\ref{eq1}) into the single-level optimization problem
\be \label{eq9}
\setlength{\abovedisplayskip}{3pt}
\begin{aligned}
\min_{\begin{subarray}{l}
 \quad \quad \ C \in \mathbb{R} \\
\zeta^{t} \in \mathbb{R}^{m_{1}},\ z^{t} \in \mathbb{R}^{m_{1}}\\
\alpha^{t} \in \mathbb{R}^{m_{2}},\ \xi^{t} \in \mathbb{R}^{m_{2}}\\
\quad \ t=1,\ \cdots,\ T
 \end{subarray}} & \frac{1}{T m_{1}}  \sum_{i=1}^{m_{1}} \sum_{t=1}^{T} \zeta^{t}_{i}\\
 \quad \quad \quad \hbox{s.t.} \quad \quad \ & C \geq 0, \\
 \quad \quad \quad & \text{and for} \quad t=1,\ \cdots,\ T: \\
\quad \quad & \left\{\begin{array}{ll} \mathbf{0} \leq  \zeta^{t} \perp A^{t} (B^{t})^{\top} \alpha^{t}+z^{t} \geq \mathbf{0}, \\
 \mathbf{0} \leq  z^{t}  \perp  \mathbf{1}-\zeta^{t} \geq \mathbf{0},  \\
 \mathbf{0} \leq  \alpha^{t} \perp  B^{t} (B^{t})^{\top} \alpha^{t}- \mathbf{1}+\xi^{t} \geq \mathbf{0},\\
 \mathbf{0} \leq  \xi^{t}  \perp  C \mathbf{1}-\alpha^{t} \geq \mathbf{0}.
  \end{array}\right.
\end{aligned}
\ee
Note that the constraints $C\mathbf{1}-\alpha^{t} \geq \mathbf{0}$ and $\alpha^{t} \geq \mathbf{0}$ imply $C \geq 0$. Therefore, we remove the redundant constraint $C \geq 0$, and get an equivalent form of the problem above as follows
\be \label{eq54}
\setlength{\abovedisplayskip}{3pt}
\begin{aligned}
\min_{\begin{subarray}{l}
 \quad \quad \ C \in \mathbb{R} \\
\zeta^{t} \in \mathbb{R}^{m_{1}},\ z^{t} \in \mathbb{R}^{m_{1}}\\
\alpha^{t} \in \mathbb{R}^{m_{2}},\ \xi^{t} \in \mathbb{R}^{m_{2}}\\
\quad \ t=1,\ \cdots,\ T
 \end{subarray}} & \frac{1}{T m_{1}}  \sum_{i=1}^{m_{1}} \sum_{t=1}^{T} \zeta^{t}_{i}\\
\quad \quad \quad \hbox{s.t.} \quad \quad \ & \text{for} \quad t=1,\ \cdots,\ T: \\
\quad \quad & \left\{\begin{array}{ll} \mathbf{0} \leq  \zeta^{t} \perp A^{t} (B^{t})^{\top} \alpha^{t}+z^{t} \geq \mathbf{0}, \\
 \mathbf{0} \leq  z^{t}  \perp  \mathbf{1}-\zeta^{t} \geq \mathbf{0},  \\
 \mathbf{0} \leq  \alpha^{t} \perp  B^{t} (B^{t})^{\top} \alpha^{t}- \mathbf{1}+\xi^{t} \geq \mathbf{0},\\
 \mathbf{0} \leq  \xi^{t}  \perp  C \mathbf{1}-\alpha^{t} \geq \mathbf{0}.
  \end{array}\right.
\end{aligned}
\ee
 The presence of the equilibrium constraints makes problem (\ref{eq54}) an instance of an MPEC, which is sometimes labelled as an extension of a bilevel optimization problem \cite{luo1996mathematical}. The optimal hyperparameter is now well defined as a global optimal solution to the MPEC \cite{lee2015global}.
Now that we have transformed a bilevel classification model into the MPEC (\ref{eq54}), we can rewrite it in a compact form
\be \label{eq15}
\begin{aligned}
& \min_{\begin{subarray}{c}
C \in \mathbb{R} \\
\zeta \in \mathbb{R}^{Tm_{1}},\ z \in \mathbb{R}^{Tm_{1}}\\
\alpha \in \mathbb{R}^{Tm_{2}},\ \xi \in \mathbb{R}^{Tm_{2}}
\end{subarray}}  \frac{1}{Tm_{1}} \mathbf{1}^{\top} \zeta\\
& \quad  \quad \ \begin{array}{ll}\hbox{s.t.}\ \ \mathbf{0} \leq  \zeta \perp  A B^{\top} \alpha+z  \geq \mathbf{0}, \\
\quad \quad \mathbf{0} \leq  z  \perp  \mathbf{1}-\zeta  \geq \mathbf{0},  \\
\quad \quad \mathbf{0} \leq  \alpha  \perp  B B^{\top} \alpha- \mathbf{1}+\xi \geq \mathbf{0},\\
\quad \quad \mathbf{0} \leq  \xi  \perp  C \mathbf{1}-\alpha  \geq \mathbf{0},
  \end{array}
\end{aligned}
\ee
where $\alpha$, $\xi$ and $B$ are defined in (\ref{eqvar_lower}), while
\[
 \setlength{\abovedisplayskip}{3pt}
 \begin{array}{c}
 \zeta\!:=\!\left[\begin{array}{l} \zeta^{1} \\ \zeta^{2} \\ \vdots \\ \zeta^{T}
\end{array}\right] \! \in \! \mathbb{R}^{Tm_{1}},\ z \!:=\!\left[\begin{array}{l} z^{1} \\ z^{2} \\ \vdots \\ z^{T}
\end{array}\right] \! \in \! \mathbb{R}^{Tm_{1}},\
 A:=\left[\begin{array}{cccc} A^{1} & \mathbf{0} & \cdots & \mathbf{0} \\ \mathbf{0} &A^{2} & \cdots& \mathbf{0} \\ \vdots &\vdots& \ddots & \vdots \\
 \mathbf{0} & \mathbf{0} & \cdots & A^{T}
\end{array}\right] \in \mathbb{R}^{Tm_{1} \times {Tn}},
 \end{array}
\]
From now on, all our analysis is going to be based on the model in (\ref{eq15}).

\subsection{Some properties of the MPEC reformulation}
Observe that the last two constraints of problem (\ref{eq15}) correspond to the complementarity systems that are part of the KKT conditions of the lower-level problem in (\ref{eqnew10}). As the latter conditions are carefully studied in Proposition \ref{pro7}, it remains to analyze the first two complementarity systems describing the feasible set of problem  (\ref{eq15}). Hence, we partition them as follows
\begin{eqnarray}
\Psi_{1}&:=&\left\{i \in Q_{u}\ \mid \ 0\leq \zeta_{i}<1,\ (A B^{\top} \alpha+z)_{i}=0,\ z_{i}=0\right\}, \label{eqPsi1}\\
\Psi_{2}&:=&\left\{i \in Q_{u}\ \mid \ \zeta_{i}=0,\ (A B^{\top} \alpha+z)_{i}>0,\ z_{i}=0\right\}, \label{eq7b}\\
\Psi_{3}&:=&\left\{i \in Q_{u}\ \mid \ \zeta_{i}=1,\ (A B^{\top} \alpha+z)_{i}=0,\ z_{i}>0\right\}. \label{eq7d}
\end{eqnarray}
Similarly to (\ref{eq6a})--(\ref{eq6g}), the intersection of any pair of the index sets $\Psi_{j}$ for $j=1, \,2, \, 3$ is empty. In the same vein, an illustrative representation of data points corresponding to the  index sets $\Psi_{j}$ for $j=1, \,2, \, 3$ is given in Figure \ref{fig_upper}.
\begin{figure}[htbp]
\centering
\subfigure[(a) Points with indices in $\Psi_{1}$]{
\includegraphics[width=0.28\textwidth]{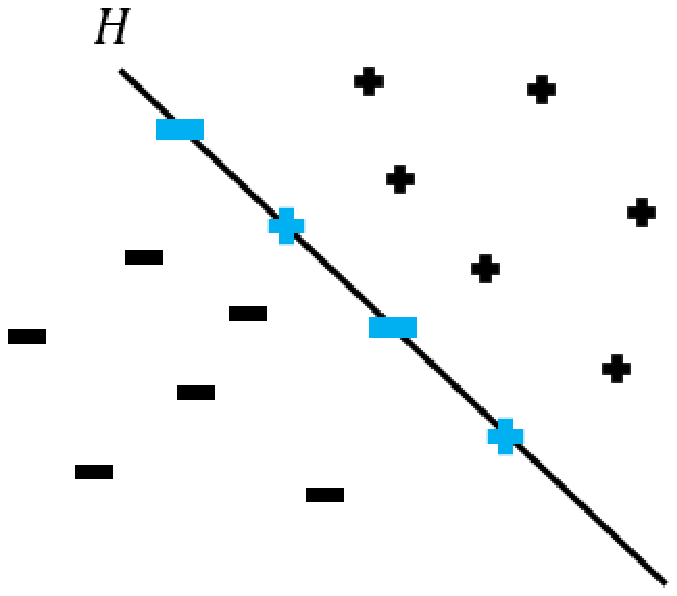}
}
\
\subfigure[(b) Points with indices in $\Psi_{2}$]{
\includegraphics[width=0.28\textwidth]{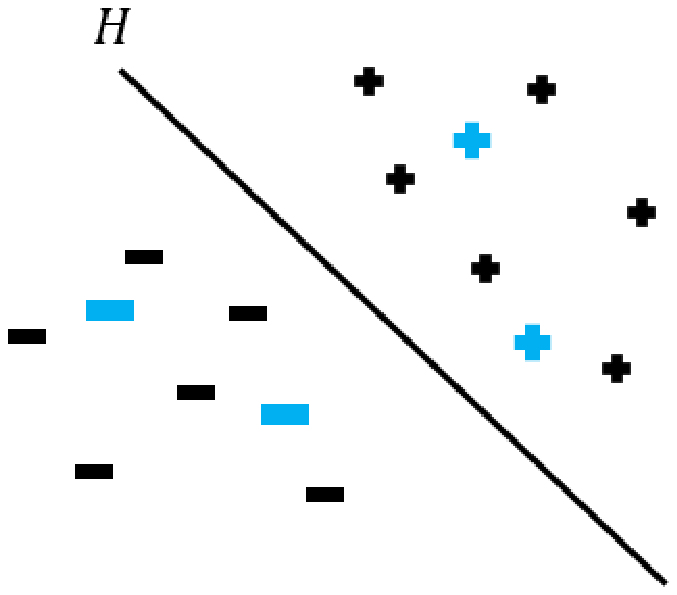}
}
\
\subfigure[(c) Points with indices in $\Psi_{3}$]{
\includegraphics[width=0.28\textwidth]{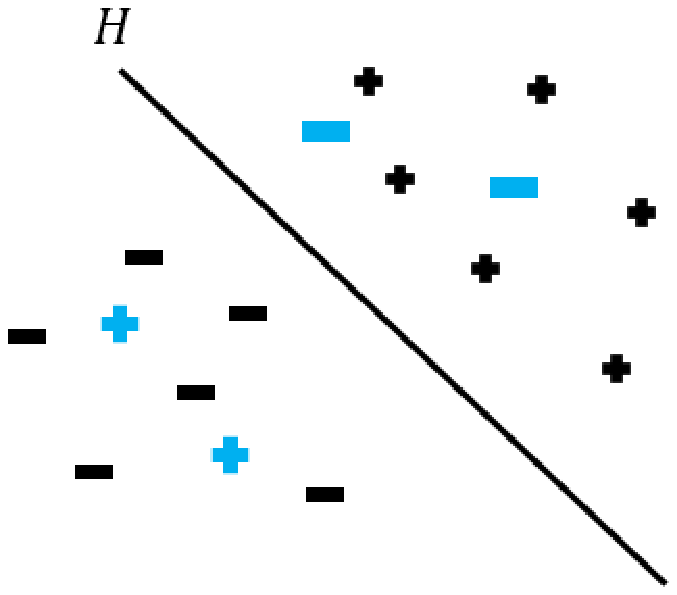}
}
\caption{Representation of points with index sets  $\Psi_j$, $j=1,\, 2, \, 3$}\label{fig_upper}
\end{figure}
\begin{proposition}\label{pro9}
Considering the validation points corresponding to $Q_{u}$ in {\rm(\ref{eqb1})}, let $(\zeta,\ z, \, \alpha)$ satisfy the first two complementarity systems describing the feasible set of problem  {\rm(\ref{eq15})}. Then, the following statements hold true:
\begin{itemize}
\item[ {{\rm (a)}}] The points $\{x_i\}_{i\in \Psi_{1}}$ lie on the separating hyperplane $H$ and are therefore correctly classified.
\item[ {{\rm (b)}}] The points $\{x_i\}_{i\in \Psi_{2}}$ lie on the correctly classified side of the separating hyperplane $H$ and are therefore correctly classified.
\item[ {{\rm (c)}}] The points $\{x_i\}_{i\in \Psi_{3}}$ lie on the misclassified side of the separating hyperplane $H$ and are therefore misclassified.
\end{itemize}
\end{proposition}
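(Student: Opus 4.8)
The plan is to mirror the structure of the proof of Proposition \ref{pro7}: translate each of the algebraic conditions defining $\Psi_{1}$, $\Psi_{2}$, $\Psi_{3}$ in (\ref{eqPsi1})--(\ref{eq7d}) into a statement about the scalar $(w^{t})^{\top} x_{i}$, and then invoke Proposition \ref{pro8} to read off the geometric position of $x_{i}$ and its (mis)classification status. As in the earlier proofs, I would argue for positive points $x_{i}$ (i.e.\ $y_{i}=1$) explicitly, the negative case being entirely symmetric.

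The first step is to recover the identity linking the first complementarity system of (\ref{eq15}) to the separating hyperplane. Since the lower-level KKT condition (\ref{eq_lowerKKTc}) gives $w^{t}=(B^{t})^{\top}\alpha^{t}$, i.e.\ $w=B^{\top}\alpha$ in the stacked notation of (\ref{eqvar_lower}), and since $A$ is block diagonal with $t$-th block $A^{t}$ whose rows are $y_{t_{k}}x_{t_{k}}^{\top}$, for every validation index $i\in Q_{u}$ lying in the $t$-th fold one has $(AB^{\top}\alpha)_{i}=(A^{t}w^{t})_{i}=y_{i}(x_{i}^{\top}w^{t})$. Consequently the quantity $(AB^{\top}\alpha+z)_{i}$ occurring in (\ref{eqPsi1})--(\ref{eq7d}) equals $y_{i}(x_{i}^{\top}w^{t})+z_{i}$. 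Here I use that, as part of feasibility for (\ref{eq15}), $(\alpha,\xi,C)$ satisfies the reduced lower-level KKT system (\ref{eqnew10}), so that $w^{t}=(B^{t})^{\top}\alpha^{t}$ is indeed an optimal solution of the $t$-th lower-level problem and Proposition \ref{pro8} is applicable; I would also use the one-to-one correspondence between $Q_{u}$ and the complementarity constraints in (\ref{eq8a})--(\ref{eq8b}) so that each index $i$ is unambiguously attached to its fold $t$.

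Then I would proceed case by case. For (a), $i\in\Psi_{1}$ forces $z_{i}=0$ and $(AB^{\top}\alpha+z)_{i}=0$, hence $y_{i}(x_{i}^{\top}w^{t})=0$; for a positive point this reads $(w^{t})^{\top}x_{i}=0$, which is exactly case (b) of Proposition \ref{pro8}, so $x_{i}$ lies on $H$ and is correctly classified. For (b), $i\in\Psi_{2}$ forces $z_{i}=0$ and $(AB^{\top}\alpha+z)_{i}>0$, hence $y_{i}(x_{i}^{\top}w^{t})>0$, i.e.\ $(w^{t})^{\top}x_{i}>0$ for a positive point; this is case (c) of Proposition \ref{pro8}, so $x_{i}$ lies on the correctly classified side of $H$. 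For (c), $i\in\Psi_{3}$ forces $z_{i}>0$ and $(AB^{\top}\alpha+z)_{i}=0$, hence $y_{i}(x_{i}^{\top}w^{t})=-z_{i}<0$, i.e.\ $(w^{t})^{\top}x_{i}<0$ for a positive point; this is case (a) of Proposition \ref{pro8}, so $x_{i}$ lies on the misclassified side of $H$ and is misclassified. I would add the remark that the condition $\zeta_{i}=1$ in the definition of $\Psi_{3}$ is consistent with, and in fact forced by, $z_{i}>0$ via the complementarity $\mathbf{0}\le z\perp\mathbf{1}-\zeta\ge\mathbf{0}$, though it plays no role in the geometric conclusion.

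There is essentially no hard step here: the proof is a direct bookkeeping exercise once the identity $(AB^{\top}\alpha)_{i}=y_{i}(x_{i}^{\top}w^{t})$ is in place. The only point that needs genuine care is precisely that bookkeeping — keeping track of the block-diagonal structure of $A$ and $B$ and of which fold a given index $i\in Q_{u}$ belongs to — together with the (routine) observation that applying Proposition \ref{pro8} presupposes $w^{t}=(B^{t})^{\top}\alpha^{t}$ solves the $t$-th lower-level problem, which is guaranteed because $(\alpha,\xi,C)$ satisfies (\ref{eqnew10}) as part of being feasible for (\ref{eq15}).
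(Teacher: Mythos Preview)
Your proposal is correct and follows essentially the same approach as the paper's own proof: both reduce each case to the sign of $y_{i}(x_{i}^{\top}w^{t})$ via the identity $(AB^{\top}\alpha+z)_{i}=y_{i}(x_{i}^{\top}w^{t})+z_{i}$ obtained from $w=B^{\top}\alpha$, and then invoke the corresponding item of Proposition~\ref{pro8}. Your additional remarks on the block-diagonal bookkeeping and on why Proposition~\ref{pro8} is applicable are sound but go slightly beyond what the paper spells out.
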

\begin{proof} We take positive points for example. The same analysis can be applied to negative ones. Since $w=B^{\top} \alpha$ in (\ref{eq_lowerKKTc}), we get $(A B^{\top} \alpha +z)_{i}=(A w +z)_{i}$.
\begin{itemize}
\item [{\rm (a)}] For the points $\{x_i\}_{i\in \Psi_{1}}$, since $z_{i}=0$ in {\rm (\ref{eqPsi1})}, we have $(A w+z)_{i}=(Aw)_{i}=0$, that is, $y_i(w^{\top} x_i)=0$. For a positive point, $y_{i}=1$, it implies that $w^{\top} x_i=0$. It corresponds to (b) in Proposition \ref{pro8}. Therefore, it means that the point $x_i$ lies on the separating hyperplane $H$. It is correctly classified.
\item[{\rm (b)}] For the points $\{x_i\}_{i\in \Psi_{2}}$, since $z_{i}=0$ in {\rm (\ref{eq7b})}, we have $(Aw+z)_{i}=(Aw)_{i}>0$, that is, $y_i(w^{\top} x_i)>0$. For a positive point, $y_{i}=1$, it implies that $w^{\top} x_i>0$. It corresponds to (c) in Proposition \ref{pro8}. Therefore, it means that the point $x_i$ lies on the correctly classified side of the separating hyperplane $H$. It is correctly classified.
\item[{\rm (c)}]
  For the points $\{x_i\}_{i\in \Psi_{3}}$, since $z_{i}>0$ in {\rm (\ref{eq7d})}, we have $(Aw)_{i}<0$, that is, $y_{i}(w^{\top} x_{i})<0$. For a positive point, $y_{i}=1$, it implies that $w^{\top} x_i<0$. It corresponds to (a) in Proposition \ref{pro8}. Therefore, it means that the point $x_i$ lies on the misclassified side of the separating hyperplane $H$.
\end{itemize}
%
\end{proof}
In Section \ref{sec4},  Proposition \ref{pro9} will be combined with Proposition \ref{pro7} to prove Proposition \ref{pro4}. It might also be important to note that if a validation point $x_{i}$ lies on the separating hyperplane $H$, then we will have $0 \leq \zeta_{i} < 1$.

 \subsection{The global relaxation method (GRM)}\label{sec6}
Here, we present a numerical algorithm to solve the MPEC \eqref{eq15}. There are various methods for solving MPECs, we refer to \cite{dempe2003annotated,luo1996mathematical} for some surveys on the problem and to \cite{jane2005necessary,flegel2005constraint,wu2015inexact,harder2021reformulation,guo2015solving,jara2018study,judice2012algorithms,li2015superlinearly,yu2019solving,dempe2003annotated,anitescu2000solving,facchinei2007finite,fletcher2006local,fukushima2002implementable} for some of the latest methods to solve the problem. Among methods to solve MPECs, one of the most popular ones is the relaxation method due to Scholtes \cite{scholtes2001convergence}. Recently, Kanzow et al. \cite{hoheisel2013theoretical} provided comparisons of five relaxation methods for solving MPECs, where it appears that the GRM has the best theoretical (in terms of requiring weaker assumptions for convergence) and numerical performance. Therefore, we will apply the GRM to solve our MPEC \eqref{eq15}.

To simplify the presentation of the method, we now write problem \eqref{eq15}  {into} further compact format. Let $v = \left[C,\ \zeta^{\top},\ z^{\top},\ \alpha^{\top},\ \xi^{\top}\right]^{\top}  \in  \mathbb{R}^{\overline{m}+1}$
with $\overline{m}= 2 T (m_{1}+m_{2})$
and define the functions
\be \label{eqe1}
F(v)\! = \!M^{\top}v, \;\; G(v)\!= \!Pv+a, \; \mbox{ and }\; H(v) \!= \!Qv,
\ee
where
 $$
  M\!=\!\frac{1}{T m_{1}} \left[\begin{array}{c} 0\\ \mathbf{1}_{T m_1} \\ \mathbf{0}_{T m_1 } \\ \mathbf{0}_{T m_2 } \\ \mathbf{0}_{T m_2 } \end{array}\right] \! \in \! \mathbb{R}^{\overline{m}+1},\ a \! = \! \left[\begin{array}{l} \mathbf{0}_{T m_1} \\ \mathbf{1}_{T m_1} \\-\mathbf{1}_{T m_2} \\ \mathbf{0}_{T m_2} \end{array}\right] \! \in \! \mathbb{R}^{\overline{m}},\ Q \!= \!\left[\begin{array}{cc} \mathbf{0}_{\overline{m}}& I_{\overline{m}} \end{array}\right] \! \in \! \mathbb{R}^{\overline{m} \times (\overline{m}+1)},
  $$
  $$ P \!= \!\left[\begin{array}{ccccc}\mathbf{0}_{T m_1 }& \mathbf{0}_{T m_1 \times T m_1} & I_{T m_1} & A B^{\top} & \mathbf{0}_{T m_1 \times T m_2} \\ \mathbf{0}_{T m_1} &-I_{T m_1 } & \mathbf{0}_{T m_1 \times T m_1} & \mathbf{0}_{T m_1 \times T m_2} & \mathbf{0}_{T m_1 \times T m_2}\\ \mathbf{0}_{T m_2} &\mathbf{0}_{T m_2 \times T m_1} & \mathbf{0}_{T m_2 \times T m_1} & B B^{\top} & I_{ T m_2} \\
\mathbf{1}_{T m_2 } &\mathbf{0}_{T m_2 \times T m_1} & \mathbf{0}_{T m_2 \times T m_1} & -I_{T m_2 } & \mathbf{0}_{T m_2 \times T m_2 }\end{array}\right]  \! \in \! \mathbb{R}^{\overline{m} \times (\overline{m}+1)}. $$
Problem (\ref{eq15}) can then be written in the form
\be \label{eq16}
\begin{aligned}
& \min_{v \in \mathbb{R}^{\overline{m}+1}} F(v)\\
& \quad \hbox{s.t.}\ \  \mathbf{0} \leq  H(v)  \perp  G(v)  \geq \mathbf{0}.
\end{aligned}
\ee

The basic idea of the GRM is as follows. Let $\{t_{k}\} \downarrow 0$. At each iteration, we replace the MPEC (\ref{eq16}) by the nonlinear program (NLP) of the following form, parameterized in $t_k$:
\be \label{eq50}
\begin{aligned}
& \min_{v}\ F(v)\\
& \begin{array}{ll}
\hbox{s.t.}\ \ G_{i}(v) \geq 0 \quad  \forall \ i=1,\ \cdots,\ \overline{m},\\
\quad \quad H_{i}(v) \geq 0 \quad  \forall \ i=1,\ \cdots,\ \overline{m},\\
\quad \quad G_{i}(v)H_{i}(v) \leq t_{k} \quad  \forall \ i=1,\ \cdots,\ \overline{m}.
  \end{array}
\end{aligned} \tag{NLP-$t_{k}$}
\ee
The details of the GRM are shown in Algorithm \ref{alo2}.
\begin{algorithm}
 \caption{The Global Relaxation Method (GRM) $(v_0,\ t_0,\ \sigma,\ t_{min})$}\label{alo2}
\begin{algorithmic}[1]
\State {\bf Require} a starting vector $v_0$, an initial relaxation parameter $t_0$, and parameters $\sigma \in  (0, 1),\ t_{\min} > 0$.
\State Set $k:=0$.
\State {\bf while} {$t_k> t_{\min}$} {\bf do}
\State Find an approximate solution $v^{k+1}$ of the relaxed problem (\ref{eq50}) \quad using $v^{k}$ as a starting point.
\State Let $t_{k+1} \leftarrow \sigma \cdot t_{k}$ and $k \leftarrow k+1$.
\State {\bf end while}
\State {\bf Return} the final iterate $v_{opt} := v^{k}$, the corresponding function value $F (v_{opt} )$, and the maximum constraint violation Vio($v_{opt}$).
\end{algorithmic}
\end{algorithm}

\noindent Here, the maximum violation of all constraints Vio defined by
\be \label{eqc1}
\operatorname{Vio}\left(v_{o p t}\right)=\|\min\{G(v_{opt}),\ H(v_{opt}) \}\|_{\infty}
\ee
is used to measure the feasibility of the final iterate $v_{opt}$, where $\|\cdot \|_{\infty}$ denotes the $l_{\infty}$ norm.
We use the GRM in Algorithm \ref{alo2} to solve the MPEC (\ref{eq16}), and get the optimal hyperparameter $C$ and  the corresponding function value $F(v_{opt})$ which is the cross-validation error (CV error) measured on the validation sets in T-fold cross-validation.
To analyze the convergence of the GRM, we need the concept of C-stationarity, which we define next.

To proceed, let $v$ be a feasible point for the MPEC $(\ref{eq16})$ and recall that $F(v), G(v)$ and $H(v)$ defined in (\ref{eqe1}). Based on $v$, let
$$
\begin{aligned}
& I_{G}&:= \;\; & \{i \ \mid \ G_{i}(v)=0,\ H_{i}(v)>0 \}, \\
& I_{GH}&:=\;\;  & \{i \ \mid \ G_{i}(v)=0,\ H_{i}(v)=0 \},\\
& I_{H}&:=\;\;  & \{i \ \mid \ G_{i}(v)>0,\ H_{i}(v)=0 \}.
\end{aligned}
$$
\begin{definition}{\rm (C-stationarity)}
Let $v$ be a feasible point for the MPEC {\rm (\ref{eq16})}. Then $v$ is said to be a C-stationary point, if there are multipliers $\gamma,\ \nu \in \mathbb{R}^{\overline{m}}$, such that
$$
\nabla F\left(v\right)-\sum_{i=1}^{\overline{m}} \gamma_{i} \nabla G_{i}\left(v\right) -\sum_{i=1}^{\overline{m}} \nu_{i} \nabla H_{i}\left(v\right)=0,
$$
and $\gamma_{i}=0$ for $ i \in I_{H},\ \nu_{i}=0$ for $i \in I_{G}$, and $\gamma_{i} \nu_{i} \geq 0$ for $i \in I_{GH}$.
\end{definition}
Note that for problem \eqref{eq16}, C-stationarity holds at any local optimal solution that satisfies the MPEC-MFCQ, which can be defined as follows \cite{hoheisel2013theoretical}.

\begin{definition}\label{pro1}
 A feasible point $v$ for problem $(\ref{eq16})$ satisfies the MPEC-MFCQ if and only if the set of gradient vectors
\be \label{eq22}
\begin{aligned}
\left\{\nabla G_{i}\left(v\right) \mid i \in  I_{G} \cup I_{GH} \right\}
\cup \left\{\nabla H_{i}\left(v\right) \mid i \in  I_{H} \cup I_{GH}\right\}
\end{aligned}
\ee
is positive-linearly independent.
\end{definition}
Recall that the set of gradient vectors in {\rm (\ref{eq22})} is said to be positive-linearly \emph{dependent} if there exist scalars $\{\delta_{i}\}_{i \in I_{G}  \cup I_{GH}}$ and $ \{\beta_{i}\}_{i \in I_{H} \cup I_{GH}}$ with $\delta_{i} \geq 0$ for $i \in I_{G}  \cup I_{GH}$, $\beta_{i} \geq 0$ for $i \in I_{H} \cup I_{GH}$, not all of them being zero, such that $\Sigma_{i \in I_{G}  \cup I_{GH}} \delta_{i} \nabla G_{i}(v)+\Sigma_{i \in I_{H}  \cup I_{GH}} \beta_{i} \nabla H_{i}(v)=0$. Otherwise, we say that this set of gradient vectors is positive-linearly \emph{independent}.

Also note that various other stationarity concepts can be defined for problem \eqref{eq16}; for more details on this, interested readers are referred to \cite{dempe2012karush,flegel2005constraint}.

The following result establishes the well-definiteness  of Algorithm \ref{alo2}, as it provides a framework ensuring that a solutions (or a stationary points, to be precise) exist for problem {\rm (\ref{eq50})} as required.
\begin{theorem}\label{thr2}{\rm \cite{hoheisel2013theoretical}}
Let $v$ be a feasible point for the MPEC $(\ref{eq16})$ such that MPEC-MFCQ is satisfied at $v$. Then there exists a neighborhood $N$ of $v$ and $\overline{t} > 0$ such that standard MFCQ for {\rm (\ref{eq50})} at $t_k=t$ is satisfied at all feasible points of {\rm (\ref{eq50})} at $t_k=t$ in this neighborhood $N$ for all $t \in (0,\ \overline{t})$.
\end{theorem}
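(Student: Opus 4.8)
The plan is to follow the standard argument behind Scholtes-type relaxation schemes \cite{scholtes2001convergence,hoheisel2013theoretical} and to argue by contradiction, using that the relaxation parameter forces the products $G_i H_i$ towards zero. Suppose the conclusion fails: then there exist sequences $t_\ell\downarrow 0$ and $v^\ell\to v$ such that each $v^\ell$ is feasible for (\ref{eq50}) at $t_k=t_\ell$ but standard MFCQ is violated at $v^\ell$. Since there are only finitely many constraints, I would first pass to a subsequence along which the active set of (\ref{eq50}) at $v^\ell$ is constant; write $\mathcal{A}_G$, $\mathcal{A}_H$, $\mathcal{A}_{GH}$ for the sets of indices $i$ with $G_i(v^\ell)=0$, with $H_i(v^\ell)=0$, and with $G_i(v^\ell)H_i(v^\ell)=t_\ell$, respectively. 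By continuity and after shrinking the neighbourhood, $i\in\mathcal{A}_G$ forces $G_i(v)=0$ (so $i\in I_G\cup I_{GH}$), $i\in\mathcal{A}_H$ forces $H_i(v)=0$ (so $i\in I_H\cup I_{GH}$), while $i\in\mathcal{A}_{GH}$ forces $G_i(v^\ell),H_i(v^\ell)>0$ and, since $G_i(v^\ell)H_i(v^\ell)=t_\ell\to 0$, also $G_i(v)H_i(v)=0$; I would further split $\mathcal{A}_{GH}$ according to whether $G_i(v^\ell)\to 0$, $H_i(v^\ell)\to 0$, or both.

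Next I would extract multipliers witnessing the failure of MFCQ: since MFCQ fails at $v^\ell$, the active constraint gradients of (\ref{eq50}) are positive-linearly dependent, so there exist scalars $\lambda_i^\ell\ge 0$ $(i\in\mathcal{A}_G)$, $\mu_i^\ell\ge 0$ $(i\in\mathcal{A}_H)$, $\rho_i^\ell\ge 0$ $(i\in\mathcal{A}_{GH})$, not all zero, with
\[
\sum_{i\in\mathcal{A}_G}\lambda_i^\ell\nabla G_i(v^\ell)+\sum_{i\in\mathcal{A}_H}\mu_i^\ell\nabla H_i(v^\ell)-\sum_{i\in\mathcal{A}_{GH}}\rho_i^\ell\bigl(H_i(v^\ell)\nabla G_i(v^\ell)+G_i(v^\ell)\nabla H_i(v^\ell)\bigr)=0 .
\]
Collecting the scalar multiplying each $\nabla G_i(v^\ell)$ and each $\nabla H_i(v^\ell)$ gives an ``effective'' multiplier vector; using that an index cannot lie in both $\mathcal{A}_G$ and $\mathcal{A}_{GH}$ (a point with $G_i(v^\ell)=0$ has $G_i(v^\ell)H_i(v^\ell)=0\neq t_\ell$), and similarly for $\mathcal{A}_H$ and $\mathcal{A}_{GH}$, one checks that this effective vector is still nonzero, hence may be normalised to unit norm and, along a further subsequence, converges.

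Passing to the limit should then produce a nontrivial relation $\sum_{i\in I_G\cup I_{GH}}\tilde\lambda_i\nabla G_i(v)+\sum_{i\in I_H\cup I_{GH}}\tilde\mu_i\nabla H_i(v)=0$ in which the signs of the coefficients are under control — those inherited from $\mathcal{A}_G$ and $\mathcal{A}_H$ are nonnegative, and those inherited from $\mathcal{A}_{GH}$ have the sign forced by the relaxed-complementarity gradients in the limit — which contradicts the positive-linear independence of the gradient family in Definition \ref{pro1}, i.e. MPEC-MFCQ at $v$. The step I expect to be the main obstacle is precisely this passage to the limit in the $\rho_i^\ell$-terms: for an index that is biactive in the limit ($G_i(v)=H_i(v)=0$) the multiplier $\rho_i^\ell$ itself may be unbounded, so one must exploit that the products $\rho_i^\ell G_i(v^\ell)$ and $\rho_i^\ell H_i(v^\ell)$ remain bounded — a consequence of the normalisation together with $G_i(v^\ell)H_i(v^\ell)=t_\ell$ — and carry out the sign bookkeeping carefully enough that the limiting coefficients are simultaneously nontrivial and incompatible with the cone appearing in the definition of MPEC-MFCQ.

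Finally, I would note that our concrete instance is considerably lighter than the general case: since $G(v)=Pv+a$ and $H(v)=Qv$ are affine (see (\ref{eqe1})), the gradients $\nabla G_i$ and $\nabla H_i$ are the constant rows of $P$ and $Q$, so every continuity estimate on the gradients is trivial and only the combinatorial bookkeeping of the active sets and the behaviour of the multiplier sequences has to be carried out.
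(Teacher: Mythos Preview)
The paper does not give its own proof of Theorem~\ref{thr2}; it simply quotes the result from \cite{hoheisel2013theoretical}. So there is nothing to compare against on the paper's side. Your sketch is precisely the standard contradiction argument used in that reference (Scholtes' original paper \cite{scholtes2001convergence} and the systematic treatment in \cite{hoheisel2013theoretical}): assume MFCQ fails along a sequence, extract positive-linearly-dependent multipliers for the active constraints of (\ref{eq50}), normalise, pass to the limit, and contradict MPEC-MFCQ at $v$. Your observation that the affine structure of $G$ and $H$ in (\ref{eqe1}) makes all gradient-continuity issues trivial is correct and worth recording.

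One genuine point of friction you should be aware of, beyond what you already flag: with the paper's Definition~\ref{pro1}, which asks for \emph{positive}-linear independence with all coefficients nonnegative, the limiting coefficients that come from $\mathcal{A}_{GH}$ are $-\lim \rho_i^\ell H_i(v^\ell)\le 0$ and $-\lim \rho_i^\ell G_i(v^\ell)\le 0$, i.e.\ they land on the \emph{wrong} side of the cone in that definition, so the contradiction does not close as stated. In \cite{hoheisel2013theoretical} this is not an issue because the standard MPEC-MFCQ they use treats the biactive gradients $\{\nabla G_i,\nabla H_i:i\in I_{GH}\}$ as equality-type constraints with \emph{free-sign} multipliers (equivalently, the tightened NLP has these as equalities), and for indices landing in $I_G$ or $I_H$ one shows the offending limiting coefficient is actually zero. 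If you intend to carry out the proof rather than cite it, you should either (i) work with the standard MPEC-MFCQ from \cite{hoheisel2013theoretical} and then observe that it is implied by the paper's positive-linear-independence condition only when the latter is strengthened appropriately, or (ii) note that in the present affine setting Theorem~\ref{thr3} in fact establishes more than Definition~\ref{pro1} literally requires---the row family in (\ref{eq24}) is shown to admit no nontrivial \emph{nonnegative} combination, and the same block-triangular argument rules out the mixed-sign combinations that your limit produces---so the contradiction can still be closed for this specific MPEC.
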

Subsequently, we have the following convergence result, which ensures that a sequence of stationary points of problem (\ref{eq50}), computed by Algorithm \ref{alo2}, converges to a C-stationary point of problem \eqref{eq16}.
\begin{theorem} \label{thr1}{\rm \cite{hoheisel2013theoretical}}
Let $\{t_{k}\} \downarrow 0$ and let $v^k$ be a stationary point of {\rm (NLP-$t_{k}$)} with $v^{k} \rightarrow v$ such that MPEC-MFCQ holds at the feasible point $v$. Then $v$ is a C-stationary point of the MPEC $(\ref{eq16})$.
\end{theorem}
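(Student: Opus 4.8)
The plan is to follow the classical convergence analysis of the Scholtes relaxation \cite{scholtes2001convergence}, specialised to the affine data of \eqref{eq16}. Recall from \eqref{eqe1} that $F(v)=M^{\top}v$, $G(v)=Pv+a$ and $H(v)=Qv$, so $\nabla F$, $\nabla G_{i}$ and $\nabla H_{i}$ are constant vectors; this removes all continuity-of-gradients technicalities from the limiting arguments. First I would observe that the limit $v$ is automatically feasible for \eqref{eq16}: letting $k\to\infty$ in the feasibility relations $G_{i}(v^{k})\geq 0$, $H_{i}(v^{k})\geq 0$, $G_{i}(v^{k})H_{i}(v^{k})\leq t_{k}$ and using $t_{k}\downarrow 0$ gives $G_{i}(v)\geq 0$, $H_{i}(v)\geq 0$, $G_{i}(v)H_{i}(v)=0$, so that $\{1,\dots,\overline{m}\}=I_{G}\cup I_{GH}\cup I_{H}$. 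Next I would write the KKT conditions of (NLP-$t_{k}$) at the stationary point $v^{k}$, with nonnegative multipliers $\gamma^{k},\nu^{k},\delta^{k}\in\mathbb{R}^{\overline{m}}$ attached to the constraints $G_{i}\geq 0$, $H_{i}\geq 0$ and $G_{i}H_{i}\leq t_{k}$ respectively; using $\nabla(G_{i}H_{i})(v^{k})=H_{i}(v^{k})\nabla G_{i}+G_{i}(v^{k})\nabla H_{i}$ and regrouping gives
\[
\nabla F(v)=\sum_{i=1}^{\overline{m}}\widehat{\gamma}^{k}_{i}\nabla G_{i}(v)+\sum_{i=1}^{\overline{m}}\widehat{\nu}^{k}_{i}\nabla H_{i}(v),\qquad \widehat{\gamma}^{k}_{i}:=\gamma^{k}_{i}-\delta^{k}_{i}H_{i}(v^{k}),\quad \widehat{\nu}^{k}_{i}:=\nu^{k}_{i}-\delta^{k}_{i}G_{i}(v^{k}),
\]
the constant gradients already being evaluated at $v$. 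Complementary slackness gives $\gamma^{k}_{i}G_{i}(v^{k})=0$ and $\nu^{k}_{i}H_{i}(v^{k})=0$, hence the key sign identity $\widehat{\gamma}^{k}_{i}\widehat{\nu}^{k}_{i}=\gamma^{k}_{i}\nu^{k}_{i}+(\delta^{k}_{i})^{2}G_{i}(v^{k})H_{i}(v^{k})\geq 0$ for every $i$. Finally, for $k$ large and $i\in I_{H}$ one has $G_{i}(v^{k})>0$, hence $\gamma^{k}_{i}=0$, and either $\delta^{k}_{i}=0$ (so $\widehat{\gamma}^{k}_{i}=0$) or the relaxed constraint is active, whence $H_{i}(v^{k})=t_{k}/G_{i}(v^{k})$, $\nu^{k}_{i}=0$ and $|\widehat{\gamma}^{k}_{i}|=(t_{k}/G_{i}(v^{k})^{2})\,|\widehat{\nu}^{k}_{i}|\leq C_{i}t_{k}\,|\widehat{\nu}^{k}_{i}|$ with $C_{i}$ independent of $k$; symmetrically $\nu^{k}_{i}=0$ and $|\widehat{\nu}^{k}_{i}|\leq C_{i}t_{k}\,|\widehat{\gamma}^{k}_{i}|$ for $i\in I_{G}$.

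The core of the proof is to show that $\{(\widehat{\gamma}^{k},\widehat{\nu}^{k})\}$ is bounded, and this is exactly where MPEC-MFCQ at $v$ (Definition \ref{pro1}) is used. I would argue by contradiction: if this sequence were unbounded, set $\rho_{k}:=\|(\widehat{\gamma}^{k},\widehat{\nu}^{k})\|_{\infty}\to\infty$, pass to a subsequence along which the active set of (NLP-$t_{k}$) is constant and $(\widehat{\gamma}^{k},\widehat{\nu}^{k})/\rho_{k}\to(\bar{\gamma},\bar{\nu})$ with $\|(\bar{\gamma},\bar{\nu})\|_{\infty}=1$. Dividing the stationarity identity by $\rho_{k}$ and letting $k\to\infty$ yields $\sum_{i}\bar{\gamma}_{i}\nabla G_{i}(v)+\sum_{i}\bar{\nu}_{i}\nabla H_{i}(v)=0$; dividing the ratio estimates by $\rho_{k}$ forces $\bar{\gamma}_{i}=0$ on $I_{H}$ and $\bar{\nu}_{i}=0$ on $I_{G}$; and dividing $\widehat{\gamma}^{k}_{i}\widehat{\nu}^{k}_{i}\geq 0$ by $\rho_{k}^{2}$ forces $\bar{\gamma}_{i}\bar{\nu}_{i}\geq 0$ on $I_{GH}$. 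Thus $(\bar{\gamma},\bar{\nu})\neq 0$ is supported on $I_{G}\cup I_{GH}$ in the $\bar{\gamma}$-components and on $I_{H}\cup I_{GH}$ in the $\bar{\nu}$-components, providing a nontrivial certificate that contradicts the positive-linear independence of the gradient family in \eqref{eq22}, i.e., MPEC-MFCQ. (I would also invoke Theorem \ref{thr2} to make sure the multipliers $(\gamma^{k},\nu^{k},\delta^{k})$ exist: for $k$ large, $v^{k}$ lies in the neighbourhood $N$ and $t_{k}<\overline{t}$, so (NLP-$t_{k}$) satisfies standard MFCQ at $v^{k}$.)

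Granting boundedness, I would extract a convergent subsequence $(\widehat{\gamma}^{k},\widehat{\nu}^{k})\to(\gamma,\nu)$ and pass to the limit in the three ingredients above: the stationarity identity gives $\nabla F(v)-\sum_{i}\gamma_{i}\nabla G_{i}(v)-\sum_{i}\nu_{i}\nabla H_{i}(v)=0$; the ratio estimates give $\gamma_{i}=0$ for $i\in I_{H}$ and $\nu_{i}=0$ for $i\in I_{G}$; and the limit of $\widehat{\gamma}^{k}_{i}\widehat{\nu}^{k}_{i}\geq 0$ gives $\gamma_{i}\nu_{i}\geq 0$ for $i\in I_{GH}$. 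These are precisely the defining conditions of a C-stationary point of \eqref{eq16}, so $v$ is C-stationary. The step I expect to be the main obstacle is the boundedness claim: the product-constraint multipliers $\delta^{k}_{i}$ need not remain bounded, so one cannot argue directly with $(\gamma^{k},\nu^{k},\delta^{k})$; the delicate part is the sign-and-support bookkeeping for the normalized limit $(\bar{\gamma},\bar{\nu})$ across the finitely many possible limiting active-set configurations of (NLP-$t_{k}$), which is what converts unboundedness into a violation of MPEC-MFCQ.
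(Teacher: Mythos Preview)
The paper does not give its own proof of this theorem: it is quoted directly from Hoheisel, Kanzow and Schwartz \cite{hoheisel2013theoretical} and stated without argument, so there is no in-paper proof to compare against. Your proposal is a faithful reconstruction of the standard convergence proof for the Scholtes relaxation as it appears in that reference, specialised to the affine data of \eqref{eq16}. The overall architecture---feasibility of the limit, writing the KKT system of (NLP-$t_k$) and regrouping into $(\widehat{\gamma}^{k},\widehat{\nu}^{k})$, the sign identity $\widehat{\gamma}^{k}_{i}\widehat{\nu}^{k}_{i}\geq 0$, boundedness of the regrouped multipliers via a normalisation argument that terminates in a contradiction to MPEC-MFCQ, and passage to the limit to obtain the C-stationarity conditions---is exactly the argument in \cite{hoheisel2013theoretical}, and your use of Theorem~\ref{thr2} to secure existence of the KKT multipliers for large $k$ is the right hook.

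One caveat worth flagging, though it is a mismatch in the paper rather than a flaw in your argument: the limit $(\bar{\gamma},\bar{\nu})$ produced by your boundedness contradiction has $\bar{\gamma}_{i}$ free-signed on $I_{G}$, $\bar{\nu}_{i}$ free-signed on $I_{H}$, and only $\bar{\gamma}_{i}\bar{\nu}_{i}\geq 0$ on $I_{GH}$. This contradicts the standard MPEC-MFCQ used in \cite{hoheisel2013theoretical}, but it does \emph{not} literally contradict the paper's Definition~\ref{pro1}, which asks for positive-linear independence with \emph{all} coefficients nonnegative (a weaker requirement). Your proof is correct relative to the MPEC-MFCQ notion actually needed for Theorem~\ref{thr1}; the discrepancy lies in the paper's stated definition, not in your reasoning.
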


Clearly, the MPEC-MFCQ is crucial for the analysis of problem \eqref{eq16}, as it not only ensures that the C-stationarity condition can hold at a locally optimal point, but also helps in establishing the two fundamental results in Theorems \ref{thr2} and \ref{thr1}. Considering this importance of the condition, we carefully analyze it in the next section, and show, in particular, that it automatically holds at any feasible point of problem $(\ref{eq16})$.

\section{Fulfilment of the MPEC-MFCQ}\label{sec4}

\noindent In this section, we prove that every point in the feasible set of the MPEC (\ref{eq16}) satisfies the MPEC-MFCQ. The rough idea of our proof is as follows. Firstly, by analyzing the relationship of different index sets (Proposition \ref{pro4}), we reach a reduced form of the MPEC-MFCQ (Proposition \ref{pro5}). Then based on the positive-linear independence of three submatrices (Lemma \ref{lem1}-Lemma \ref{lem3}), we eventually show the MPEC-MFCQ in Theorem \ref{thr3}. The roadmap of the proof is summarized in Figure \ref{liucheng}.
\begin{figure}[h]
	\centering
	\includegraphics[width=1\textwidth]{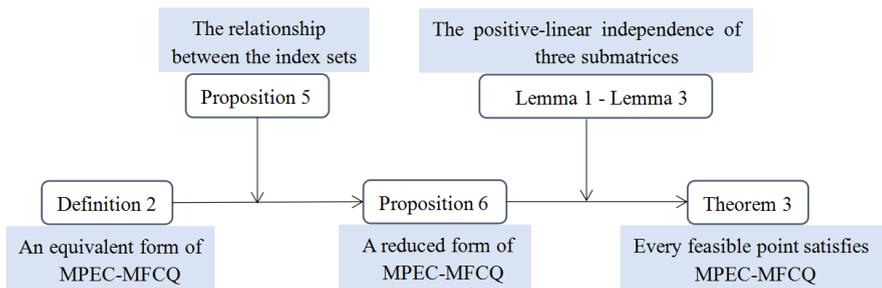}
	\caption{The roadmap of the proof of the MPEC-MFCQ.}\label{liucheng}
\end{figure}
\subsection{Relationships between the index sets}

In this part, we first explore more properties about the index sets $I_{H},\ I_{G},\ I_{GH}$, as they are key to the analysis of the positive-linear independence of the vectors in (\ref{eq22}).
  Let $I_{H}:=\underset{k=1} {\overset{4}{\cup}}I_{H_{k}},\ I_{G}:=\underset{k=1} {\overset{4}{\cup}}I_{G_{k}},$ and $I_{GH}:=\underset{k=1} {\overset{4}{\cup}}I_{GH_{k}}$, where
\begin{subequations} \label{eq23}
\begin{align}
& I_{H_{1}}\ \; := \; \{i \in Q_{u} \ \mid \ \zeta_{i}=0,\ (AB^{\top} \alpha +z)_{i}>0\}, \label{eq23a} \\
& I_{H_{2}} \ \; :=\; \{i \in Q_{u} \ \mid \ z_{i}=0,\ 1-\zeta_{i}>0\}, \label{eq23b} \\
& I_{H_{3}} \ \; := \; \{i \in Q_{l} \ \mid \ \alpha_{i}=0,\ (BB^{\top} \alpha -\mathbf{1}+\xi)_{i}>0\}, \label{eq23c}\\
& I_{H_{4}} \ \; := \; \{i \in Q_{l} \ \mid \ \xi_{i}=0,\ C-\alpha_{i}>0\}, \label{eq23d}\\
& I_{G_{1}} \ \; :=\; \{i \in Q_{u} \ \mid \ \zeta_{i}>0,\ (AB^{\top} \alpha +z)_{i}=0\},\label{eq23e}\\
& I_{G_{2}} \ \; :=\; \{i \in Q_{u} \ \mid \ z_{i}>0,\ 1-\zeta_{i}=0\},\label{eq23f}\\
& I_{G_{3}} \ \; :=\; \{i \in Q_{l} \ \mid \ \alpha_{i}>0,\ (BB^{\top} \alpha -\mathbf{1}+\xi)_{i}=0\},\label{eq23g}\\
& I_{G_{4}}\ \; :=\; \{i \in Q_{l} \ \mid \ \xi_{i}>0,\ C-\alpha_{i}=0\}, \label{eq23h}\\
& I_{GH_{1}} :=\; \{i \in Q_{u} \ \mid \ \zeta_{i}=0,\ (AB^{\top} \alpha +z)_{i}=0\},\label{eq23i}\\
& I_{GH_{2}} :=\; \{i \in Q_{u} \ \mid \ z_{i}=0,\ 1-\zeta_{i}=0\},\label{eq23j} \\
& I_{GH_{3}} :=\; \{i \in Q_{l} \ \mid \ \alpha_{i}=0,\ (BB^{\top} \alpha -\mathbf{1}+\xi)_{i}=0\}, \label{eq23k}\\
& I_{GH_{4}} :=\; \{i \in Q_{l} \ \mid \ \xi_{i}=0,\ C-\alpha_{i}=0\}.\label{eq23l}
\end{align}
\end{subequations}
Here, $Q_{u},\ Q_{l}$ are defined in (\ref{eqb1}) and (\ref{eqb2}), respectively. Furthermore, let
 {
$$ \setlength{\abovedisplayskip}{1.5pt} I^{k}:=I_{H_{k}}\cup I_{G_{k}} \cup I_{GH_{k}},\ k=1,\ 2,\ 3,\ 4.
\setlength{\belowdisplayskip}{1.5pt} $$}
It can be observed that each index set $I^{k},\ k=1,\ 2,\ 3,\ 4$ corresponds to the union of the three components in the partition involved in the corresponding part of the complementarity systems in (\ref{eq15}); that is,
\begin{itemize}
\item [{}]  Part 1: $I^{1}$ for the partition of the system $\mathbf{0} \leq  \zeta  \perp \ A B^{T} \alpha+z \geq \mathbf{0}$;
\item [{}]  Part 2: $I^{2}$ for the partition of the system  $\mathbf{0} \leq  z  \perp  \mathbf{1}-\zeta  \geq \mathbf{0}$;
\item [{}]  Part 3: $I^{3}$ for the partition of the system $\mathbf{0} \leq  \alpha  \perp  B B^{T} \alpha- \mathbf{1}+\xi  \geq \mathbf{0}$;
\item [{}]  Part 4: $I^{4}$ for the partition of the system $\mathbf{0} \leq  \xi  \perp  C \mathbf{1}-\alpha \geq \mathbf{0}$.
\end{itemize}
In the previous section, we have clarified a one-to-one correspondence between the index set of the validation points $Q_u$ in (\ref{eqb1}) and the complementary constraints in Part 1 and Part 2, respectively. It is clearly that $I^{1}=I^{2}=Q_{u}$. Similarly, we have $I^{3}=I^{4}=Q_{l}$.
\begin{figure}[htbp]
\centering
\subfigure[(a) Part 1.]{
\includegraphics[width=0.41\textwidth]{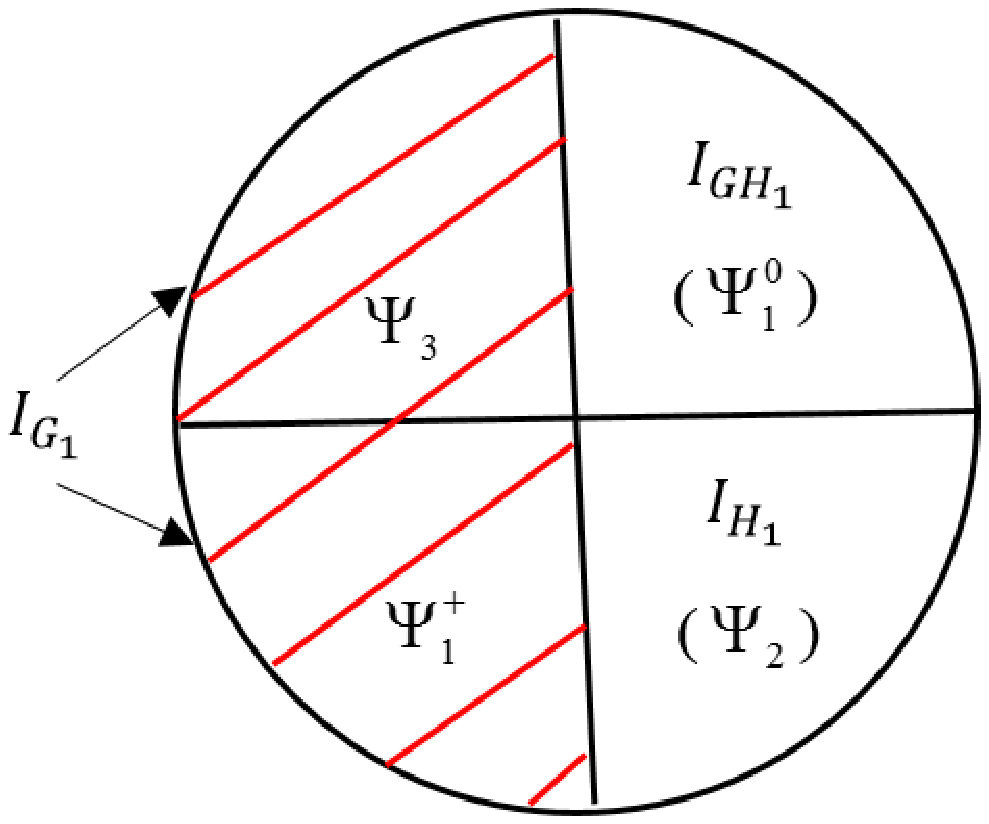}
}
\
\subfigure[(b) Part 2.]{
\includegraphics[width=0.35\textwidth]{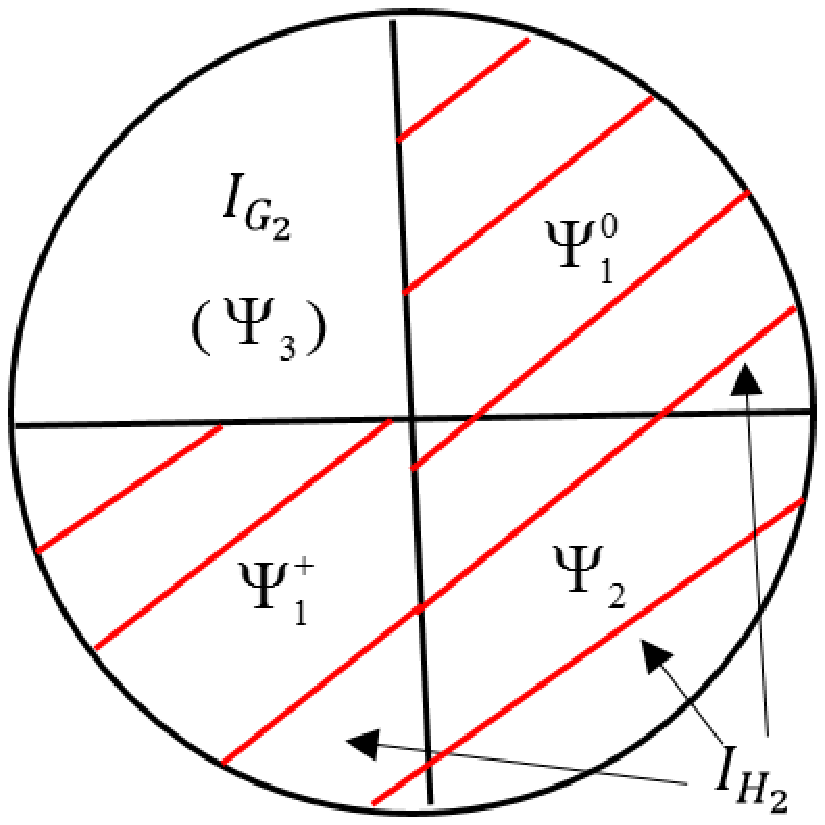}
}
\
\subfigure[(c) Part 3.]{
\hspace{0.1cm}
\includegraphics[width=0.39\textwidth]{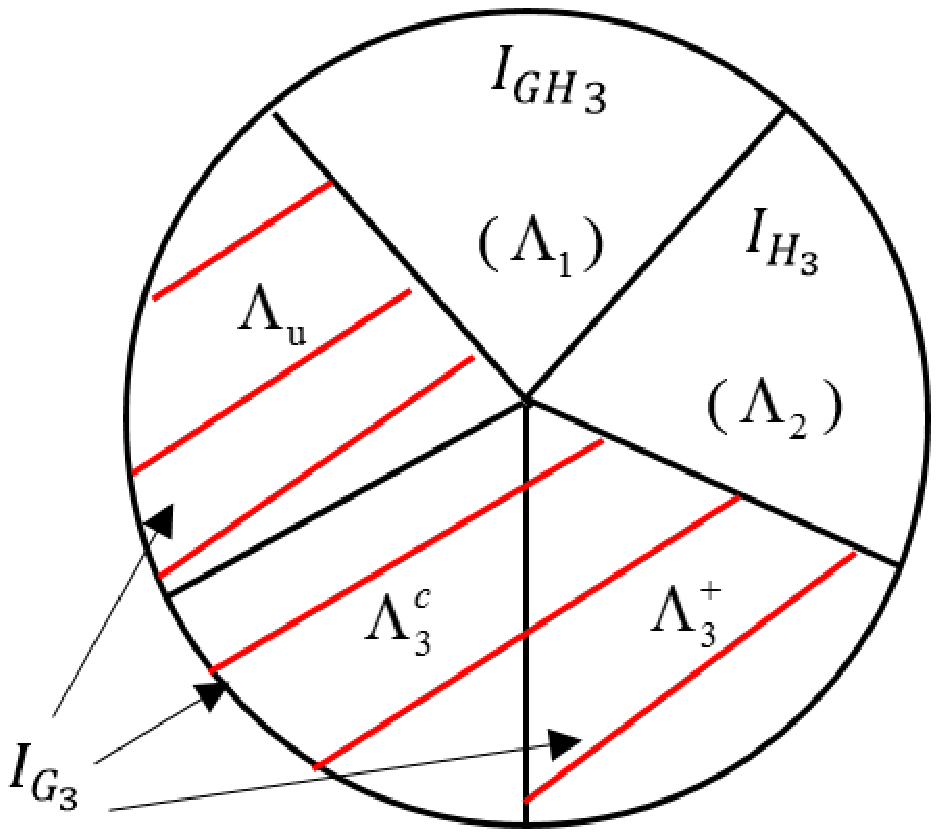}
}
\
\subfigure[(d) Part 4.]{
\hspace{0.2cm}
\includegraphics[width=0.43\textwidth]{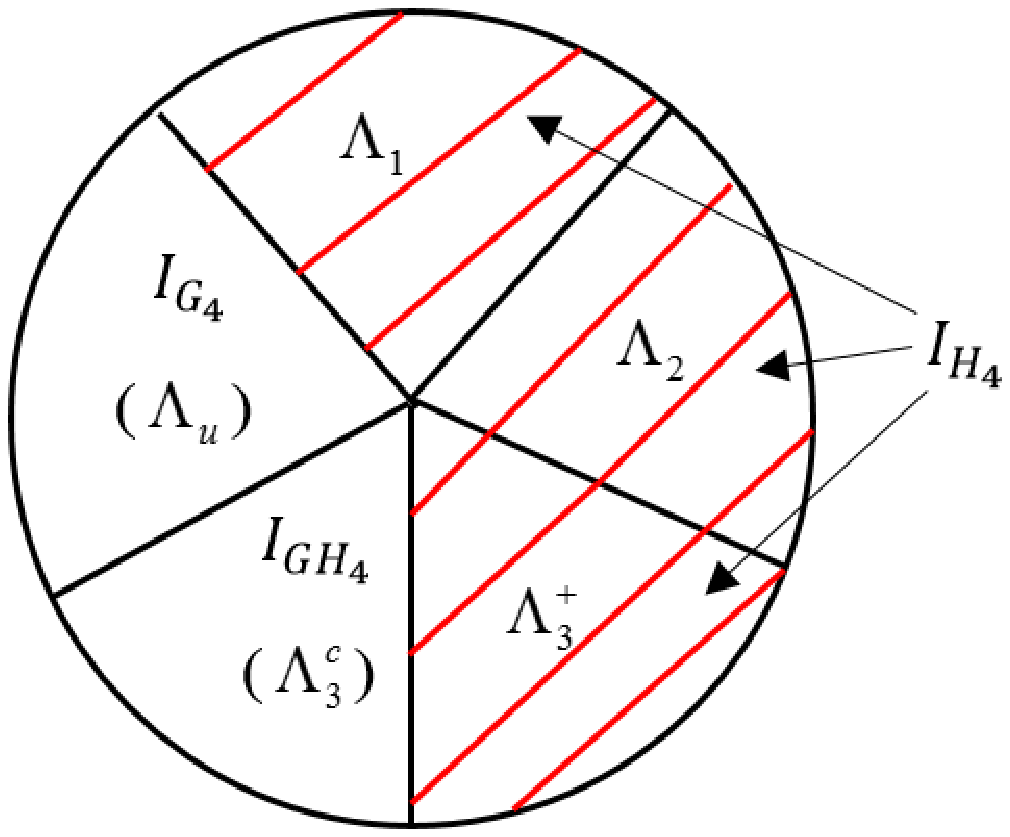}
}
\caption{The index sets corresponding to the complementarity constraints in Parts 1--4.}\label{fig_relation}
\end{figure}
Next, we give the relationships between the index sets in (\ref{eq23}); recall that we already have some index sets described in Propositions \ref{pro7} and \ref{pro9}. For the convenience of the analysis, we divide the index set $\Lambda_3$ in (\ref{eqLambda3}) into two subsets $\Lambda^{+}_{3}$ and $\Lambda^{c}_{3}$, as well as  $\Psi_{1}$ in (\ref{eqPsi1}) into  $\Psi^{0}_{1}$ and $\Psi^{+}_{1}$:
\begin{eqnarray}\label{eqdivideset}
  \Lambda^{+}_{3} &:=&\{i \in Q_{l}\ \mid \ 0<\alpha_{i}<C,\ (B B^{\top} \alpha-\mathbf{1}+\xi)_{i}=0,\ \xi_{i}=0\},  \label{eq6c}\\
 \Lambda^{c}_{3} &:=& \{i \in Q_{l}\ \mid \ \alpha_{i}=C,\ (B B^{\top} \alpha-\mathbf{1}+\xi)_{i}=0,\ \xi_{i}=0\}, \label{eq6d}\\
\Psi^{0}_{1} &:=& \{i \in Q_{u}\ \mid \ \zeta_{i}=0,\ (A B^{\top} \alpha+z)_{i}=0,\ z_{i}=0\}, \label{eq7a}\\
\Psi^{+}_{1}&:=& \{i \in Q_{u}\ \mid \ 0<\zeta_{i}<1,\ (A B^{\top} \alpha+z)_{i}=0,\ z_{i}=0\}. \label{eq7c}
\end{eqnarray}

\begin{proposition}\label{pro4}
The index sets in $(\ref{eq23})$ and the index sets in Proposition $\ref{pro7}$ and Proposition $\ref{pro9}$ have the following relationship:
\begin{itemize}
\item [{\rm (a)}] In Part $1,\ I_{H_{1}}=\Psi_{2},\ I_{G_{1}}=\Psi^{+}_{1} \cup \Psi_{3},\ I_{GH_{1}}=\Psi^{0}_{1}.$
\item [{\rm (b)}] In Part $2,\ I_{H_{2}}=\Psi_{1} \cup \Psi_{2},\ I_{G_{2}}=\Psi_{3},\ I_{GH_{2}}=\emptyset.$
\item [{\rm (c)}] In Part $3,\ I_{H_{3}}=\Lambda_{2},\ I_{G_{3}}=\Lambda_{3} \cup \Lambda_{u},\ I_{GH_{3}}=\Lambda_{1}.$
\item [{\rm (d)}] In Part $4,\ I_{H_{4}}=\Lambda_{1} \cup \Lambda_{2} \cup \Lambda^{+}_{3},\ I_{G_{4}}=\Lambda_{u},\ I_{GH_{4}}=\Lambda^{c}_{3}.$
\end{itemize}
Here, $\Lambda_{u}$ is defined as follows
\be \label{eqa4}
\Lambda_{u}:=\left\{i \in Q_{l}\ \mid \ \alpha_{i}=C,\ (B B^{\top} \alpha-\mathbf{1}+\xi)_{i}=0,\ \xi_{i}>0\right\}=\Lambda_{4} \cup \Lambda_{5} \cup \Lambda_{6}.
\ee
\end{proposition}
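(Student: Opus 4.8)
The plan is to prove all twelve set identities in (a)--(d) by direct, element-wise double inclusion, using nothing more than the defining complementarity relations in (\ref{eq15}) together with Propositions \ref{pro7} and \ref{pro9}. The structural observation that makes this routine is the following: each of $I_{H_k},I_{G_k},I_{GH_k}$ in (\ref{eq23}) is defined using only \emph{one} of the two complementarity systems associated with Part $k$ (for Part 1, only the signs of $\zeta_i$ and $(AB^{\top}\alpha+z)_i$ enter), whereas the sets $\Psi_j$, $\Lambda_j$ additionally fix the value of the ``partner'' variable ($z_i$ for Parts 1 and 2, $\xi_i$ for Parts 3 and 4). Hence every inclusion reduces to checking that the partner variable's value is \emph{forced} by the partner complementarity system.

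For Parts 1 and 2 the driving fact is $\mathbf{0}\le z\perp\mathbf{1}-\zeta\ge\mathbf{0}$: $\zeta_i=0$ gives $1-\zeta_i>0$, hence $z_i=0$ (this upgrades $I_{H_1}$ to $\Psi_2$ and $I_{GH_1}$ to $\Psi_1^{0}$, and sends the $\zeta_i=0$ part of the cell $\{(AB^{\top}\alpha+z)_i=0\}$ into $\Psi_1^{0}$), while $z_i>0$ forces $\zeta_i=1$ (this gives $I_{G_2}=\Psi_3$). Combining this with the characterization (\ref{eq57}) of $\zeta_i$ and Proposition \ref{pro9} — in particular, a validation point on $H$ has $\zeta_i<1$, and $z_i>0$ means the point is misclassified — yields $I_{G_1}=\Psi_1^{+}\cup\Psi_3$ and $I_{H_2}=\Psi_1\cup\Psi_2$, and makes $I_{GH_2}=\emptyset$, since $\zeta_i=1$ forces the point to be misclassified, hence $z_i>0$. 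The split $\Psi_1=\Psi_1^{0}\cup\Psi_1^{+}$ is introduced precisely to separate the $\zeta_i=0$ indices (which go to $I_{GH_1}$) from the $\zeta_i>0$ indices (which go to $I_{G_1}$).

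For Parts 3 and 4 the analogue is $\mathbf{0}\le\xi\perp C\mathbf{1}-\alpha\ge\mathbf{0}$ used with the standing assumption $C>0$: $\alpha_i=0$ gives $C-\alpha_i>0$, hence $\xi_i=0$ (turning $I_{H_3}$ into $\Lambda_2$ and $I_{GH_3}$ into $\Lambda_1$), while $\xi_i>0$ forces $\alpha_i=C$, and then the third complementarity relation $\alpha_i(BB^{\top}\alpha-\mathbf{1}+\xi)_i=0$ forces $(BB^{\top}\alpha-\mathbf{1}+\xi)_i=0$; this last point gives $I_{G_4}=\Lambda_u$ and, together with the trivial inclusion the other way, $I_{G_3}=\Lambda_3\cup\Lambda_u$. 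The delicate cell is $\{\alpha_i>0,\,(BB^{\top}\alpha-\mathbf{1}+\xi)_i=0,\,\xi_i=0\}$: I would split it into $\Lambda_3^{+}$ ($0<\alpha_i<C$, so $C-\alpha_i>0$, joining $I_{H_4}$) and $\Lambda_3^{c}$ ($\alpha_i=C$, so $C-\alpha_i=0$ with $\xi_i=0$, joining $I_{GH_4}$); together with $\Lambda_1,\Lambda_2\subseteq I_{H_4}$ this gives $I_{H_4}=\Lambda_1\cup\Lambda_2\cup\Lambda_3^{+}$ and $I_{GH_4}=\Lambda_3^{c}$. Proposition \ref{pro7} then supplies the stated geometric reading, but the identities themselves are the algebraic partition.

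I do not anticipate a genuine obstacle; the whole argument is bookkeeping. The one point that must not be glossed over is placing the ``tie'' indices correctly — a validation point lying exactly on $H$, and a training point whose multiplier hits the upper bound ($\alpha_i=C$) while $\xi_i=0$ — which is exactly why the refinements $\Psi_1^{0}/\Psi_1^{+}$ and $\Lambda_3^{+}/\Lambda_3^{c}$ are needed, why $C>0$ must remain in force so that $\alpha_i=0\Rightarrow\xi_i=0$, and why (\ref{eq57}) is invoked so that an on-$H$ validation point has $\zeta_i<1$. With those facts in place, each of the twelve equalities collapses to a two-line verification.
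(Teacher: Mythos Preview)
Your plan matches the paper's own proof: both argue by direct comparison of the definitions (\ref{eq23}) with (\ref{eq6a})--(\ref{eq6g}) and (\ref{eqPsi1})--(\ref{eq7d}), using the companion complementarity relation to pin down the ``extra'' coordinate ($z_i$ for Parts 1--2, $\xi_i$ for Parts 3--4). Your write-up is in fact more explicit than the paper's, which simply lists the matching index sets case by case without spelling out why the missing coordinate is forced.

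One step, however, is not airtight---and it is the same step the paper glosses over: the claim $I_{GH_2}=\emptyset$ (and, by the same token, the inclusion $I_{G_1}\subseteq\Psi_1^{+}\cup\Psi_3$ in the sub-case $\zeta_i=1$). You argue via (\ref{eq57}) that $\zeta_i=1$ forces the validation point to be misclassified, hence $z_i>0$. But (\ref{eq57}) describes one \emph{particular} optimizer of the LP (\ref{eq5}); it is not a consequence of the KKT system (\ref{eq8}) alone. If $(AB^{\top}\alpha)_i=0$ (the validation point lies exactly on $H$), then (\ref{eq8}) forces $z_i=0$ but leaves $\zeta_i$ free in $[0,1]$; choosing $\zeta_i=1$ produces a feasible point of (\ref{eq15}) with $i\in I_{GH_2}$ and $i\in I_{G_1}\setminus(\Psi_1^{+}\cup\Psi_3)$. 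The paper's proof handles this by writing ``there is no index set in Proposition~\ref{pro9} [that] corresponds to $I_{GH_{2}}$. Therefore, $I_{GH_{2}}=\emptyset$,'' which is no more rigorous. So your argument is as good as the paper's here; just be aware that parts (a) and (b) tacitly assume $\Psi_1\cup\Psi_2\cup\Psi_3=Q_u$, i.e.\ that the degenerate configuration $\zeta_i=1$, $z_i=0$, $(AB^{\top}\alpha)_i=0$ does not occur at the feasible point under consideration.
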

\vspace{-0.5cm}
\begin{proof}
According to the definition of the index sets in $(\ref{eq23})$ and the index sets in Proposition $\ref{pro7}$ and Proposition $\ref{pro9}$, we have the following analysis.
\begin{itemize}
\item [{\rm (a)}] In Part 1, for $i \in I_{H_{1}}$, compared with the index set $\Psi_{2}$ in {\rm (\ref{eq7b})}, it follows that we have $z_{i}=0$ and $I_{H_{1}}=\Psi_{2}$. For $i \in I_{G_{1}}$, compared with the index sets $\Psi^{+}_{1}$ in {\rm (\ref{eq7c})} and $\Psi_{3}$ in {\rm (\ref{eq7d})}, we get $0<\zeta_{i}<1,\ z_{i}=0$ or $\zeta_{i}=1,\ z_{i}>0$, and $I_{G_{1}}=\Psi^{+}_{1} \cup \Psi_{3}$. For $i \in I_{GH_{1}}$, compared with the index set $\Psi^{0}_{1}$ in {\rm (\ref{eq7a})}, we get $z_{i}=0$ and $I_{GH_{1}}=\Psi^{0}_{1}$.
\item [{\rm (b)}] In Part 2, for $i \in I_{H_{2}}$, compared with the index sets $\Psi_{1}$ in {\rm (\ref{eqPsi1})} and $\Psi_{2}$ in {\rm (\ref{eq7b})}, we get $I_{H_{2}}=\Psi_{1} \cup \Psi_{2}.$ For $i \in I_{G_{2}}$, compared with the index set  $\Psi_{3}$ in {\rm (\ref{eq7d})}, we get $(A B^{\top} \alpha+z)_{i}=0$ and $I_{G_{2}}=\Psi_{3}$. For $i \in I_{GH_{2}}$, there is no index set in Proposition \ref{pro9} corresponds to the index set $I_{GH_{2}}$. Therefore, $I_{GH_{2}}=\emptyset.$
\item [{\rm (c)}] In Part 3, for $i \in I_{H_{3}}$, compared with the index set $\Lambda_{2}$ in {\rm (\ref{eq6b})}, we get $\xi_{i}=0$ and $I_{H_{3}}=\Lambda_{2}$. For $i \in I_{G_{3}}$, compared with the index sets $\Lambda_{3}$ in {\rm (\ref{eqLambda3})} and $\Lambda_{u}$ in {\rm (\ref{eqa4})}, we get $I_{G_{3}}=\Lambda_{3} \cup \Lambda_{u}.$ For $i \in I_{GH_{3}}$, compared with the index set $\Lambda_{1}$ in {\rm (\ref{eq6a})}, we get $\xi_{i}=0$ and $I_{GH_{3}}=\Lambda_{1}$.
\item [{\rm (d)}] In Part 4, for $i \in I_{H_{4}}$, compared with the index sets $\Lambda_{1}$ in {\rm (\ref{eq6a})}, $\Lambda_{2}$ in {\rm(\ref{eq6b})} and $\Lambda^{+}_{3}$ in {\rm (\ref{eq6c})}, we get $I_{H_{4}}=\Lambda_{1} \cup \Lambda_{2} \cup \Lambda^{+}_{3}.$ For $i \in I_{G_{4}}$, compared with the index set $\Lambda_{u}$ in {\rm (\ref{eqa4})}, we get $(Bw-\mathbf{1}+\xi)_{i}=0$ and $I_{G_{4}}=\Lambda_{u}.$ For $i \in I_{GH_{4}}$, compared with the index set $\Lambda^{c}_{3}$ in {\rm (\ref{eq6d})}, it results that we have $(Bw-\mathbf{1}+\xi)_{i}=0$ and $I_{GH_{4}}=\Lambda^{c}_{3}$.
\end{itemize}
\end{proof}
The results in Proposition \ref{pro4} are demonstrated in Figure \ref{fig_relation}. For example, for (a) in Proposition \ref{pro4}, the index sets of complementarity constraints in Part 1 are shown in Figure \ref{fig_relation} (a), which is about the relationship of $I_{H_{1}},\ I_{G_{1}},\ I_{GH_{1}}$ in (\ref{eq23}) and the index sets (\ref{eqPsi1})--(\ref{eq7d}). In Figure \ref{fig_relation} (a), the red shaded part represents the index set $I_{G_{1}}$, which contains the index sets $\Psi^{+}_{1}$ and $\Psi_{3}$. (b)--(d) in Proposition \ref{pro4} are demonstrated in Figure \ref{fig_relation} (b)--(d). Specially, in Figure \ref{fig_relation} (b),
the red shaded part represents the index set $I_{H_{2}}$, which contains the index sets  {$\Psi_{1}$ (or $\Psi^{0}_{1} \cup \Psi^{+}_{1}$)} and $\Psi_{2}$. In Figure \ref{fig_relation} (c), the red shaded part represents the index set $I_{G_{3}}$, which contains the index sets  {$\Lambda_{3}$ (or $\Lambda^{+}_{3} \cup \Lambda^{c}_{3}$)} and $\Lambda_{u}$. In Figure \ref{fig_relation} (d), the red shaded part represents the index set $I_{H_{4}}$, which contains the index sets $\Lambda_{1},\ \Lambda_{2}$, and $\Lambda^{+}_{3}$.

\subsection{The reduced form of the MPEC-MFCQ}
Here we provide a matrix representation of the union of gradients in {\rm(\ref{eq22})}.
\begin{proposition}\label{pro5}
 The set of gradient vectors in {\rm(\ref{eq22})} at a feasible point $v$ for the MPEC {\rm (\ref{eq16})} can be written in the matrix form
\be \label{eq24}
\Gamma=\left[\begin{array}{ccccc}\mathbf{0}_{(I_{G_{1}},\ L_{1})}& \mathbf{0}_{(I_{G_{1}},\ L_{2})} &\cellcolor{green!50}\Gamma_{a}^{3} & (A B^{\top})_{(I_{G_{1}},\ \cdot \ )} & \mathbf{0}_{(I_{G_{1}},\ L_{5})} \\
\mathbf{0}_{(I_{GH_{1}},\ L_{1})}& \mathbf{0}_{(I_{GH_{1}},\ L_{2})} &\cellcolor{green!50}\Gamma_{b}^{3} & (A B^{\top})_{(I_{GH_{1}},\ \cdot \ ) } & \mathbf{0}_{(I_{GH_{1}},\ L_{5})} \\
\mathbf{0}_{(I_{GH_{1}},\ L_{1})} &\cellcolor{blue!50}\Gamma_{c}^{2} & \mathbf{0}_{(I_{GH_{1}},\ L_{3}) } & \mathbf{0}_{(I_{GH_{1}},\ L_{4})} & \mathbf{0}_{(I_{GH_{1}},\ L_{5})}\\
\mathbf{0}_{(I_{H_{1}},\ L_{1})} &\cellcolor{blue!50}\Gamma_{d}^{2} & \mathbf{0}_{(I_{H_{1}},\ L_{3}) } & \mathbf{0}_{(I_{H_{1}},\ L_{4})} & \mathbf{0}_{(I_{H_{1}},\ L_{5})}\\
\mathbf{0}_{(I_{G_{2}},\ L_{1})} & \cellcolor{blue!50} \Gamma_{e}^{2}& \mathbf{0}_{(I_{G_{2}},\ L_{3}) } & \mathbf{0}_{(I_{G_{2}},\ L_{4})} & \mathbf{0}_{(I_{G_{2}},\ L_{5})}\\
\mathbf{0}_{(I_{H_{2}},\ L_{1})}& \mathbf{0}_{(I_{H_{2}},\ L_{2})} &\cellcolor{green!50}\Gamma_{f}^{3} & \mathbf{0}_{(I_{H_{2}},\ L_{4}) } & \mathbf{0}_{(I_{H_{2}},\ L_{5})} \\
\mathbf{0}_{(I_{G_{3}},\ L_{1})} &\mathbf{0}_{(I_{G_{3}},\ L_{2})} & \mathbf{0}_{(I_{G_{3}},\ L_{3})} & \cellcolor{yellow!50}(B B^{\top})_{(I_{G_{3}},\ \cdot \ )} & \cellcolor{yellow!50}\Gamma_{g}^{5} \\
\mathbf{0}_{(I_{GH_{3}},\ L_{1})} &\mathbf{0}_{(I_{GH_{3}},\ L_{2})} & \mathbf{0}_{(I_{GH_{3}},\ L_{3})} & \cellcolor{yellow!50}(B B^{\top})_{(I_{GH_{3}},\ \cdot \ )} & \cellcolor{yellow!50}\Gamma_{h}^{5} \\
\mathbf{0}_{(I_{GH_{3}},\ L_{1})} &\mathbf{0}_{(I_{GH_{3}},\ L_{2})} & \mathbf{0}_{(I_{GH_{3}},\ L_{3})} & \cellcolor{yellow!50}\Gamma_{i}^{4} & \cellcolor{yellow!50}\mathbf{0}_{(I_{GH_{3}},\ L_{5})}\\
\mathbf{0}_{(I_{H_{3}},\ L_{1})} &\mathbf{0}_{(I_{H_{3}},\ L_{2})} & \mathbf{0}_{(I_{H_{3}},\ L_{3})} & \cellcolor{yellow!50}\Gamma_{j}^{4} & \cellcolor{yellow!50}\mathbf{0}_{(I_{H_{3}},\ L_{5})}\\
\mathbf{1}_{(I_{G_{4}},\ L_{1})} &\mathbf{0}_{(I_{G_{4}},\ L_{2})} & \mathbf{0}_{(I_{G_{4}},\ L_{3})} & \Gamma_{k}^{4} & \mathbf{0}_{(I_{G_{4}},\ L_{5}) }\\
\mathbf{1}_{(I_{GH_{4}},\ L_{1})} &\mathbf{0}_{(I_{GH_{4}},\ L_{2})} & \mathbf{0}_{(I_{GH_{4}},\ L_{3})} & \Gamma_{l}^{4} & \mathbf{0}_{(I_{GH_{4}},\ L_{5}) }\\
\mathbf{0}_{(I_{GH_{4}},\ L_{1})} &\mathbf{0}_{(I_{GH_{4}},\ L_{2})} & \mathbf{0}_{(I_{GH_{4}},\ L_{3})}  & \cellcolor{yellow!50}\mathbf{0}_{(I_{GH_{4}},\ L_{4})}& \cellcolor{yellow!50}\Gamma_{m}^{5}\\
\mathbf{0}_{(I_{H_{4}},\ L_{1})} &\mathbf{0}_{(I_{H_{4}},\ L_{2})} & \mathbf{0}_{(I_{H_{4}},\ L_{3})}  & \cellcolor{yellow!50}\mathbf{0}_{(I_{H_{4}},\ L_{4}) } & \cellcolor{yellow!50}\Gamma_{n}^{5}
\end{array}\right],
\ee
where $L_{q},\ q=1,\ \cdots,\ 5$ are the index sets of columns corresponding to the variables $C$, $\zeta$, $z$, $\alpha$, and $\xi$, respectively, and
\begin{equation}\label{eq25}
\left.
\begin{array}{lll}
\Gamma_{a}^{3} & := &
\left[
\begin{array}{cc}
\mathbf{0}_{(I_{G_{1}},\ \Psi^{0}_{1} \cup \Psi_{2})} & I_{(I_{G_{1}} ,\ \Psi^{+}_{1}\cup \Psi_{3})}
\end{array}
\right]\\
\Gamma_{b}^{3}& := &
\left[\begin{array}{cc} I_{(I_{GH_{1}},\ \Psi^{0}_{1})}& \mathbf{0}_{(I_{GH_{1}},\ \Psi^{+}_{1} \cup \Psi_{2}\cup \Psi_{3})}
\end{array} \right]\\
\Gamma_{c}^{2} & := &\left[\begin{array}{cc} I_{(I_{GH_{1}},\ \Psi^{0}_{1})}& \mathbf{0}_{(I_{GH_{1}},\ \Psi^{+}_{1} \cup \Psi_{2} \cup  \Psi_{3})} \end{array} \right]\\
 \Gamma_{d}^{2}&:=&\left[\begin{array}{cc} \mathbf{0}_{(I_{H_{1}},\ \Psi_{1} \cup \Psi_{3})}&  I_{(I_{H_{1}},\ \Psi_{2})} \end{array} \right]\\
 \Gamma_{e}^{2}& := &\left[\begin{array}{cc} \mathbf{0}_{(I_{G_{2}},\  \Psi_{1} \cup \Psi_{2})}& -I_{(I_{G_{2}},\ \Psi_{3})} \end{array} \right]\\
 \Gamma_{f}^{3} &:=&\left[\begin{array}{cc} I_{(I_{H_{2}},\ \Psi_{1} \cup \Psi_{2})}& \mathbf{0}_{(I_{H_{2}},\ \Psi_{3})} \end{array} \right]\\
\Gamma_{g}^{5}&:=&\left[\begin{array}{cc}\mathbf{0}_{(I_{G_{3}},\ \Lambda_{1} \cup \Lambda_{2})}& I_{(I_{G_{3}},\ \Lambda_{3} \cup \Lambda_{u})} \end{array} \right]\\
\Gamma_{h}^{5}&:=&\left[\begin{array}{cc}I_{(I_{GH_{3}},\  \Lambda_{1})}&\mathbf{0}_{(I_{GH_{3}},\ \Lambda_{2}\cup \Lambda_{3} \cup \Lambda_{u})} \end{array} \right] \\
\Gamma_{i}^{4}&:=&\left[\begin{array}{cc} I_{(I_{GH_{3}},\ \Lambda_{1}) }& \mathbf{0}_{(I_{GH_{3}},\ \Lambda_{2} \cup \Lambda_{3} \cup \Lambda_{u})} \end{array} \right]\\
 \Gamma_{j}^{4}&:=&\left[\begin{array}{cc} \mathbf{0}_{(I_{H_{3}},\  \Lambda_{1} \cup \Lambda_{3} \cup \Lambda_{u})}& I_{(I_{H_{3}},\ \Lambda_{2})} \end{array} \right]\\
 \Gamma_{k}^{4}&:=&\left[\begin{array}{cc} \mathbf{0}_{(I_{G_{4}},\ \Lambda_{1} \cup \Lambda_{2} \cup \Lambda_{3})} &  -I_{(I_{G_{4}},\ \Lambda_{u})} \end{array} \right]\\
 \Gamma_{l}^{4}&:=&\left[\begin{array}{cc} \mathbf{0}_{(I_{GH_{4}},\ \Lambda_{1} \cup \Lambda_{2} \cup \Lambda^{+}_{3} \cup \Lambda_{u})} & -I_{(I_{GH_{4}},\  \Lambda^{c}_{3})} \end{array} \right]\\
\Gamma_{m}^{5}&:=&\left[\begin{array}{cc} \mathbf{0}_{(I_{GH_{4}},\ \Lambda_{1} \cup \Lambda_{2} \cup \Lambda^{+}_{3} \cup \Lambda_{u})} & I_{(I_{GH_{4}},\ \Lambda^{c}_{3})} \end{array} \right]\\
 \Gamma_{n}^{5}&:=&\left[\begin{array}{cc} I_{(I_{H_{4}},\ \Lambda_{1} \cup \Lambda_{2} \cup \Lambda^{+}_{3})} & \mathbf{0}_{(I_{H_{4}},\  \Lambda^{c}_{3} \cup \Lambda_{u}) }  \end{array} \right]
\end{array}\right\}.
\end{equation}
\end{proposition}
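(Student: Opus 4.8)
\textbf{Plan of proof for Proposition \ref{pro5}.}
The goal is purely computational: we must write out the gradients $\nabla G_i(v)$ and $\nabla H_i(v)$ that appear in the MPEC-MFCQ set \eqref{eq22}, collect them as rows of a single matrix, and check that this matrix coincides, up to the ordering of rows and columns, with the displayed matrix $\Gamma$ in \eqref{eq24}--\eqref{eq25}. The plan is to proceed in four stages. First, recall from \eqref{eqe1} that $G(v)=Pv+a$ and $H(v)=Qv$ are affine, so $\nabla G_i(v)=P_i^{\top}$ (the $i$-th row of $P$, written as a column) and $\nabla H_i(v)=Q_i^{\top}$, independently of the feasible point $v$. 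Thus the whole statement reduces to bookkeeping on the fixed matrices $P$ and $Q$ defined just before \eqref{eq16}.

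Second, I would partition the $\overline m$ complementarity pairs into the four blocks (Part 1--Part 4) corresponding to the four systems in \eqref{eq15}, exactly as done preceding the proposition, and recall from Proposition \ref{pro4} the identification of $I_{H_k},I_{G_k},I_{GH_k}$ with the sets $\Psi_j$, $\Lambda_j$, $\Lambda_u$, and the refined pieces $\Psi^0_1,\Psi^+_1,\Lambda^+_3,\Lambda^c_3$ introduced in \eqref{eqdivideset}. For each part $k$, the set $I_{GH_k}$ contributes \emph{two} rows to \eqref{eq22} (one from $\nabla G_i$, one from $\nabla H_i$), while $I_{G_k}$ contributes a single $\nabla G_i$ row and $I_{H_k}$ a single $\nabla H_i$ row; this explains the row structure of $\Gamma$, which has exactly $4$ groups, each with up to four sub-blocks indexed by $I_{G_k}$, $I_{GH_k}$ (twice), $I_{H_k}$. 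I would then go system by system: for Part 1, $G_i(v)=(AB^{\top}\alpha+z)_i$ has gradient with $I_{Tm_1}$ in the $z$-columns and $(AB^{\top})_{(i,\cdot)}$ in the $\alpha$-columns, and $H_i(v)=\zeta_i$ has gradient $e^{Tm_1}_i$ in the $\zeta$-columns; substituting the set identifications from Proposition \ref{pro4}(a) and splitting the identity $I_{(\cdot,\Psi_1)}$ into the $\Psi^0_1$ and $\Psi^+_1$ columns produces precisely the blocks $\Gamma^3_a,\Gamma^3_b$ (from $\nabla G$) and $\Gamma^2_c$ (from $\nabla H$ on $I_{GH_1}$), plus $\Gamma^2_d,\Gamma^2_e$ (from $\nabla H,\nabla G$ on $I_{H_1},I_{G_2}$ wait—). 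For Part 2, $G_i=1-\zeta_i$ gives $-e^{Tm_1}_i$ in the $\zeta$-columns and $H_i=z_i$ gives $e^{Tm_1}_i$ in the $z$-columns; together with Proposition \ref{pro4}(b) (note $I_{GH_2}=\emptyset$, so no doubled rows here) this yields $\Gamma^2_d,\Gamma^2_e,\Gamma^3_f$. For Parts 3 and 4, $G_i=(BB^{\top}\alpha-\mathbf 1+\xi)_i$ and $C\mathbf 1-\alpha$, $H_i=\alpha_i$ and $\xi_i$ respectively, contributing the $(BB^{\top})_{(\cdot,\cdot)}$ blocks, the $\pm I$ blocks in the $\alpha$- and $\xi$-columns, and the $\mathbf 1$ column under $C$ coming from $\partial(C\mathbf 1-\alpha)/\partial C=\mathbf 1$; invoking Proposition \ref{pro4}(c)--(d) and the splitting of $\Lambda_3$ gives $\Gamma^5_g,\Gamma^5_h,\Gamma^4_i,\Gamma^4_j,\Gamma^4_k,\Gamma^4_l,\Gamma^5_m,\Gamma^5_n$.

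Third, I would verify the column structure: the columns of every gradient naturally split according to the variable blocks $C,\zeta,z,\alpha,\xi$, which are the sets $L_1,\dots,L_5$; and within each variable block, the columns are further ordered according to the partition of the corresponding index set ($Q_u=\Psi^0_1\cup\Psi^+_1\cup\Psi_2\cup\Psi_3$ for $\zeta$ and $z$, and $Q_l=\Lambda_1\cup\Lambda_2\cup\Lambda^+_3\cup\Lambda^c_3\cup\Lambda_u$ for $\alpha$ and $\xi$), so that the identity submatrices become the block-structured $I_{(\cdot,\cdot)}$ and $\mathbf 0_{(\cdot,\cdot)}$ pieces displayed in \eqref{eq25}. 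Finally, I would observe that permuting rows and columns does not affect positive-linear independence, so it is legitimate to present the matrix in this reorganized block form; this justifies the word ``can be written'' in the statement. The main obstacle is not conceptual but organizational: one must be scrupulous about which rows of $P$ carry the $AB^{\top}$ versus $BB^{\top}$ blocks, get the signs right on the $-I$ blocks arising from $1-\zeta_i$ and $C\mathbf 1-\alpha_i$, correctly place the single $\mathbf 1$ column in the $C$-block only for the Part-4 constraints $C\mathbf 1-\alpha\ge\mathbf 0$, and—most delicately—track the doubling of rows over $I_{GH_k}$ together with the refinement of $\Psi_1$ and $\Lambda_3$ so that every sub-block $\Gamma^\bullet_\bullet$ lands in the row range and column range claimed in \eqref{eq24}. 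Once this careful index accounting is carried out block by block, the identity $\Gamma = $ (matrix of gradients in \eqref{eq22}) follows by direct inspection.
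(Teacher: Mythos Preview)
Your approach is essentially identical to the paper's: stack the rows $\nabla G(v)_{I_{G_k}\cup I_{GH_k}}$ and $\nabla H(v)_{I_{H_k}\cup I_{GH_k}}$ (which, as you note, are just rows of the fixed matrices $P$ and $Q$ since $G,H$ are affine), and then invoke Proposition~\ref{pro4}(a)--(d) together with the refinements $\Psi_1=\Psi_1^0\cup\Psi_1^+$ and $\Lambda_3=\Lambda_3^+\cup\Lambda_3^c$ to read off each sub-block. One small bookkeeping slip to clean up in execution: $\Gamma_d^2$ is the row block $\nabla H(v)_{I_{H_1}}$ from Part~1 (the $\zeta$-gradient on $I_{H_1}$), not from Part~2 as your narrative suggests after the ``wait---''; only $\Gamma_e^2$ and $\Gamma_f^3$ arise from Part~2.
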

 \begin{proof} Based on Definition \ref{pro1}, we can write the system of gradient vectors in (\ref{eq22}), at a feasible point $v$, in the equivalent matrix form
 $$
\Gamma = \left[\begin{array}{l}
\nabla G(v)_{I_{G_1}} \\ \nabla G(v)_{I_{GH_1}} \\ \nabla H(v)_{I_{GH_1}}  \\ \nabla H(v)_{I_{H_1}}\\
\nabla G(v)_{I_{G_2}} \\ \nabla H(v)_{I_{H_2}} \\ \nabla G(v)_{I_{G_3}}\\ \nabla  G(v)_{I_{GH_3}} \\ \nabla  H(v)_{I_{GH_3}} \\ \nabla H(v)_{I_{H_3}} \\ \nabla G(v)_{I_{G_4}} \\ \nabla  G(v)_{I_{GH_4}} \\ \nabla  H(v)_{I_{GH_4}} \\ \nabla H(v)_{I_{H_4}}
\end{array}\right]
$$
that we can easily show to be equivalent to (\ref{eq24}). To proceed, first note that from Proposition \ref{pro4} (a) and (b), we have
$$
\begin{array}{l}
I_{H_{1}}\!=\! \Psi_{2},\ I_{G_{1}}\!=\! \Psi^{+}_{1}\cup \Psi_{3},\ I_{GH_{1}}\!=\!\Psi^{0}_{1},\ I_{H_{2}}\!=\!\Psi_{1} \cup \Psi_{2},\\[1ex] I_{G_{2}}\!=\!\Psi_{3},\ Q_u\!=\!\Psi_{1} \cup \Psi_{2}\cup \Psi_{3}.
\end{array}
$$
So, we get $\Gamma_{a}^{3},\ \Gamma_{b}^{3},\ \Gamma_{c}^{2},\ \Gamma_{d}^{2},\ \Gamma_{e}^{2}$, and $\Gamma_{f}^{3}$ in (\ref{eq25}). On the other hand, it follows from Proposition \ref{pro4} (c) and (d), we have
$$\begin{array}{l}
I_{H_{3}}\!=\!\Lambda_{2},\ I_{G_{3}}\!=\!\Lambda_{3} \cup \Lambda_{u},\ I_{GH_{3}}\!=\!\Lambda_{1},\ I_{H_{4}}\!=\!\Lambda_{1} \cup \Lambda_{2} \cup \Lambda^{+}_{3},\\[1ex]
 I_{G_{4}}\!=\!\Lambda_{u},\ I_{GH_{4}}\!=\!\Lambda^{c}_{3}, \;\; Q_{l}\!=\!\Lambda_{1} \cup \Lambda_{2} \cup \Lambda_{3} \cup \Lambda_{u}.
\end{array}$$
Subsequently, it follows that $\Gamma_{g}^{5},\ \Gamma_{h}^{5},\ \Gamma_{i}^{4},\ \Gamma_{j}^{4},\ \Gamma_{k}^{4},\ \Gamma_{l}^{4},\ \Gamma_{m}^{5}$, and $\Gamma_{n}^{5}$ in (\ref{eq25}). Therefore, we obtain the form of the matrix $\Gamma$ in (\ref{eq24}).
\end{proof}

\subsection{Three important lemmas}

Due to the complicated form of $\Gamma$ in (\ref{eq24}), in this part, we first present three lemmas, addressing the positive-linear independence of three submatrices in $\Gamma$ marked by blue, green and yellow, respectively. To proceed from here on, we define the size of each index set in (\ref{eq23}) and Propositions \ref{pro7}-\ref{pro9} as follows. We denote the size of the index set $I_{G_1}$ by $S_1$, that is, $ \mid \! I_{G_1}\mid =S_1$. Similarly,
$$
\begin{array}{lllll} \mid \! I_{G_2}\! \mid=S_2,& \mid \! I_{G_3}\! \mid=S_3,&
\mid \! I_{G_4} \! \mid=S_4, &{} \\
\mid \! I_{H_1}\! \mid=U_1, & \mid \! I_{H_2}\! \mid=U_2, & \mid \! I_{H_3}\! \mid=U_3, & \mid \! I_{H_4}\! \mid=U_4,&{}\\
\mid \! I_{GH_1}\! \mid=W_1, & \mid \! I_{GH_3}\! \mid =W_2,& \mid \! I_{GH_4}\! \mid=W_3, & {} &{}\\
\mid \! \Lambda_{1}\! \mid=D_1, & \mid \! \Lambda_{2}\! \mid=D_2,& \mid \! \Lambda^{+}_{3}\! \mid=D_3, & \mid \! \Lambda^{c}_{3}\! \mid=D_4, & \mid \! \Lambda_{u}\! \mid=D_5,\\
\mid \! \Psi^{0}_{1}\! \mid=N_1, & \mid \! \Psi^{+}_{1}\! \mid=N_2, & \mid \! \Psi_{2}\! \mid =N_3, & \mid \! \Psi_{3}\! \mid=N_4.&{}
\end{array}
$$
Further, we denote the index corresponding to each row in the matrices $\Gamma_{a}^{3},\ \cdots\ \Gamma_{n}^{5}$ in (\ref{eq25}) by $a_{s},\ \cdots\ n_{s}$, respectively.
\begin{lemma}\label{lem1}
The row vectors in the following matrix
\be \label{eqb34}
\left[\begin{array}{l}\Gamma_{c}^{2}\\ \Gamma_{d}^{2} \\ \Gamma_{e}^{2}
\end{array}\right]=\left[\begin{array}{cccc}I_{(I_{GH_{1}},\ \Psi^{0}_{1})}&  \mathbf{0}_{(I_{GH_{1}},\ \Psi^{+}_{1})}& \mathbf{0}_{(I_{GH_{1}},\ \Psi_{2})} & \mathbf{0}_{(I_{GH_{1}},\  \Psi_{3})} \\
 \mathbf{0}_{(I_{H_{1}},\ \Psi^{0}_{1})}&  \mathbf{0}_{(I_{H_{1}},\  \Psi^{+}_{1})}&I_{(I_{H_{1}},\  \Psi_{2})} & \mathbf{0}_{(I_{H_{1}},\ \Psi_{3})} \\
 \mathbf{0}_{(I_{G_{2}},\  \Psi^{0}_{1})}& \mathbf{0}_{(I_{G_{2}},\  \Psi^{+}_{1})}& \mathbf{0}_{(I_{G_{2}},\  \Psi_{2})} & -I_{(I_{G_{2}},\ \Psi_{3})} \\
\end{array}\right]
\ee
 are positive-linearly independent.
\end{lemma}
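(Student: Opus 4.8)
The plan is to prove the \emph{stronger} statement that the row vectors of the matrix in (\ref{eqb34}) are linearly independent; positive-linear independence in the sense of Definition~\ref{pro1} then follows at once, since any vanishing combination of these rows with nonnegative coefficients is, in particular, a vanishing linear combination, hence trivial.

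First I would use Proposition~\ref{pro4} (a) and (b) to rewrite the row index sets of (\ref{eqb34}) through the $\Psi$-partition, namely $I_{GH_{1}}=\Psi^{0}_{1}$, $I_{H_{1}}=\Psi_{2}$ and $I_{G_{2}}=\Psi_{3}$, and recall that $\Psi^{0}_{1}$, $\Psi^{+}_{1}$, $\Psi_{2}$, $\Psi_{3}$ are pairwise disjoint with union $Q_{u}$ (the latter following from the disjointness of the $\Psi_{j}$ noted after Proposition~\ref{pro9} together with $\Psi_{1}=\Psi^{0}_{1}\cup\Psi^{+}_{1}$). Consequently the three row blocks of (\ref{eqb34}) are indexed by disjoint sets, the four column blocks genuinely partition the columns, and --- reading directly from (\ref{eqb34}) --- each row block carries its $\pm$ identity in a distinct column block: $\Gamma_{c}^{2}$ has $I_{N_{1}}$ in the $\Psi^{0}_{1}$-columns, $\Gamma_{d}^{2}$ has $I_{N_{3}}$ in the $\Psi_{2}$-columns, and $\Gamma_{e}^{2}$ has $-I_{N_{4}}$ in the $\Psi_{3}$-columns, all remaining entries being zero. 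Deleting the all-zero column block indexed by $\Psi^{+}_{1}$ (which affects neither the rank nor the linear independence of the rows) leaves the square matrix $\mathrm{diag}(I_{N_{1}},\, I_{N_{3}},\, -I_{N_{4}})$, which is nonsingular; hence the rows of (\ref{eqb34}) are linearly independent. Should any of $\Psi^{0}_{1}$, $\Psi_{2}$, $\Psi_{3}$ be empty, the corresponding row block is vacuous and the same argument applies to the remaining blocks (and the claim is trivially true when all three are empty).

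I do not anticipate a genuine obstacle: the entire content is the recognition of this $\pm$-identity block structure, so the only care required is bookkeeping --- matching each row block of (\ref{eqb34}) with the correct identification from Proposition~\ref{pro4} and confirming the disjointness of the four column groups, so that the displayed block-diagonal form is legitimate. By contrast, I would expect the companion results for the other two submatrices of $\Gamma$ in (\ref{eq24}) (those marked green and yellow) to require real work, since there the relevant blocks are the genuinely non-trivial matrices $AB^{\top}$ and $BB^{\top}$ rather than pure identities.
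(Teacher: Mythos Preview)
Your proof is correct and follows essentially the same approach as the paper: both arguments recognize that the three row blocks carry their $\pm$identity sub-blocks in pairwise disjoint column groups, so any vanishing combination forces all coefficients to zero (the paper writes this out as the system $[\overline{\rho}^{c};\,\mathbf{0}_{N_2};\,\overline{\rho}^{d};\,-\overline{\rho}^{e}]=\mathbf{0}$, while you phrase it as a block-diagonal matrix after deleting the $\Psi^{+}_{1}$-columns). Your observation that one in fact gets full linear independence here --- not just positive-linear independence --- is correct and slightly sharper than what the paper states, though the paper's equation~(\ref{eq37}) already implies it without invoking the sign constraints. One small inaccuracy in your closing remark: the green submatrix treated in Lemma~\ref{lem2} consists only of the blocks $\Gamma^{3}_{a},\Gamma^{3}_{b},\Gamma^{3}_{f}$ in the $z$-column $L_{3}$ and does \emph{not} involve $AB^{\top}$; the extra work there comes instead from overlapping identity blocks (e.g.\ both $\Gamma^{3}_{b}$ and part of $\Gamma^{3}_{f}$ live in the $\Psi^{0}_{1}$-columns), which is precisely where nonnegativity is genuinely needed.
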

\begin{proof} Assume that there exist $\overline{\rho}^{c} \in \mathbb{R}^{W_{1}}\ \text{and}\ \overline{\rho}^{c} \geq \mathbf{0},\ \overline{\rho}^{d} \in \mathbb{R}^{U_{1}} \ \text{and}\ \overline{\rho}^{d} \geq \mathbf{0},\ \overline{\rho}^{e} \in \mathbb{R}^{S_{2}} \ \text{and}\ \overline{\rho}^{e} \geq \mathbf{0},$ such that
$$
 \sum \limits_{s=1}^{W_1} \rho_{s}^{c}\left[\begin{array}{c}
 e^{W_{1}}_{c_{s}} \\
 \mathbf{0}_{N_2}\\
 \mathbf{0}_{N_3}\\
 \mathbf{0}_{N_4}
 \end{array}\right]+\sum \limits_{s=1}^{U_1} \rho_{s}^{d}\left[\begin{array}{c}
 \mathbf{0}_{N_{1}}\\
 \mathbf{0}_{N_2}\\
 e^{U_1}_{d_{s}}\\
 \mathbf{0}_{N_{4}}\\
 \end{array}\right]+\sum \limits_{s=1}^{S_2} \rho_{s}^{e}\left[\begin{array}{c}
 \mathbf{0}_{N_{1}}\\
 \mathbf{0}_{N_2}\\
 \mathbf{0}_{N_3}\\
 -e^{S_2}_{e_{s}}
 \end{array}\right]=\mathbf{0}.
$$
 {The above equation} is equivalent to the following system
\be \label{eq37}
 \left[\begin{array}{c}\overline{\rho}^{c} \\  \mathbf{0}_{N_2} \\ \overline{\rho}^{d}\\
-\overline{\rho}^{e}
\end{array}\right]=\mathbf{0}.
\ee
Since $\overline{\rho}^{c} \geq \mathbf{0},\ \overline{\rho}^{d} \geq \mathbf{0},\ \overline{\rho}^{e} \geq \mathbf{0}$, we get $\overline{\rho}^{c} = \mathbf{0},\ \overline{\rho}^{d} = \mathbf{0},\ \overline{\rho}^{e} = \mathbf{0}$ from Equation (\ref{eq37}).
Therefore, the row vectors in the matrix {\rm (\ref{eqb34})} are positive-linearly independent.
\end{proof}
\begin{lemma}\label{lem2}
The row vectors in the following matrix
\be \label{eq38}
\left[\begin{array}{l}\Gamma_{a}^{3}\\ \Gamma_{b}^{3}\\ \Gamma_{f}^{3}
\end{array}\right]=\left[\begin{array}{cccc}\mathbf{0}_{(\Psi^{+}_{1},\ \Psi^{0}_{1}) }&  I_{(\Psi^{+}_{1},\  \Psi^{+}_{1})}& \mathbf{0}_{(\Psi^{+}_{1},\ \Psi_{2})} & \mathbf{0}_{(\Psi^{+}_{1},\ \Psi_{3})} \\
\mathbf{0}_{(\Psi_{3},\ \Psi^{0}_{1})}&  \mathbf{0}_{(\Psi_{3},\ \Psi^{+}_{1})}& \mathbf{0}_{(\Psi_{3},\ \Psi_{2})} & I_{(\Psi_{3},\  \Psi_{3})}\\
I_{(I_{GH_{1}},\ \Psi^{0}_{1}) }& \mathbf{0}_{(I_{GH_{1}},\ \Psi^{+}_{1})}& \mathbf{0}_{(I_{GH_{1}},\  \Psi_{2})} & \mathbf{0}_{(I_{GH_{1}},\ \Psi_{3})}\\
I_{(\Psi^{0}_{1},\ \Psi^{0}_{1})}& \mathbf{0}_{(\Psi^{0}_{1},\ \Psi^{+}_{1})}& \mathbf{0}_{(\Psi^{0}_{1},\ \Psi_{2})} & \mathbf{0}_{(\Psi^{0}_{1},\  \Psi_{3})}\\
\mathbf{0}_{(\Psi^{+}_{1},\ \Psi^{0}_{1}) }& I_{(\Psi^{+}_{1},\  \Psi^{+}_{1})}& \mathbf{0}_{(\Psi^{+}_{1},\  \Psi_{2})} & \mathbf{0}_{(\Psi^{+}_{1},\  \Psi_{3})}\\
\mathbf{0}_{(\Psi_{2},\ \Psi^{0}_{1})}&  \mathbf{0}_{(\Psi_{2},\ \Psi^{+}_{1})}&I_{(\Psi_{2},\ \Psi_{2})} & \mathbf{0}_{(\Psi_{2},\ \Psi_{3})}
\end{array}\right]
\ee
 are positive-linearly independent.
\end{lemma}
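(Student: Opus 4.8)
The plan is to exploit the fact that, unlike the matrix treated in Lemma~\ref{lem1}, the matrix in \eqref{eq38} contains no block carrying a minus sign: each of its blocks is either a zero matrix or a (sub)identity matrix, and within every block-row exactly one of the four column-blocks (indexed by $\Psi^{0}_{1}$, $\Psi^{+}_{1}$, $\Psi_{2}$, $\Psi_{3}$) is a (sub)identity while the other three vanish. Hence every row of \eqref{eq38} is a standard unit row vector of $\mathbb{R}^{N_{1}+N_{2}+N_{3}+N_{4}}$, i.e.\ it has a single nonzero entry, equal to $1$. A nonnegative combination of such vectors can vanish only if all coefficients are zero: fixing a coordinate, the sum of the (nonnegative) coefficients of the rows supported on that coordinate must be zero, so each of them is zero. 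This is the whole of the argument; everything else is the bookkeeping needed to make it explicit.

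Concretely, I would suppose there exist $\overline{\rho}^{a}\geq\mathbf{0}$, $\overline{\rho}^{b}\geq\mathbf{0}$ and $\overline{\rho}^{f}\geq\mathbf{0}$ (of sizes $\mid\! I_{G_{1}}\!\mid$, $\mid\! I_{GH_{1}}\!\mid$ and $\mid\! I_{H_{2}}\!\mid$, respectively) such that the corresponding nonnegative combination of the rows of $\Gamma^{3}_{a}$, $\Gamma^{3}_{b}$ and $\Gamma^{3}_{f}$ equals $\mathbf{0}$, and then read this identity off column-block by column-block. Using Proposition~\ref{pro4}(a)--(b), i.e.\ $I_{G_{1}}=\Psi^{+}_{1}\cup\Psi_{3}$, $I_{GH_{1}}=\Psi^{0}_{1}$ and $I_{H_{2}}=\Psi_{1}\cup\Psi_{2}=\Psi^{0}_{1}\cup\Psi^{+}_{1}\cup\Psi_{2}$, I split $\overline{\rho}^{a}$ into its $\Psi^{+}_{1}$- and $\Psi_{3}$-parts and $\overline{\rho}^{f}$ into its $\Psi^{0}_{1}$-, $\Psi^{+}_{1}$- and $\Psi_{2}$-parts. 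On the $\Psi_{3}$-columns only the $\Psi_{3}$-rows of $\Gamma^{3}_{a}$ are nonzero, forcing that part of $\overline{\rho}^{a}$ to be $\mathbf{0}$; on the $\Psi_{2}$-columns only the $\Psi_{2}$-rows of $\Gamma^{3}_{f}$ are nonzero, forcing that part of $\overline{\rho}^{f}$ to be $\mathbf{0}$; on the $\Psi^{+}_{1}$-columns the $\Psi^{+}_{1}$-rows of $\Gamma^{3}_{a}$ and of $\Gamma^{3}_{f}$ produce a sum of two nonnegative subvectors equal to $\mathbf{0}$, hence both vanish; and on the $\Psi^{0}_{1}$-columns the rows of $\Gamma^{3}_{b}$ and the $\Psi^{0}_{1}$-rows of $\Gamma^{3}_{f}$ likewise produce a sum of two nonnegative subvectors equal to $\mathbf{0}$, so both vanish. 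Thus $\overline{\rho}^{a}=\overline{\rho}^{b}=\overline{\rho}^{f}=\mathbf{0}$, which is the claimed positive-linear independence.

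I do not expect a genuine analytic obstacle: the only thing that needs care is the combinatorial matching of the rows of $\Gamma^{3}_{a}$, $\Gamma^{3}_{b}$, $\Gamma^{3}_{f}$ with the four column-blocks, and this is precisely what Proposition~\ref{pro4} provides. One subtlety worth stating explicitly is that \eqref{eq38} has repeated rows -- the rows of $\Gamma^{3}_{b}$ coincide with the $\Psi^{0}_{1}$-rows of $\Gamma^{3}_{f}$, and the $\Psi^{+}_{1}$-rows of $\Gamma^{3}_{a}$ coincide with the $\Psi^{+}_{1}$-rows of $\Gamma^{3}_{f}$; this is harmless for positive-linear independence exactly because every entry is nonnegative, so two equal rows cannot cancel (the putative relation $r+r=2r$ is nonzero). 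The write-up will parallel that of Lemma~\ref{lem1}, but is in fact shorter because no block here carries a minus sign.
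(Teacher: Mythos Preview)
Your proposal is correct and takes essentially the same approach as the paper: assume a nonnegative combination of the rows vanishes, read the resulting identity off on each of the four column-blocks $\Psi^{0}_{1},\Psi^{+}_{1},\Psi_{2},\Psi_{3}$ (which is exactly the paper's compact system \eqref{eq41}), and use nonnegativity to force every sub-vector to zero. Your added remarks about unit rows and the harmlessness of repeated rows are sound intuition but not needed beyond what the paper already does.
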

 \begin{proof} Assume that there exist $\overline{\rho}^{a} \in \mathbb{R}^{S_{1}}\ \text{and}\ \overline{\rho}^{a} \geq \mathbf{0},\ \overline{\rho}^{b} \in \mathbb{R}^{W_{1}} \ \text{and}\ \overline{\rho}^{b} \geq \mathbf{0},\ \overline{\rho}^{f} \in \mathbb{R}^{U_{2}} \ \text{and}\ \overline{\rho}^{f} \geq \mathbf{0},$ such that
 $$
 \sum \limits_{s=1}^{S_1} \rho_{s}^{a}\left[\begin{array}{c}
 \mathbf{0}_{N_1+N_3}\\
 e^{S_{1}}_{a_{s}}
 \end{array}\right]+\sum \limits_{s=1}^{W_1} \rho_{s}^{b}\left[\begin{array}{c}
 e^{W_{1}}_{b_{s}}\\
 \mathbf{0}_{N_2+N_3+N_{4}}
 \end{array}\right]+\sum \limits_{s=1}^{U_2} \rho_{s}^{f}\left[\begin{array}{c}
 e^{U_{2}}_{f_{s}}\\
 \mathbf{0}_{N_4}
 \end{array}\right]=\mathbf{0}.
$$
 {The above equation} is equivalent to the following system
\be \label{eq41}
 \left[\begin{array}{c}\overline{\rho}^{b}+\overline{\rho}_{\Psi^{0}_1}^{f}\\
 \overline{\rho}_{\Psi^{+}_{1}}^{a}+\overline{\rho}_{\Psi^{+}_{1}}^{f} \\
 \overline{\rho}_{\Psi_2}^{f}\\
 \overline{\rho}_{\Psi_3}^{a}
\end{array}\right]=\mathbf{0}.
\ee
Since $\overline{\rho}^{a} \geq \mathbf{0},\ \overline{\rho}^{b} \geq \mathbf{0},\ \overline{\rho}^{f} \geq \mathbf{0}$, we get $\overline{\rho}^{a} = \mathbf{0},\ \overline{\rho}^{b} = \mathbf{0},\ \overline{\rho}^{f} = \mathbf{0}$ from Equation (\ref{eq41}).
Therefore, the row vectors in the matrix {\rm (\ref{eq38})} are positive-linearly independent.
\end{proof}
\begin{lemma}\label{lem3}
The row vectors in the matrix $\Gamma_{sub}$ defined by
\be
\label{eqb3}
\begin{array}{c}
\Gamma_{sub} =\left[\begin{array}{cc}(B B^{\top})_{(I_{G_{3}},\ \cdot \ )} & \Gamma_{g}^{5}\\ (B B^{\top})_{(I_{GH_{3}},\ \cdot \ )} & \Gamma_{h}^{5}\\ \Gamma_{i}^{4} & \mathbf{0}_{(I_{GH_{3}},\ L_{5} )} \\ \Gamma_{j}^{4} & \mathbf{0}_{(I_{H_{3}},\ L_{5}) } \\
\mathbf{0}_{(I_{GH_{4}},\ L_{4})} & \Gamma_{m}^{5} \\ \mathbf{0}_{(I_{H_{4}},\ L_{4})} & \Gamma_{n}^{5}
\end{array}\right]
\end{array}
\ee
are positive-linearly independent.
\end{lemma}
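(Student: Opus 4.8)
The plan is to follow exactly the template used for Lemmas \ref{lem1} and \ref{lem2}: suppose a nonnegative combination of the rows of $\Gamma_{sub}$ in (\ref{eqb3}) equals the zero vector, split that equation into its $\alpha$-block (columns $L_{4}$) and its $\xi$-block (columns $L_{5}$), and show that every multiplier must vanish. The first concrete step is to substitute the explicit blocks $\Gamma_{g}^{5},\Gamma_{h}^{5},\Gamma_{i}^{4},\Gamma_{j}^{4},\Gamma_{m}^{5},\Gamma_{n}^{5}$ from (\ref{eq25}), together with the identifications $I_{G_{3}}=\Lambda_{3}\cup\Lambda_{u}$, $I_{GH_{3}}=\Lambda_{1}$, $I_{H_{3}}=\Lambda_{2}$, $I_{GH_{4}}=\Lambda^{c}_{3}$, $I_{H_{4}}=\Lambda_{1}\cup\Lambda_{2}\cup\Lambda^{+}_{3}$ from Proposition \ref{pro4}, and $\Lambda_{3}=\Lambda^{+}_{3}\cup\Lambda^{c}_{3}$, $Q_{l}=\Lambda_{1}\cup\Lambda_{2}\cup\Lambda_{3}\cup\Lambda_{u}$.

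After this substitution every row of $\Gamma_{sub}$ falls into one of the six natural groups: the $BB^{\top}$-rows over $i\in\Lambda_{3}\cup\Lambda_{u}$ whose $\xi$-part is the unit vector $e_{i}$; the $BB^{\top}$-rows over $i\in\Lambda_{1}$ whose $\xi$-part is $e_{i}$; the rows over $i\in\Lambda_{1}$ with $\alpha$-part $e_{i}$ and zero $\xi$-part; the rows over $i\in\Lambda_{2}$ with $\alpha$-part $e_{i}$ and zero $\xi$-part; the rows over $i\in\Lambda^{c}_{3}$ with zero $\alpha$-part and $\xi$-part $e_{i}$; and the rows over $i\in\Lambda_{1}\cup\Lambda_{2}\cup\Lambda^{+}_{3}$ with zero $\alpha$-part and $\xi$-part $e_{i}$. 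Attach a nonnegative multiplier to each such row. The decisive step is to read the $\xi$-block of the vanishing combination coordinate by coordinate over $i\in Q_{l}$: only the multipliers whose index set contains $i$ appear there, namely one multiplier for $i\in\Lambda_{u}$ (first group), one for $i\in\Lambda_{2}$ (sixth group), two for $i\in\Lambda_{1}$ (second and sixth groups), two for $i\in\Lambda^{+}_{3}$ (first and sixth groups), and two for $i\in\Lambda^{c}_{3}$ (first and fifth groups). In each case a sum of nonnegative numbers is zero, so each of them is zero; this is the same mechanism that produces (\ref{eq37}) and (\ref{eq41}) in the two preceding lemmas, now applied to the identity blocks of $\Gamma_{g}^{5},\Gamma_{h}^{5},\Gamma_{m}^{5},\Gamma_{n}^{5}$, which between them cover all of $Q_{l}$. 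Consequently the multipliers on all the $BB^{\top}$-rows and on the fifth and sixth groups vanish, so in the $\alpha$-block the $BB^{\top}$-rows drop out and what is left is $\sum_{i\in\Lambda_{1}}\rho_{i}\,e_{i}+\sum_{i\in\Lambda_{2}}\rho'_{i}\,e_{i}=\mathbf{0}$ with $\rho_{i},\rho'_{i}\geq 0$ the multipliers of the third and fourth groups; hence $\rho_{i}=\rho'_{i}=0$. Thus every multiplier is zero and the rows are positive-linearly independent.

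I do not expect a genuine obstacle here. It is worth stressing that, contrary to what the presence of $BB^{\top}$ might suggest, positive semidefiniteness of $BB^{\top}$ is \emph{not} needed: those rows are eliminated first because their $\xi$-parts are unit vectors. The one point requiring care is the bookkeeping of the overlaps of the index sets inside the $\xi$-columns (the double coverage of $\Lambda_{1}$, $\Lambda^{+}_{3}$ and $\Lambda^{c}_{3}$), where one must genuinely invoke nonnegativity of the multipliers rather than mere linear independence of unit vectors.
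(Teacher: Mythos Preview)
Your proposal is correct and follows essentially the same approach as the paper's proof: both set up a nonnegative combination of the rows equal to zero, read off the $\xi$-block ($L_5$) coordinate by coordinate over the partition $\Lambda_1,\Lambda_2,\Lambda_3^+,\Lambda_3^c,\Lambda_u$ of $Q_l$ to kill the multipliers $\overline{\rho}^g,\overline{\rho}^h,\overline{\rho}^m,\overline{\rho}^n$ via nonnegativity, and then use the $\alpha$-block ($L_4$) to finish off $\overline{\rho}^i,\overline{\rho}^j$. Your remark that positive semidefiniteness of $BB^\top$ plays no role is apt and worth keeping.
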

\begin{proof}
 For the convenience of analysis, note that
\[
\left[\begin{array}{c}
\Gamma_{g}^{5} \\ \Gamma_{h}^{5} \\ \Gamma_{m}^{5} \\ \Gamma_{n}^{5}
\end{array} \right]=\left[\begin{array}{ccccc}
\mathbf{0}_{(\Lambda^{+}_{3},\ \Lambda_{1})} & \mathbf{0}_{(\Lambda^{+}_{3},\  \Lambda_{2})} & I_{(\Lambda^{+}_{3},\ \Lambda^{+}_{3})} & \mathbf{0}_{(\Lambda^{+}_{3},\ \Lambda^{c}_{3})} & \mathbf{0}_{(\Lambda^{+}_{3},\ \Lambda_{u})}\\
\mathbf{0}_{(\Lambda^{c}_{3},\ \Lambda_{1})} & \mathbf{0}_{(\Lambda^{c}_{3},\  \Lambda_{2})} & \mathbf{0}_{(\Lambda^{c}_{3},\ \Lambda^{+}_{3})} & I_{(\Lambda^{c}_{3},\ \Lambda^{c}_{3})} & \mathbf{0}_{(\Lambda^{c}_{3},\ \Lambda_{u})}\\
\mathbf{0}_{(\Lambda_{u},\ \Lambda_{1})} & \mathbf{0}_{(\Lambda_{u},\ \Lambda_{2})} & \mathbf{0}_{(\Lambda_{u},\ \Lambda^{+}_{3})} & \mathbf{0}_{(\Lambda_{u},\ \Lambda^{c}_{3})} & I_{(\Lambda_{u},\ \Lambda_{u})}\\
I_{(I_{GH_{3}},\ \Lambda_{1})} & \mathbf{0}_{(I_{GH_{3}},\ \Lambda_{2})} & \mathbf{0}_{(I_{GH_{3}},\ \Lambda^{+}_{3})} & \mathbf{0}_{(I_{GH_{3}},\ \Lambda^{c}_{3})} & \mathbf{0}_{(I_{GH_{3}},\  \Lambda_{u})}\\
\mathbf{0}_{(I_{GH_{4}},\ \Lambda_{1})} & \mathbf{0}_{(I_{GH_{4}},\ \Lambda_{2})} & \mathbf{0}_{(I_{GH_{4}},\ \Lambda^{+}_{3})} & I_{(I_{GH_{4}},\ \Lambda^{c}_{3})} & \mathbf{0}_{(I_{GH_{4}},\ \Lambda_{u})}\\
I_{(\Lambda_{1},\ \Lambda_{1})} & \mathbf{0}_{(\Lambda_{1},\ \Lambda_{2})} & \mathbf{0}_{(\Lambda_{1},\ \Lambda^{+}_{3})} & \mathbf{0}_{(\Lambda_{1},\ \Lambda^{c}_{3})} & \mathbf{0}_{(\Lambda_{1},\ \Lambda_{u})}\\
\mathbf{0}_{(\Lambda_{2},\ \Lambda_{1})} & I_{(\Lambda_{2},\ \Lambda_{2})} & \mathbf{0}_{(\Lambda_{2},\ \Lambda^{+}_{3})} & \mathbf{0}_{(\Lambda_{2},\ \Lambda^{c}_{3})} & \mathbf{0}_{(\Lambda_{2},\ \Lambda_{u})}\\
\mathbf{0}_{(\Lambda^{+}_{3},\ \Lambda_{1})} & \mathbf{0}_{(\Lambda^{+}_{3},\ \Lambda_{2})} & I_{(\Lambda^{+}_{3},\ \Lambda^{+}_{3})} & \mathbf{0}_{(\Lambda^{+}_{3},\ \Lambda^{c}_{3})} & \mathbf{0}_{(\Lambda^{+}_{3},\  \Lambda_{u})}\\
 \end{array} \right],
\]
and assume that we can find some vectors $\overline{\rho}^{g} \in \mathbb{R}^{S_{3}}$ and $\overline{\rho}^{g} \geq \mathbf{0}$, $\overline{\rho}^{h} \in \mathbb{R}^{W_{2}}$ and $\overline{\rho}^{h} \geq \mathbf{0}$, $\overline{\rho}^{i} \in \mathbb{R}^{W_{2}}$ and $\overline{\rho}^{i} \geq \mathbf{0}$, $\overline{\rho}^{j} \in \mathbb{R}^{U_{3}}$ and $\overline{\rho}^{j} \geq \mathbf{0}$, $\overline{\rho}^{m} \in \mathbb{R}^{W_{3}}$ and $\overline{\rho}^{m} \geq \mathbf{0}$, and $\overline{\rho}^{n} \in \mathbb{R}^{U_{4}}$ and $\overline{\rho}^{n} \geq \mathbf{0}$, such that 
\begin{small}
$$
\begin{array}{l}
 \sum \limits_{s=1}^{S_3} \rho_{s}^{g}\left[\begin{array}{c} \left(BB^{\top}\right)_{(g_{s},\ \cdot \ )}^{\top}\\ \left[\begin{array}{c}\mathbf{0}_{D_{1}+D_{2}}\\ e^{S_3}_{g_{s}} \end{array}\right]
 \end{array}\right]\!+\!\sum \limits_{s=1}^{W_2} \rho_{s}^{h}\left[\begin{array}{c}\left(BB^{\top}\right)_{(h_{s},\ \cdot \ )}^{\top} \\ \left[\begin{array}{c} e^{W_{2}}_{h_{s}}\\
 \mathbf{0}_{T m_{2} -D_{1}} \end{array}\right]\end{array}\right]
 \!+\!\sum \limits_{s=1}^{W_2} \rho_{s}^{i}
 \left[\begin{array}{c}
 \left[\begin{array}{c}
 e^{W_2}_{i_{s}} \\
 \mathbf{0}_{T m_{2} -D_{1}} \end{array}\right] \\ \mathbf{0}_{T m_{2}}\end{array}\right]\!+\!\\
 \sum \limits_{s=1}^{U_3} \rho_{s}^{j}\left[\begin{array}{c}
 \left[\begin{array}{c}
 \mathbf{0}_{D_{1}} \\ e^{U_3}_{j_{s}}\\ \mathbf{0}_{D_{3}+D_{4}+D_{5}}\end{array}\right] \\ \mathbf{0}_{T m_{2}}\end{array}\right]\!+\!\sum \limits_{s=1}^{W_3} \rho_{s}^{m}\left[\begin{array}{c} \mathbf{0}_{T m_{2}}\\ \left[\begin{array}{c} \mathbf{0}_{(D_{1}+D_{2}+D_{3})} \\ e^{W_3}_{m_{s}}\\
 \mathbf{0}_{D_{5}} \\ \end{array}\right]
 \end{array}\right]\!+\!\sum \limits_{s=1}^{U_4} \rho_{s}^{n}\left[\begin{array}{c} \mathbf{0}_{T m_{2}}\\ \left[\begin{array}{c} e^{U_{4}}_{n_{s}}\\
 \mathbf{0}_{D_{4}+D_{5}} \\ \end{array}\right]
 \end{array}\right]\!= \!\mathbf{0}.
 \end{array}
$$
\end{small}
 {The above equation} is equivalent to the compact system
\[
\left[\begin{array}{c}
\sum \limits_{s=1}^{S_3}\rho_{s}^{g}\left(\left(BB^{\top}\right)_{(g_{s},\ \cdot)}\right)^{\top}+\sum \limits_{s=1}^{W_2} \left(\left(BB^{\top}\right)_{(h_{s},\ \cdot)}\right)^{\top}+\left[\begin{array}{c} \overline{\rho}^{i}\\ \overline{\rho}^{j}\\ \mathbf{0}_{D_3+D_4+D_5 }
\end{array}\right] \\
\overline{\rho}^{h}+\overline{\rho}_{\Lambda_1}^{n} \\
\overline{\rho}_{\Lambda_2}^{n}\\
\overline{\rho}_{\Lambda^{+}_{3}}^{g}+\overline{\rho}_{\Lambda^{+}_{3}}^{n} \\
\overline{\rho}_{\Lambda^{c}_{3}}^{g}+\overline{\rho}^{m}\\ \overline{\rho}_{\Lambda_{u}}^{g}
\end{array}\right]=\mathbf{0},
\]
which leads to $\overline{\rho}^{g} = \mathbf{0},\ \overline{\rho}^{h} = \mathbf{0},\ \overline{\rho}^{i} = \mathbf{0},\ \overline{\rho}^{j} = \mathbf{0},\ \overline{\rho}^{m} = \mathbf{0},\ \overline{\rho}^{n} = \mathbf{0}$, given that $\overline{\rho}^{g} \geq \mathbf{0},\ \overline{\rho}^{h} \geq \mathbf{0},\ \overline{\rho}^{i} \geq \mathbf{0},\ \overline{\rho}^{j} \geq \mathbf{0},\ \overline{\rho}^{m} \geq \mathbf{0},\ \overline{\rho}^{n} \geq \mathbf{0}$.
Therefore, the row vectors in the matrix $\Gamma_{sub}$ are positively-linearly independent.
\end{proof}
\subsection{The main result}
Based on the above lemmas, we are ready to present the main theorem on the MPEC-MFCQ.
\begin{theorem} \label{thr3} Let $v=(C,\zeta,z,\alpha,\xi)$ be any feasible point for the MPEC {\rm (\ref{eq16})}, then  $v$  satisfies the MPEC-MFCQ.
\end{theorem}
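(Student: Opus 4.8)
The plan is to combine Proposition \ref{pro5} with Lemmas \ref{lem1}--\ref{lem3} by exploiting the block-column structure of the matrix $\Gamma$ in (\ref{eq24}). By Definition \ref{pro1} and Proposition \ref{pro5}, verifying the MPEC-MFCQ at the feasible point $v$ amounts to showing that the rows of $\Gamma$ are positive-linearly independent. So I would start by assuming the contrary: suppose there are nonnegative multiplier vectors, one attached to each of the fourteen row blocks of $\Gamma$ --- call them $\overline{\rho}^{a},\ldots,\overline{\rho}^{n}$ in line with the notation of the lemmas --- whose associated nonnegative combination of the rows of $\Gamma$ equals the zero vector. The goal is to deduce that every one of them vanishes.

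Next I would read off this vector equation column block by column block, using the labels $L_{1},\ldots,L_{5}$ for the columns associated with $C$, $\zeta$, $z$, $\alpha$, $\xi$. The single $L_{1}$-column (populated only by $\mathbf{1}_{(I_{G_{4}},\,L_{1})}$ and $\mathbf{1}_{(I_{GH_{4}},\,L_{1})}$) gives the scalar equation $\mathbf{1}^{\top}\overline{\rho}^{k}+\mathbf{1}^{\top}\overline{\rho}^{l}=0$, which, by nonnegativity, forces $\overline{\rho}^{k}=\overline{\rho}^{l}=\mathbf{0}$. The $L_{2}$-columns receive contributions only from the blocks $\Gamma_{c}^{2},\Gamma_{d}^{2},\Gamma_{e}^{2}$, so the corresponding sub-equation is precisely a nonnegative combination of the rows of the matrix (\ref{eqb34}) set to zero; Lemma \ref{lem1} then yields $\overline{\rho}^{c}=\overline{\rho}^{d}=\overline{\rho}^{e}=\mathbf{0}$. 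Likewise the $L_{3}$-columns receive contributions only from $\Gamma_{a}^{3},\Gamma_{b}^{3},\Gamma_{f}^{3}$, and Lemma \ref{lem2} applied to (\ref{eq38}) gives $\overline{\rho}^{a}=\overline{\rho}^{b}=\overline{\rho}^{f}=\mathbf{0}$.

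Finally, once $\overline{\rho}^{a},\overline{\rho}^{b}$ are known to vanish (killing the $(AB^{\top})$ blocks in the first two row blocks of $\Gamma$) and $\overline{\rho}^{k},\overline{\rho}^{l}$ as well, the only rows still contributing to the $L_{4}$- and $L_{5}$-columns are exactly the six row blocks that make up $\Gamma_{sub}$ in (\ref{eqb3}). Hence the remaining part of the equation is a nonnegative combination of the rows of $\Gamma_{sub}$ set to zero, and Lemma \ref{lem3} forces $\overline{\rho}^{g}=\overline{\rho}^{h}=\overline{\rho}^{i}=\overline{\rho}^{j}=\overline{\rho}^{m}=\overline{\rho}^{n}=\mathbf{0}$. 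With all multipliers equal to zero, the rows of $\Gamma$ are positive-linearly independent, so by Proposition \ref{pro5} the MPEC-MFCQ holds at $v$; as $v$ was an arbitrary feasible point, the claim follows.

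As for where the difficulty lies, the genuinely delicate work has in fact been front-loaded into Proposition \ref{pro5} (pinning down the exact sparsity pattern of $\Gamma$ through the index-set identities of Proposition \ref{pro4}) and into Lemmas \ref{lem1}--\ref{lem3}; granted those, the theorem itself is an assembly argument. The one point that needs care in that assembly is the order of elimination: the $C$-column entries live in the row blocks indexed by $I_{G_{4}}$ and $I_{GH_{4}}$, which are deliberately \emph{not} part of $\Gamma_{sub}$, so those two multiplier groups must be eliminated (via the trivial $L_{1}$ equation) before Lemma \ref{lem3} can be invoked on the $\alpha,\xi$ columns. A secondary thing to watch is that several rows of $\Gamma$ coincide (for instance copies of identity rows on $\Psi_{1}^{0}$ or on $\Lambda_{1}$), but positive-linear independence is precisely the notion that tolerates this, since a nonnegative combination of repeated nonzero rows can vanish only when each coefficient does.
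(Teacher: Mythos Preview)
Your proposal is correct and follows essentially the same approach as the paper: set up a nonnegative combination of the rows of $\Gamma$, use the $L_{1}$-column to kill $\overline{\rho}^{k},\overline{\rho}^{l}$, apply Lemmas \ref{lem1}--\ref{lem2} on the $L_{2}$- and $L_{3}$-columns to kill $\overline{\rho}^{c},\overline{\rho}^{d},\overline{\rho}^{e}$ and $\overline{\rho}^{a},\overline{\rho}^{b},\overline{\rho}^{f}$, and then invoke Lemma \ref{lem3} on the remaining $L_{4},L_{5}$-columns. Your explicit remark that $\overline{\rho}^{a},\overline{\rho}^{b},\overline{\rho}^{k},\overline{\rho}^{l}$ must be eliminated before Lemma \ref{lem3} becomes applicable is exactly the point the paper leaves implicit.
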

\begin{proof}
Assume there exist $\overline{\rho}^{a} \in \mathbb{R}^{S_{1}}\ \text{and}\ \overline{\rho}^{a} \geq \mathbf{0},\ \overline{\rho}^{b} \in \mathbb{R}^{W_{1}}\ \text{and}\ \overline{\rho}^{b} \geq \mathbf{0},\ \overline{\rho}^{c} \in \mathbb{R}^{W_{1}}\ \text{and}\ \overline{\rho}^{c} \geq \mathbf{0},\ \overline{\rho}^{d} \in \mathbb{R}^{U_{1}}\ \text{and}\ \overline{\rho}^{d} \geq \mathbf{0},\ \overline{\rho}^{e} \in \mathbb{R}^{S_{2}}\ \text{and}\ \overline{\rho}^{e} \geq \mathbf{0},\ \overline{\rho}^{f} \in \mathbb{R}^{U_{2}}\ \text{and}\ \overline{\rho}^{f} \geq \mathbf{0},\ \overline{\rho}^{g} \in \mathbb{R}^{S_{3}}\ \text{and}\ \overline{\rho}^{g} \geq \mathbf{0},\ \overline{\rho}^{h} \in \mathbb{R}^{W_{2}} \ \text{and}\ \rho^{h} \geq \mathbf{0},\ \overline{\rho}^{i} \in \mathbb{R}^{W_{2}} \ \text{and}\ \overline{\rho}^{i} \geq \mathbf{0},\ \overline{\rho}^{j} \in \mathbb{R}^{U_{3}}\ \text{and}\ \overline{\rho}^{j} \geq \mathbf{0},\ \overline{\rho}^{k} \in \mathbb{R}^{S_{4}}\ \text{and}\ \overline{\rho}^{k} \geq \mathbf{0},\ \overline{\rho}^{l} \in \mathbb{R}^{W_{3}}\ \text{and}\ \overline{\rho}^{l} \geq \mathbf{0},\ \overline{\rho}^{m} \in \mathbb{R}^{W_{3}} \ \text{and}\ \overline{\rho}^{m} \geq \mathbf{0},\ \overline{\rho}^{n} \in \mathbb{R}^{U_{4}} \ \text{and}\ \overline{\rho}^{n} \geq \mathbf{0},$ such that the following holds
$$
\sum \limits_{s=1}^{S_1}\rho_{s}^{a}\left[\begin{array}{c}
0 \\
\mathbf{0}_{T m_{1}} \\ \left(\Gamma_{a}^{3}\right)_{(a_{s},\ \cdot \ )}^{\top} \\ \left(AB^{\top}\right)_{(a_{s},\ \cdot \ )}^{\top}\\ \mathbf{0}_{T m_{2}}
\end{array}\right]+
\sum \limits_{s=1}^{W_1}\rho_{s}^{b}\left[\begin{array}{c}
0 \\
\mathbf{0}_{T m_{1}} \\ \left(\Gamma_{b}^{3}\right)_{(b_{s},\ \cdot \ )}^{\top} \\ \left(AB^{\top}\right)_{(b_{s},\ \cdot \ )}^{\top} \\ \mathbf{0}_{T m_{2}}
\end{array}\right]
+\sum \limits_{s=1}^{W_1}\rho_{s}^{c}\left[\begin{array}{c}
0 \\ \left(\Gamma_{c}^{2}\right)_{(c_{s},\ \cdot \ )}^{\top} \\ \mathbf{0}_{T m_{1}} \\ \mathbf{0}_{T m_{2}} \\ \mathbf{0}_{T m_{2}}
\end{array}\right]
$$
\be \label{eq43}
\begin{array}{l}
+\sum \limits_{s=1}^{U_1}\rho_{s}^{d}\left[\begin{array}{c}
0 \\ \left(\Gamma_{d}^{2}\right)_{(d_{s},\ \cdot \ )}^{\top} \\ \mathbf{0}_{T m_{1} } \\ \mathbf{0}_{T m_{2} } \\ \mathbf{0}_{T m_{2} }
\end{array}\right]+\sum \limits_{s=1}^{S_2}\rho_{s}^{e}\left[\begin{array}{c}
0 \\ \left(\Gamma_{e}^{2}\right)_{(e_{s},\ \cdot \ )}^{\top} \\ \mathbf{0}_{T m_{1}} \\ \mathbf{0}_{T m_{2} } \\ \mathbf{0}_{T m_{2} }
\end{array}\right]
+\sum \limits_{s=1}^{U_2}\rho_{s}^{f}\left[\begin{array}{c}
0 \\
\mathbf{0}_{T m_{1} } \\ \left(\Gamma_{f}^{3}\right)_{(f_{s},\ \cdot \ )}^{\top} \\ \mathbf{0}_{T m_{2} } \\ \mathbf{0}_{T m_{2}}
\end{array}\right]\\
+\sum \limits_{s=1}^{S_3}\rho_{s}^{g}\left[\begin{array}{c}
0\\
\mathbf{0}_{T m_{1}} \\ \mathbf{0}_{T m_{1}} \\ \left(BB^{\top}\right)_{(g_{s},\ \cdot \ )}^{\top} \\ \left(\Gamma_{g}^{5}\right)_{(g_{s},\ \cdot \ )}^{\top}
\end{array}\right]+
\sum \limits_{s=1}^{W_2}\rho_{s}^{h}\left[\begin{array}{c}
0 \\
\mathbf{0}_{T m_{1} } \\ \mathbf{0}_{T m_{1} } \\ \left(BB^{\top}\right)_{(h_{s},\ \cdot \ )}^{\top} \\ \left(\Gamma_{h}^{5}\right)_{(h_{s},\ \cdot \ )}^{\top}
\end{array}\right] +
\sum \limits_{s=1}^{W_2}\rho_{s}^{i}\left[\begin{array}{c}
0 \\
\mathbf{0}_{T m_{1} } \\ \mathbf{0}_{T m_{1}} \\ \left(\Gamma_{i}^{4}\right)_{(i_{s},\ \cdot \ )}^{\top} \\ \mathbf{0}_{T m_{2}}
\end{array}\right]\\
+\sum \limits_{s=1}^{U_3}\rho_{s}^{j}\left[\begin{array}{c}
0 \\
\mathbf{0}_{T m_{1} } \\ \mathbf{0}_{T m_{1} } \\ \left(\Gamma_{j}^{4}\right)_{(j_{s},\ \cdot \ )}^{\top} \\ \mathbf{0}_{T m_{2} }
\end{array}\right] +\sum \limits_{s=1}^{S_4}\rho_{s}^{k}\left[\begin{array}{c}
1 \\
\mathbf{0}_{T m_{1}} \\ \mathbf{0}_{T m_{1}} \\ \left(\Gamma_{k}^{4}\right)_{(k_{s},\ \cdot \ )}^{\top} \\ \mathbf{0}_{T m_{2}}
\end{array}\right]
+\sum \limits_{s=1}^{W_3}\rho_{s}^{l}\left[\begin{array}{c}
1 \\
\mathbf{0}_{T m_{1} } \\ \mathbf{0}_{T m_{1}} \\ \left(\Gamma_{l}^{4}\right)_{(l_{s},\ \cdot \ )}^{\top} \\ \mathbf{0}_{T m_{2}}
\end{array}\right]\\
+\sum \limits_{s=1}^{W_3}\rho_{s}^{m}\left[\begin{array}{c}
0 \\
\mathbf{0}_{T m_{1}} \\ \mathbf{0}_{T m_{1}} \\ \mathbf{0}_{T m_{2} } \\ \left(\Gamma_{m}^{5}\right)_{(m_{s},\ \cdot \ )}^{\top}
\end{array}\right]
+\sum \limits_{s=1}^{U_4}\rho_{s}^{n}\left[\begin{array}{c}
0 \\
\mathbf{0}_{T m_{1} } \\ \mathbf{0}_{T m_{1}} \\ \mathbf{0}_{T m_{2} } \\ \left(\Gamma_{n}^{5}\right)_{(n_{s},\ \cdot \ )}^{\top}
\end{array}\right]=\mathbf{0}.
\end{array}
\ee
From the first row in Equation (\ref{eq43}), we get $\sum \limits_{s=1}^{S_4} \rho_{s}^{k}+\sum \limits_{s=1}^{W_3} \rho_{s}^{l}=0$. Together with the fact that $\overline{\rho}^{k} \geq \mathbf{0},\ \overline{\rho}^{l}\geq \mathbf{0}$, we get $\overline{\rho}^{k}=\mathbf{0}$ and $\overline{\rho}^{l}=\mathbf{0}$. From Lemma \ref{lem1}, we get $\overline{\rho}^{c} = \mathbf{0},\ \overline{\rho}^{d} = \mathbf{0},\ \overline{\rho}^{e} = \mathbf{0}$ in Equation (\ref{eq43}). From Lemma \ref{lem2}, we get $\overline{\rho}^{a} = \mathbf{0},\ \overline{\rho}^{b} = \mathbf{0},\ \overline{\rho}^{f} = \mathbf{0}$ in Equation (\ref{eq43}). From Lemma \ref{lem3}, we get $\overline{\rho}^{g} = \mathbf{0},\ \overline{\rho}^{h} = \mathbf{0},\ \overline{\rho}^{i} = \mathbf{0},\ \overline{\rho}^{j} = \mathbf{0},\ \overline{\rho}^{m} = \mathbf{0},\ \overline{\rho}^{n} = \mathbf{0}$ in Equation (\ref{eq43}).

In summary, the row vectors in the matrix $\Gamma$ (\ref{eq24}) are positive-linearly independent at every feasible point $v$ for the MPEC (\ref{eq16}). That is to say, every feasible point $v$ for the MPEC  (\ref{eq16}) satisfies the MPEC-MFCQ.
\end{proof}
\section{Numerical results}\label{sec5}
In this section, we { {present the GR-CV, which is a concrete implementation of the GRM in Algorithm \ref{alo2} for selecting the hyperparameter $C$ in SVC, as shown in Algorithm \ref{alo1}. We show
numerical results of the proposed GR-CV, and compare it with other approaches.}}
\begin{algorithm}
\caption{The Global Relaxation Cross-Validation Algorithm (GR-CV)}\label{alo1}
\begin{algorithmic}[1]
\State Given $T$, split the data set into a subset $\Omega$ with $l_{1}$ points and a hold-out test set $\Theta$ with $l_{2}$ points. The set $\Omega$ is equally partitioned into $T$ pairwise disjoint subsets, one for each fold. 
\State {\bf Select} an optimal hyperparameter $\widehat{C}$ by the GRM in Algorithm \ref{alo2}.
\State {\bf Post-processing procedure.} The regularization  {hyperparameter} $\widehat{C}$ is rescaled by a factor $\frac{T}{T-1}$.
Then, an $l_{1}$-loss SVC problem is solved on the subset $\Omega$ using $\frac{T}{T-1} \widehat{C}$ by ALM-SNCG algorithm in \cite{Yan2020efficient}. This gives the final classifier $\widehat{w}$.
\end{algorithmic}
\end{algorithm}

All the numerical tests are conducted in Matlab R2018a on a Windows 7 Dell Laptop with an Intel(R) Core(TM) i5-6500U CPU at 3.20GHz and 8 GB of RAM. All the data sets are collected from the LIBSVM library: \href{csie.ntu.edu.tw/cjlin/
libsvmtools/datasets}{https://www.csie.ntu.edu.tw/cjlin/
libsvmtools/datasets/}. Each data set is split
into a subset $\Omega$ with $l_{1}$ points (it is used for cross-validation) and a hold-out test set $\Theta$ with $l_{2}$ points. The data descriptions are shown in Table 1.
\begin{table}[!htbp]
\centering
\caption{Descriptions of data sets.}\label{table1}
\begin{tabular}{cccccccc}
\toprule
Data set & $l_{1}$  & $l_{2}$ & n & Data set & $l_{1}$  & $l_{2}$ & n\\
\midrule
heart & 189 & 81 & 13  & splice & 300 & 700 & 60 \\
breast & 240 & 172 & 10 & fourclass & 300 &562 & 2  \\
colon-cancer & 36 &26 &2000 & w1a & 240 &260 & 300 \\
ionosphere & 246 & 105 & 34 & w2a & 300 & 500 & 300\\
australian & 270 & 420 & 14 & a1a & 300 & 200 & 119  \\
diabetes & 270 & 498 & 8 & german.number &207 &793 & 24 \\
\bottomrule
\end{tabular}
\end{table}

We compare { {our GR-CV}} with two other approaches: the inexact cross-validation method (In-CV) and the grid search algorithm (G-S). In-CV \cite{kunapuli2008classification} is a relaxation method based on the relaxation of the complementarity constraints by a prescribed tolerance parameter $\mathbf{tol} > 0$. That is, solving (\ref{eq50}) with $t_{k}=\mathbf{tol}$ as a fixed tolerance rather than decreasing $t_{k}$ gradually.

The parameters of three methods are set as follows. For GR-CV, we set the initial values as $v_{0}=\left[1,\ \mathbf{0}_{1 \times \overline{m}}\right]^{\top}$, $t_{0}=1,\ t_{\min}=10^{-8},\ \sigma=0.01.$ The relaxed subproblems (\ref{eq50}) are solved by the \texttt{snsolve} function, which is part of the SNOPT solver \cite{philip2002user}.
 For In-CV, we use the same $v_{0}$ as in GR-CV and $\mathbf{tol}=10^{-4}$. For G-S, a typical grid range for $C$ would be $C \in \{10^{-4},\ 10^{-3},\ 10^{-2}$, $10^{-1},\ 1,\ 10^{1},\ 10^{2},\ 10^{3},\ 10^{4}\}$ \cite{kunapuli2008classification}. In each training process, the ALM-SNCG algorithm from \cite{Yan2020efficient}, which is outstanding and competitive with the most popular methods in LIBLINEAR (\href{csie.ntu.edu.tw/~cjlin/liblinear}{https://www.csie.ntu.edu.tw/~cjlin/liblinear/}) in both speed and accuracy, is used to solve the $l_{1}$-loss SVC problem.

We compare the aforementioned methods in the following three aspects:
 \begin{itemize}
 \item [1.] Test error ($E_{t}$) as defined by
$$
E_{t}=\frac{1}{l_{2}} \sum_{(x, y) \in \Theta} \frac{1}{2} \mid \operatorname{sign}\left(\widehat{w}^{\top} x\right)-y \mid,
$$
which is a measure of the ability of generalization.
 \item [2.] CV error ($E_{C}$) as defined in the objective function of problem (\ref{eq31}).
 \item [3.] The number of iterations $k$ for an algorithm, and the total number of iterations $it$ for solving the subproblems  (short for $(k,\ it$)).
 \end{itemize}
We also report the maximum violation of all constraints defined as in (\ref{eqc1}), to measure the feasibility of the final solution given by GR-CV and In-CV.

The results are reported in Table \ref{table2}, where we mark the winners of test error $E_{t}$, CV error $E_{C}$ and the maximum violation of all constraints Vio in bold. We also show the comparisons { {of the three}} methods for different data sets on test error $E_{t}$ and CV error $E_{C}$ in Figures \ref{fig_test} and \ref{fig_CV}, respectively. The data sets on the horizontal axis are arranged in the order shown in Table \ref{table2}.

From Figure \ref{fig_test}, Figure \ref{fig_CV} and Table \ref{table2}, we have the following observations. Firstly, GR-CV performs the best in terms of test error, implying that our approach is more capable of generalization. Secondly, in terms of test error in Figure \ref{fig_test}, GR-CV is competitive with G-S. GR-CV is the winner in five data sets of all the twelve datasets where as G-S wins in eight datasets among the twelve datasets. Finally, comparing GR-CV with In-CV, the feasibility of the solution returned by GR-CV is significantly better than that by In-CV since Vio given by GR-CV is much smaller than that by In-CV. In terms of cpu time, it is obvious that In-CV takes less time than GR-CV since it only solves the relaxation problem (\ref{eq50}) once. Since G-S is basically solving a completely different type of problem to find the hyperparameter $C$, it is does not make sense to compare the cpu time between GR-CV and G-S.
\begin{table}[htbp]
\centering
\caption{Computational results for $T=3$.} \label{table2}
\begin{tabular}{ccccccc}
\toprule
{}&Data set & Method  & $E_{t}$ ($\%$) &$E_{C}$ ($\%$)& Vio& $(k,\ it)$ \\
\midrule
1 & heart  & GR-CV & \textbf{9.88} & \textbf{17.46}  &\textbf{1.51e$-$6} &(5, 24165)\\
{}&{}& In-CV &\textbf{9.88} & 17.95& 0.010 &(1,12418) \\
{}&{}& G-S &13.58 &\textbf{17.46} & $-$ & (27,425) \\
2 &breast  & GR-CV & \textbf{4.07}  &\textbf{5.42}& \textbf{4.98e$-$4}& (5,17092)  \\
{}&{}& In-CV & \textbf{4.07} & \textbf{5.42}& 0.006& (1,14971)\\
{}&{}& G-S &\textbf{4.07} & 6.25 & $-$ & (27,298) \\
3 & colon-cancer & GR-CV& \textbf{19.23}&\textbf{2.78}& \textbf{9.69e$-$5}& (5,2166) \\
{}&{}& In-CV &23.08&\textbf{2.78} & 0.005& (1,1102)\\
{}&{}& G-S & 26.92 & \textbf{2.78} & $-$ & (27,167)\\
4 & ionosphere & GR-CV&\textbf{0.95} & 27.61 &\textbf{0.03}&(5,96200) \\
{}&{}& In-CV & 1.90 &29.76& \textbf{0.03} &(1,29530)  \\
{}&{}& G-S &4.76 & \textbf{18.70} & $-$ & (27,522) \\
5 & australian & GR-CV&\textbf{14.29} & \textbf{14.44} &\textbf{3.03e$-$6} & (5,32583) \\
{}&{}& In-CV & 14.52 &14.81 &0.008 & (1,26703)\\
{}&{} & G-S & 14.52 & \textbf{14.44} & $-$ & (27,430) \\
6 & diabetes & GR-CV& \textbf{20.48} & \textbf{24.44} &\textbf{1.75e$-$5} &(5,33294) \\
{}&{}& In-CV & \textbf{20.48} & 25.18 & 0.005& (1,26558)\\
{}&{} & G-S & \textbf{20.48} & 25.19 &$-$& (27,416) \\
7 &splice & GR-CV&\textbf{23.29} & 29.01 &0.009 & (5,83306)  \\
{}&{}& In-CV & 26.29 &24.63&\textbf{0.005} &(1,24333) \\
{}&{}& G-S & \textbf{23.29} &\textbf{23.33} & $-$& (27,526) \\
8 & fourclass & GR-CV & \textbf{22.06} & 28.67 & \textbf{5.83e$-$5} & (5,17275)  \\
{}&{} &In-CV & 22.24 & 28.65 & 0.008 & (1,8989) \\
{}&{} &G-S & 22.24 & \textbf{23.33} & $-$ & (27,349) \\
9 & w1a & GR-CV & \textbf{0.00} & 23.33 & \textbf{4.26e$-$4}& (5,75793) \\
{}&{} & In-CV & \textbf{0.00} & \textbf{22.88} & 0.009 & (1,28810) \\
{}&{} &G-S & \textbf{0.00} & 30.00 & $-$ & (27,366)  \\
10 & w2a & GR-CV & \textbf{0.00} & 25.93 & \textbf{1.50e$-$4} & (5,88758)\\
{}&{} & In-CV & \textbf{0.00} & \textbf{22.11} & 0.009 &(1,31708)\\
{}&{} & G-S & \textbf{0.00} &35.67 & $-$ & (27,522)\\
11 & a1a & GR-CV & \textbf{19.50} & 15.33 & \textbf{7.64e$-$5} & (5,64349) \\
{}&{} & In-CV &\textbf{19.50} & 15.65 & 0.013& (1,36010) \\
{}&{}& G-S & 20.00 & \textbf{14.67} & $-$ & (27,533)  \\
12 &german. & GR-CV & \textbf{25.73} & 26.09 & \textbf{5.29e$-$5} & (5,33317) \\
{}&number &In-CV & 26.86 & 26.08 &0.068 & (1,24850)  \\
{}&{} &G-S &26.86 &  \textbf{25.60} & $-$ & (27,482)  \\
\bottomrule
\end{tabular}
\end{table}
\begin{figure}[htbp]
	\centering
	\includegraphics[width=0.80\textwidth]{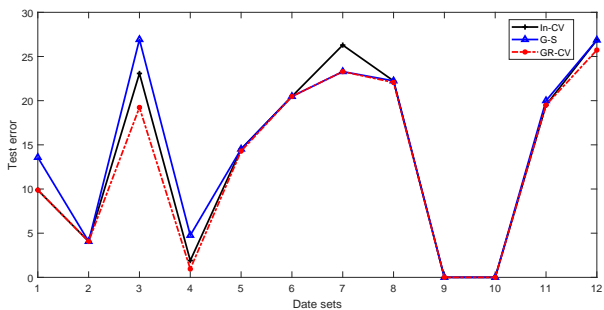}
	\caption{The comparison among the three methods on test error.}\label{fig_test}
\end{figure}
\begin{figure}[htbp]
	\centering
	\includegraphics[width=0.80\textwidth]{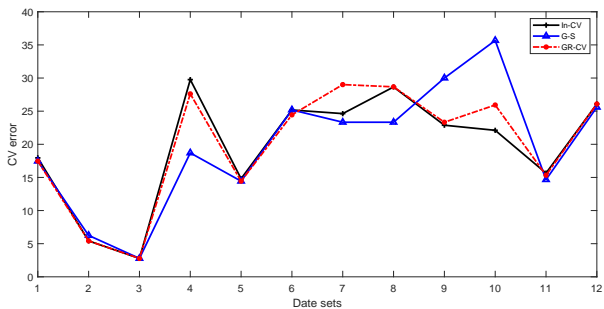}
	\caption{The comparison among the three methods on CV error.}\label{fig_CV}
\end{figure}
\begin{figure}[h]
	\centering
	\includegraphics[width=0.9\textwidth]{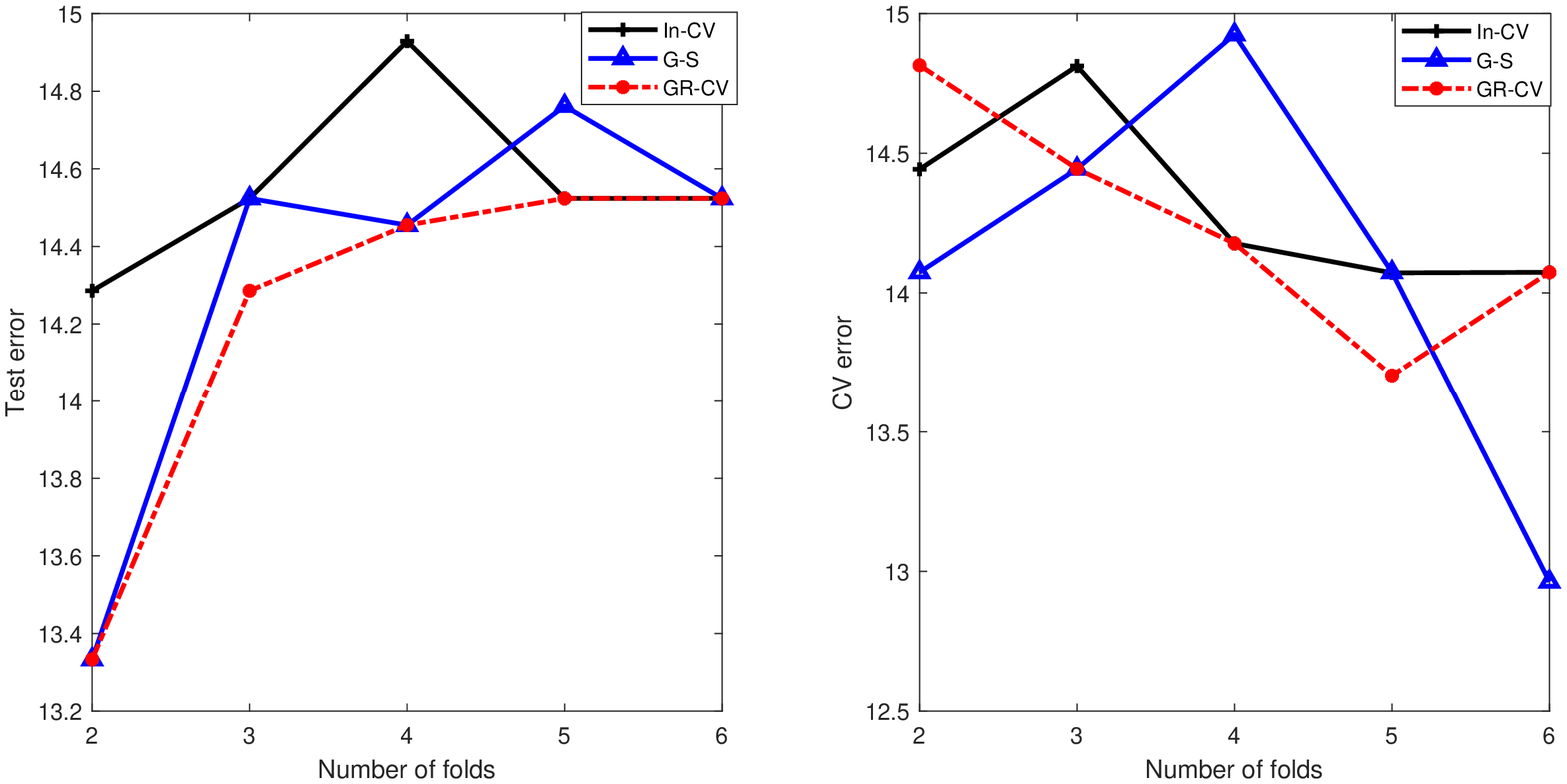}
	\caption{Effect of increasing the number of folds on test error and CV error.}\label{figCVtest}
\end{figure}

To further study the effect of increasing the number of folds on test error $E_{t}$ and CV error $E_{C}$ in the three methods, we report the results on the Australian data set in Figure \ref{figCVtest}. The results show that as $T$ changes, the test error for GR-CV is always the lowest, and the CV error for GR-CV is competitive with the other two methods. Meanwhile it is clear that larger number of folds can be successfully solved for GR-CV, the computing time grows with the number of folds because of the increasing number of variables and constraints for the MPEC to be solved. The ranges of the test error and CV error for different numbers of folds are not large, so $T=3$ represents a reasonable choice.

\section{Conclusion}\label{sec6}
We have proposed a bilevel optimization model for the hyperparameter selection for support vector classification in which the upper-level problem minimizes a T-fold cross validation error and the lower-level problems are T $l_{1}$-loss SVC problems on the training sets. We reformulated  { {the bilevel optimization problem}} into an MPEC, and  {proposed the GR-CV to solve it based on the GRM from \cite{scholtes2001convergence}.} We also proved that the MPEC-MFCQ automatically holds at each feasible point. Extensive numerical results on the data sets from the LIBSVM library demonstrated the superior generalization performance of  {the proposed method} over almost all the data sets used in this paper.

\section*{Acknowledgments.}
The work of AZ is supported by the EPSRC grant EP/V049038/1 and the Alan
Turing Institute under the EPSRC grant EP/N510129/1.

\bibliographystyle{plain-short}
{\footnotesize{
\nocite{*}

}
}
\end{document}